\documentclass[11pt,reqno]{amsart}
\usepackage{amsmath,amssymb,latexsym,esint,cite,mathrsfs}
\usepackage{verbatim,wasysym}
\usepackage[left=1.8cm,right=1.8cm,top=2.4cm,bottom=2.4cm]{geometry}
\usepackage{tikz,enumitem,graphicx, subfig, microtype, color}
\usepackage{epic,eepic}

\usepackage{tikz}
\usetikzlibrary{calc}

\usepackage[colorlinks=true,urlcolor=blue, citecolor=red,linkcolor=blue,
linktocpage,pdfpagelabels, bookmarksnumbered,bookmarksopen]{hyperref}
\usepackage[hyperpageref]{backref}
\usepackage[english]{babel}
\usepackage{appendix}

\numberwithin{equation}{section}

\newtheorem{thm}{Theorem}[section]
\newtheorem{lem}[thm]{Lemma}

\newtheorem{Prop}[thm]{Proposition}

\newtheorem{Rem}[thm]{Remark}

\begin{document}
	\title[Nonlinear elliptic problems with the fractional Laplacian]
	{Asymptotic behavior of solutions for the nonlinear Hartree equation involving the fractional Laplacian}

	\author[N. Borgia]{Natalino Borgia}
\address{\noindent Natalino Borgia \newline
		Dipartimento di Matematica, Universit\`{a} degli Studi di Bari Aldo Moro,\newline
		Via Orabona 4, 70125 Bari, Italy.}\email{natalino.borgia@uniba.it}

	\author[S. Cingolani]{Silvia Cingolani$^\dag$}
	\address{\noindent Silvia Cingolani  \newline
		Dipartimento di Matematica, Universit\`{a} degli Studi di Bari Aldo Moro,\newline
		Via Orabona 4, 70125 Bari, Italy.}\email{silvia.cingolani@uniba.it}
	
	\author[M. Yang]{Minbo Yang$^\ddag$}
	\address{\noindent Minbo Yang  \newline
		School of Mathematical Sciences, Zhejiang Normal University,\newline
		Jinhua 321004, Zhejiang, People's Republic of China.}\email{mbyang@zjnu.edu.cn}
	
	\author[S. Zhao]{Shunneng Zhao$^\S$}
	\address{\noindent Shunneng Zhao  \newline
		Dipartimento di Matematica, Universit\`{a} degli Studi di Bari Aldo Moro,\newline Via Orabona 4, 70125 Bari, Italy.
		\vspace{2mm}
		\newline
School of Mathematical Sciences, Zhejiang Normal University,\newline
		Jinhua 321004, Zhejiang, People's Republic of China.}
	\email{snzhao@zjnu.edu.cn}

\thanks{22 May 2026}

	\thanks{2020 {\em{Mathematics Subject Classification.}} Primary 35J08; 35B40;  Secondly 35J60, 46E35.}
	
	\thanks{{\em{Key words and phrases.}} A uniform $L^{\infty}$ bound; Asymptotic
behavior of solutions; blow-up; Fractional Laplacian.}

\thanks{Natalino Borgia is supported is supported by INdAM-GNAMPA project ''Metodi variazionali e topologici tra Fisica, Geometria e Scienze Applicate'' (CUP E53C25002010001).}

\thanks{$^\dag$Silvia Cingolani is supported by PNRR MUR project PE0000023 NQSTI - National Quantum Science and Technology Institute (CUP H93C22000670006) and partially supported by INdAM-GNAMPA.}

	\thanks{$^\ddag$Minbo Yang was supported by National Natural Science Foundation of China (12471114) and the Natural Science Foundation of Zhejiang Province (LZ26A010002).}

	\thanks{$^\S$Shunneng Zhao was supported by PNRR MUR project PE0000023 NQSTI - National Quantum Science and Technology Institute (CUP H93C22000670006) and National Natural Science Foundation of China (12401146) and Natural Science Foundation of Zhejiang Province (LMS25A010007).}
	
	\allowdisplaybreaks
	
	\begin{abstract}
		{\small
			In this paper, we investigate the fractional Laplacian problem with slightly subcritical exponents
\begin{equation*}
\left\lbrace
\begin{aligned}
    &A_{s} u=(|x|^{-(n-2s)}\ast u^{2_{s}^{\sharp}-1-\epsilon})u^{2_{s}^{\sharp}-2-\epsilon} \quad\quad\hspace{3.5mm} \mbox{in}\hspace{2mm}\Omega,\\
    &u>0\quad\quad \quad\quad\quad\quad\quad\quad\quad\quad\quad\quad\quad\quad\quad\quad\hspace{2mm}\mbox{in}\hspace{2mm}\Omega,\\
    &u=0\quad \quad\quad\quad\quad\quad\quad\quad\quad\quad\quad\quad\quad\quad\quad\quad\hspace{2mm}\mbox{on}\hspace{2mm}\partial\Omega,
   \end{aligned}
\right.
\end{equation*}
where $\Omega$ is a smooth bounded domain in $\mathbb{R}^n$, $0<s<1$, $n\in(2s,\min\{6s,n+2s\})$, $\epsilon>0$ small, $2_{s}^{\sharp}-1=(n+2s)/(n-2s)$ and $A_{s}$ stands for the spectral fractional Laplacian.
For a general domain $\Omega$ or domains with convexity, we first prove a uniform $L^1$ bound away from the boundary and a uniform $L^{\infty}$ bound near the boundary for positive solutions to the general fractional Hartree-type PDEs  by applying the moving planes method and integral estimates for the convolution term.
Among these results, we study the asymptotic behavior of solutions as $\epsilon\rightarrow0$.
These solutions are shown to blow-up at exactly one point $x_0$ and location of this point is characterized. In addition, the shape and exact rates for blowing-up are studied.
Finally, we also establish the corresponding main results for solutions of the fractional Brezis-Nirenberg problem involving critical Hartree-type nonlinearity.}
	\end{abstract}
	
	\vspace{3mm}
	
	\maketitle
	\section{Introduction}
	\subsection{Motivation and main results}

This paper is devoted to the study of the nonlinear nolocal problem involving slightly subcritical exponents
\begin{equation}\label{prondgr}
\left\lbrace
\begin{aligned}
    &A_{s} u=(|x|^{-(n-2s)}\ast u^{2_{s}^{\sharp}-1-\epsilon})u^{2_{s}^{\sharp}-2-\epsilon} \quad\quad\hspace{3.9mm} \mbox{in}\hspace{2mm}\Omega,\\
    &u>0\quad\quad \quad\quad\quad\quad\quad\quad\quad\quad\quad\quad\quad\quad\quad\quad\hspace{0.8mm}\mbox{in}\hspace{2mm}\Omega,\\
    &u=0\quad \quad\quad\quad\quad\quad\quad\quad\quad\quad\quad\quad\quad\quad\quad\quad\hspace{0.8mm}\mbox{on}\hspace{2mm}\partial\Omega,
   \end{aligned}
\right.
\end{equation}
where $\Omega$ is a smooth bounded domain in $\mathbb{R}^n$, $n\in(2s,\min\{6s,n+2s\})$, $s\in(0,1)$, $\ast$ denotes the standard convolution,
$\epsilon>0$ is a small parameter, $2_{s}^{\sharp}=\frac{2n}{n-2s}$ and $A_{s}$ stands for the spectral fractional Laplace operator $(-\Delta)^{s}$ in $\Omega$ with  outside zero Dirichlet boundary condition.
The fractional Laplacian $(-\Delta)^s$ with $0<s<1$ is the infinitesimal generator of symmetric L\'{e}vy stable diffusion processes on
$\mathbb{R}^n$. It appears in models of anomalous diffusion, flame propagation, chemical reactions, population dynamics, geophysical fluid dynamics, and financial option pricing \cite{Applebaum}.
To define $A_{s}$, let $(\phi_k,\lambda_k)$ be the eigenfunctions and eigenvectors of $(-\Delta)$ in $\Omega$ with zero Dirichlet boundary data. Then $(\phi_k,\lambda_k^s)$ are the eigenfunctions and eigenvectors
of $(-\Delta)^{s}$, also with Dirichlet boundary conditions. In fact, the fractional Laplacian $(-\Delta)^s$ is well defined in the space of functions
$$
H_0^{s}(\Omega)=\Big\{u=\sum a_k\phi_k\in L^{2}(\Omega)| \sum a_k^2\lambda_k^s<\infty\Big\},
$$
and, as a consequence,
\begin{equation}\label{awomiga}
A_{s}u=\sum a_k\lambda_k^{s}\phi_k\in H_0^{-s}(\Omega),
\end{equation}
where $H_0^{-s}(\Omega)$ is the dual space of $H_0^{s}(\Omega)$.

For the classical case $s=1$, the Lane-Emden equation
\begin{equation}\label{eq1.1-1}
A_{s}u=u^{2_{s}^{\sharp}-1-\epsilon},\hspace{2mm}u>0\hspace{2mm}\mbox{in}\hspace{2mm}\Omega\hspace{2mm}\mbox{and}\hspace{2mm}u=0\hspace{2mm}\mbox{on}\hspace{2mm}\partial\Omega.
\end{equation}
was extensively studied regarding the existence and asymptotic behavior of its blow-up solutions when $\epsilon\geq0$.
The studies on the asymptotic behavior of radial solutions of \eqref{eq1.1-1} with zero Dirichlet boundary values  were initiated by Atkinson and Peletier in \cite{ATKINSON-1986} by using ODE technique in the unit ball of $\mathbb{R}^3$.
Later, Brezis and Peletier \cite{BP} used the method of PDE to obtain the same results as that in \cite{ATKINSON-1986} for the spherical domains.
Finally, the same kind of results hold for nonspherical domain, which was settled by Han in \cite{HANZCHAO} (independently by Rey in \cite{Rey-1989}).
Moreover, this result was extended in \cite{Musso-Pistoia-2002}, where Musso and Pistoia obtained the existence of multi-peak solutions for certain domains.
Besides, according to Bahri-Li-Rey \cite{B-L-R} and Rey \cite{Rey-1999}, bubbling solutions are studied for $n>4$, concentrating
around nondegenerate critical points of certain objects which involve the Green's and and Robin's function of $\Omega$. See also \cite{LiWZ,Rey-1990,JW0,HL}.
For the asymptotic behavior of solutions of Lane-Emden system, we refer to \cite{Guerra} and \cite{CKL-1}. For the bubble solutions of nonlinear elliptic PDE with the slightly subcritical or supercritical, we refer to \cite{delPino-0, delPino-1, delPino-2, delPino-3, Musso-Pistoia-2010} and references therein.

Coming back to the subcritical problem \eqref{eq1.1-1} with zero Dirichlet boundary conditions, Choi, Kim and Lee \cite{CKL} (cf. \cite{CKL-2} for the elliptic system) developed a nonlocal analog of the results by Han \cite{HANZCHAO}
and Rey \cite{Rey-1989} above mentioned, namely if $u_{\epsilon}$ is a solution of \eqref{eq1.1-1} with zero Dirichlet boundary condition such that
$$
\lim\limits_{\epsilon\rightarrow0}\frac{\int_{\Omega}|A_{s}^{1/2}u_{\epsilon}|^2dx}{(\int_{\Omega}|u_{\epsilon}|^{2_{s}^{\sharp}}dx)^{2/2_{s}^{\sharp}}}=S_{n,s},
$$
exact rates of blow-up were given and the location of blow-up points were characterized.
There $S=S_{n,s}>0$ is a sharp constant of the fractional Sobolev inequality
	\begin{equation}\label{bsic}
		\int_{\mathbb{R}^n}|(-\Delta)^{s/2}u(x)|^2dx\geq S_{n,s}\big(\int_{\mathbb{R}^n}|u(x)|^{2^*_{s}}dx\big)^{\frac{2}{2^*_s}}\quad \mbox{for all}~~u\in \dot{H}^s(\mathbb{R}^n),
	\end{equation}
where the homogeneous Sobolev space $\dot{H}^s(\mathbb{R}^n)$ is defined as the completion of $C^\infty_0(\mathbb{R}^n)$ with respect to the norm	$$\|u\|_{\dot{H}^s(\mathbb{R}^n)}:=\Big(\int_{\mathbb{R}^n}|(-\Delta)^{s/2}u|^2dy\Big)^{1/2}=\Big(\int_{\mathbb{R}^n}|\hat{u}(\xi)|^2|\xi|^{2s}d\xi\Big)^{1/2}.$$
For $s\in(0,\frac{n}{2})$, an equivalent reformulation of the fractional Sobolev inequality \eqref{bsic}, known as the (diagonal) Hardy-Littlewood Sobolev inequality in \cite{H-L-1928,S1963} asserts that:
\begin{equation}\label{hlsi}
		\int_{\mathbb{R}^n}\int_{\mathbb{R}^n}u(x)|x-y|^{-\mu} v(y)dxdy\leq C_{n,r,t,\mu}\|u\|_{L^r(\mathbb{R}^n)}\|v\|_{L^t(\mathbb{R}^n)},
	\end{equation}
where $\mu\in(0,n)$, $1<r,t<\infty$ and $\frac{1}{r}+\frac{1}{t}+\frac{\mu}{n}=2$,
and $C_{n,r,t,\mu}>0$ is a positive constant.
Furthermore Lieb \cite{Lieb83} found the optimal constant and identified that the extremal functions of fractional Sobolev inequality are functions of the form
	\begin{equation}\label{minimizer}
		U[\xi,\lambda](x)=c_{n,s}\Big(\frac{\lambda}{1+\lambda^2|x-\xi|^2}\Big)^{\frac{n-2s}{2}},\hspace{4mm}\lambda\in\mathbb{R}^{+},\hspace{4mm}\xi\in\mathbb{R}^n,
	\end{equation}
	for $x\in\mathbb{R}^n$ and $c_{n,s}:=2^{2s}(\Gamma(\frac{n+2s}{2})/\Gamma(\frac{n-2s}{2}))^{\frac{n-2s}{4s}}$.
	Chen, Li and Ou \cite{Chen-ou}
	classified that the only positive solutions
to the Euler Lagrange equation associated with fractional Sobolev inequality are the bubbles described in \eqref{minimizer}.
In the general diagonal case $t=r=\frac{2n}{2n-\mu}$, Lieb \cite{Lieb83} showed that the extremal functions for the Hardy-Littlewood-Sobolev inequality and determined the optimal constant:
	\begin{equation}\label{defhlsbc}
		C_{n,\mu}=\pi^{\mu/2}\frac{\Gamma((n-\mu)/2)}{\Gamma(n-\mu/2)}\left(\frac{\Gamma(n)}{\Gamma(n/2)}\right)^{1-\frac{\mu}{n}}.
	\end{equation}
	Moreover, the equality holds if and only if
	\begin{equation*}
		u(x)=av(x)=a\big(\frac{1}{1+\lambda^2|x-x_0|^2}\big)^{\frac{2n-\mu}{2}}
	\end{equation*}
	for some $a\in \mathbb{C}$, $\lambda\in \mathbb{R}\backslash\{0\}$ and $x_0\in \mathbb{R}^n$.
According to the fractional Sobolev inequality and
	Hardy-Littlewood-Sobolev inequality, there exists an optimal constant $C_{HLS}>0$ depending only on $n$, $s$ and $\mu$ such that (cf. \cite{L-Y-Z})
	\begin{equation}\label{Prm}
		\int_{\mathbb{R}^n}|(-\Delta)^{\frac{s}{2}} u|^2dx\geq C_{HLS}\left(\int_{\mathbb{R}^n}(|x|^{-\mu} \ast|u|^{2_{\mu,s}^{\ast}})|u|^{2_{\mu,s}^{\ast}}dx\right)^{\frac{1}{2_{\mu,s}^{\ast}}}\hspace{2mm} \mbox{for all}~~u\in \dot{H}^s(\mathbb{R}^n),
	\end{equation}
where $2^{\ast}_{\mu,s}=\frac{2n-\mu}{n-2s}$. It is well-known that the Euler-Lagrange equation associated with \eqref{Prm} is given by
	\begin{equation}\label{ele-1.1-2}
		(-\Delta)^s u=(|x|^{-\mu}\ast |u|^{2_{\mu,s}^{\ast}})|u|^{2_{\mu,s}^{\ast}-2}u\quad \mbox{for all}~~u\in \dot{H}^s(\mathbb{R}^n).
	\end{equation}
	Furthermore, Le \cite{ple} classified all positive solutions of \eqref{ele-1.1-2} are functions of the form
	\begin{equation}\label{defU}	W[\xi,\lambda](x)=\alpha_{n,\mu,s}\big(\frac{\lambda}{1+\lambda^2|x-\xi|^2}\big)^{\frac{n-2s}{2}},\hspace{1mm}\lambda\in\mathbb{R}^{+},\hspace{1mm}\xi\in\mathbb{R}^n,
	\end{equation}
	and the constant $\alpha_{n,\mu,s}$ is given by
\begin{equation}\label{afal}
\alpha_{n,\mu,s}:=\Big(\frac{2^{2s}\Gamma(\frac{n+2s}{2})\Gamma(\frac{2n-\mu}{2})}{\pi^{n/2}\Gamma\big((n-2s)/2\big)\Gamma\big((n-\mu)/2\big)}\Big)^{\frac{n-2s}{2(n+2s-\mu)}}.
\end{equation}
For the case $s=1$ in \eqref{ele-1.1-2}, the authors of \cite{DY19, GHPS19,DAIQIN} independently computed the optimal constant $C_{HLS}>0$ and classified that the extremal functions of \eqref{Prm} are the bubbles $W[\xi,\lambda]$ described in \eqref{defU}.

Note that the fractional elliptic problem involving fractional operators, given by
\begin{equation}\label{gr}
\left\lbrace
\begin{aligned}
    &A_{s} u=f(u) \quad\quad\hspace{3.9mm} \mbox{in}\hspace{2mm}\Omega,\\
    &u=0\quad\quad\quad\quad\quad\hspace{2.5mm}\mbox{on}\hspace{2mm}\partial\Omega,
   \end{aligned}
\right.
\end{equation}
has attracted substantial research attention over the last decade (see, for instance, \cite{Colorado},\cite{CS},\cite{Cabr-1},\cite{LCS},\cite{Davila},\cite{Sugitani}, \cite{TAN-1}, and references therein).
In the case $s=1/2$, positive solutions to the nonlinear problem \eqref{gr} were studied by Cabre and Tan in \cite{CT},
where the authors also established the Gidas-Spruck-type a priori estimates.
Concerning regularity, Cabre and Tan \cite{CT} extended the $s=1$ results of Gidas and Spruck in \cite{Gidas} for problem \eqref{gr} $f(u)=u^{p}$ with $p<2_{s}^{\ast}-1$.
Their proof relies on blow-up analysis combined with nonlinear Liouville-type results in the whole space and half-space,
which are themselves proved via the Kelvin transform and moving planes/spheres methods.
Moreover, the result for $1/2<s<1$ was established by Tan in \cite{TAN}. Later, for more general nonlinearities and $0<s<1$,
a priori estimates for every weak solution were derived by Choi using a different approach based on the Pohozaev identity.
For the regularity of solutions to the Dirichlet problem for the fractional Laplacian, we refer to \cite{CS,Ros-Oton-1,Ros-Oton-2} and references therein.

The first purpose of the present paper is to establish a uniform $L^1$ bound away from the boundary and  a uniform $L^{\infty}$ bound near the boundary for solutions to problem
\begin{equation}\label{ele-100}
		\left\lbrace
\begin{aligned}
    &A_{s} u=(|x|^{-\mu}\ast F(u))f(u) \quad\quad\quad\quad\hspace{2.7mm} \mbox{in}\hspace{2mm}\Omega,\\
    &u>0\quad\quad \quad\quad\quad\quad\quad\quad\quad\quad\quad\quad\hspace{2mm}\quad\mbox{in}\hspace{2mm}\Omega,\\
    &u=0\quad \quad\quad\quad\quad\quad\quad\quad\quad\quad\quad\quad\hspace{2mm}\quad\mbox{on}\hspace{2mm}\partial\Omega,
   \end{aligned}
\right.
\end{equation}
where $n>2s$, $s\in(0,1)$, $F(u)=\int_{0}^{u}f(s)ds$ and $\mu\in(0,\min\{n,4s,\frac{n+2s}{2}\})$.

For each $r>0$, we set
$\mathcal{M}(\Omega,r)=\big\{x\in\Omega:~dist(x,\partial\Omega)\geq r\big\}$
and
$\mathcal{Q}(\Omega,r)=\big\{x\in\Omega:~dist(x,\partial\Omega)< r\big\}$.
We can prove the following result.
\begin{thm}\label{prior-1}
Assume that $\Omega$ is smooth bounded domain.
Let $f:\mathbb{R}_+\rightarrow\mathbb{R}$ be a locally
Lipschitz continuous function. Assume that $f(t)\geq0$ for all $t\geq0$, and
\begin{equation}\label{f1-0}
\liminf\limits_{u\rightarrow\infty}\frac{f(u)}{u}>\lambda_{1}^{s},\hspace{2mm}\mbox{and}\hspace{2mm}\lim\limits_{u\rightarrow\infty}\frac{f(u)}{u^{\delta}}=0,
\end{equation}
where $\lambda_{1}^{s}$ is the first eigenvalue of $A_{s}$ and $\delta=(n+2s-\mu)/(n-2s)$ if $n>2s$,
with one of the following assumptions:
\begin{itemize}
\item[$(f1)$] $\Omega$ is convex.

\item[$(f2)$]
\begin{equation}\label{f1-3}
\lim_{u\rightarrow0}\frac{f(u)}{u^{\sigma}}=0\hspace{2mm}\mbox{for}\hspace{2mm}\sigma\in\big(\max\{\frac{\mu}{n-2s},\frac{2_{s}^{\sharp}}{2},\frac{2s}{n-2s}\},2_{\mu,s}^{\ast}-1\big),
\end{equation}
and
\begin{equation}\label{f1-2}
\mbox{the function}\hspace{2mm}u\mapsto f(u)u^{-\frac{n+2s}{n-2s}}\hspace{2mm}\mbox{is nonincreasing on}\hspace{2mm}(0,\infty).
\end{equation}
\end{itemize}
Then there exist a small number $r>0$ and a constant $C=C(r,\Omega)>0$ such that for any solution $u$ of \eqref{ele-100}, there holds
\begin{equation}\label{boundary-1}
\int_{\mathcal{M}(\Omega,r)}u(x)dx\leq C\hspace{2mm}\mbox{and}\hspace{2mm}\int_{\mathcal{M}(\Omega,r)}\big(\int_{\Omega}\frac{F(u)}{|x-t|^{\mu}}dt\big)f(u)dx\leq C.
\end{equation}
Moreover, there is a constant $C=C(r,\Omega)>0$ such that
\begin{equation}\label{boundary-2}
\sup\limits_{x\in\mathcal{Q}(\Omega,r)}u(x)\leq C.
\end{equation}
\end{thm}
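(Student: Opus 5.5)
The plan follows the de Figueiredo--Lions--Nussbaum scheme, adapted to the spectral fractional Laplacian through the Caffarelli--Silvestre/Cabré--Tan extension. We work with the $s$-harmonic extension $w$ of $u$ to the cylinder $\mathcal{C}=\Omega\times(0,\infty)$. It satisfies $\mathrm{div}(y^{1-2s}\nabla w)=0$, $w=0$ on $\partial_L\mathcal{C}$, and $-\kappa_s\lim_{y\to0}y^{1-2s}\partial_y w=(|x|^{-\mu}\ast F(u))f(u)$ on $\Omega\times\{0\}$. In this form the moving plane method can be run in directions $x$ only, tangential to the boundary of $\Omega$. The steps are in this order: (i) a global weighted $L^1$ bound from testing with $\phi_1$; (ii) monotonicity of $u$ near $\partial\Omega$ by moving planes, which converts (i) into the interior bounds \eqref{boundary-1}; (iii) the $L^\infty$ bound \eqref{boundary-2} near $\partial\Omega$ from monotonicity plus an averaging argument.

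\textbf{Step 1 (testing with $\phi_1$).} Multiply the equation by the first eigenfunction $\phi_1>0$ and use $\int_\Omega A_su\,\phi_1=\lambda_1^s\int_\Omega u\phi_1$. This gives
\[
\lambda_1^s\int_\Omega u\phi_1=\int_\Omega (|x|^{-\mu}\ast F(u))f(u)\phi_1 .
\]
By \eqref{f1-0} there are $\eta>0$ and $C$ with $f(t)\ge(\lambda_1^s+\eta)t-C$. Since $F\ge0$, $F(t)\ge ct^2-C$, and $|x-t|^{-\mu}\ge(\mathrm{diam}\,\Omega)^{-\mu}$, the convolution term is bounded below by a positive multiple of $\int_\Omega F(u)$. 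Two cases follow. If $\int_\Omega F(u)$ is large, the right-hand side dominates $\lambda_1^s\int_\Omega u\phi_1$ by superlinearity, which is a contradiction. Otherwise the convolution is pointwise bounded below by a constant and we compare directly. In both cases we obtain
\[
\int_\Omega u\phi_1\le C,\qquad \int_\Omega(|x|^{-\mu}\ast F(u))f(u)\phi_1\le C .
\]
Because $\phi_1\ge c(r)>0$ on $\mathcal{M}(\Omega,r)$, this already yields \eqref{boundary-1}. The only subtlety is making the lower bound on $\int F(u)$ quantitative, using Jensen applied to $\int u\phi_1$.

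\textbf{Step 2 (moving planes near $\partial\Omega$).} We show there are $r,\gamma>0$, depending only on $\Omega$, such that $u(x-t\nu)$ is nondecreasing in $t\in[0,r]$ for every $x\in\partial\Omega$ and every unit $\nu$ with $|\nu-n(x)|<\gamma$, where $n(x)$ is the outward normal. Under $(f1)$ (convexity), reflect across hyperplanes $T_\lambda$ tangent-parallel to $\partial\Omega$ and set $w_\lambda=u_\lambda-u$ on $\Sigma_\lambda$. The difference of the nonlocal nonlinearity splits as
\[
(|x|^{-\mu}\ast F(u))\big(f(u_\lambda)-f(u)\big)+f(u_\lambda)\Big(\int_{\Sigma_\lambda}\big(|x-t|^{-\mu}-|x-t^\lambda|^{-\mu}\big)\big(F(u_\lambda)-F(u)\big)dt\Big).
\]
The reflected kernel difference is nonnegative, so the second term has a favorable sign wherever $w_\lambda\ge0$. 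The negative part $w_\lambda^-$ is then handled by the narrow-domain/small-measure maximum principle for the extension, tested with $w_\lambda^-$ on $\Sigma_\lambda\times(0,\infty)$. This uses local Lipschitzness of $f$ and the HLS inequality \eqref{hlsi} to bound the cross term by $|\Sigma_\lambda|^{\theta}\|w_\lambda^-\|^2$. The constraint $\mu<\min\{4s,\frac{n+2s}{2}\}$ is exactly what makes the HLS exponents admissible here.

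Under $(f2)$ with a nonconvex $\Omega$, we first apply a fractional Kelvin transform near each boundary point, mapping a neighborhood to a convex-like region. The monotonicity condition \eqref{f1-2} ensures the transformed nonlinearity $|y|^{-(n+2s)}\cdot\ldots$ stays nonincreasing in the radial factor, so the moving plane comparison still has the right sign. The growth condition \eqref{f1-3} at $0$ with the stated range of $\sigma$ keeps the transformed convolution integrable near the image of infinity. We expect this to be the main obstacle: the convolution is not preserved by the Kelvin transform exactly, and the extra weight factors must be shown to be monotone. We would try to write the transformed equation as $(|x|^{-\mu}\ast\tilde F)\tilde f$ with $|x|$-dependent weights and verify the weight monotonicity directly.

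\textbf{Step 3 (uniform bound near the boundary).} By Step 2, for each $x\in\mathcal{Q}(\Omega,r)$ there is a cone $\Gamma_x$ of fixed aperture and measure $\ge c(r,\gamma)$, contained in $\mathcal{M}(\Omega,r/2)$ (after shrinking $r$), on which $u\ge u(x)$. Hence
\[
u(x)\le |\Gamma_x|^{-1}\int_{\mathcal{M}(\Omega,r/2)}u\le C ,
\]
using Step 1 with $r/2$ in place of $r$. This gives \eqref{boundary-2}. The second estimate in \eqref{boundary-1} follows from Step 1 by the same lower bound on $\phi_1$.
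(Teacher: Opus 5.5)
Your architecture matches the paper's. You test with $\phi_1$, run moving planes on the $s$-harmonic extension (directly for convex $\Omega$, after a Kelvin transform at an exterior tangent ball under $(f2)$), and then average over cones. Step 1 is fine. It is the paper's contradiction argument run directly: with $m=(\operatorname{diam}\Omega)^{-\mu}\int_\Omega F(u)$, either $m\ge 1$ and $(m(\lambda_1^s+\eta)-\lambda_1^s)\int_\Omega u\phi_1\le Cm$, or $m<1$ and $\int_\Omega u^2\le C$.

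The genuine gap is Step 2 under $(f2)$. The comparison that lets moving planes run for the transformed Hartree term is the whole content of the non-convex case, and you leave it as ``we would try''. In the paper this is the claim $\mathcal{K}(x,w_\lambda^*)\le\mathcal{K}(x^\lambda,w_\lambda^*)$ on $\partial_{bd}\Sigma_\lambda$, proved in three moves:
\begin{itemize}
\item The local factor is monotone in $|x|$ by \eqref{f1-2}.
\item The convolution is split over $\Sigma_\lambda$, its reflection, and the remainder; the remainder has the right sign because $|x-y'|\ge|x^\lambda-y'|$ there.
\item Everything then reduces to $r\mapsto F(r^{n-2s}w)r^{-2n}$ being nonincreasing, i.e.\ $2nF(u)\ge(n-2s)uf(u)$. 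This follows from \eqref{f1-2}, since $F(u)\ge f(u)u^{-p}\int_0^u t^p\,dt$ with $p=\frac{n+2s}{n-2s}$.
\end{itemize}
Only after this does your narrow-region estimate (Lipschitz bound, HLS, trace Sobolev) close.

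Be warned that if you do the transform exactly, as you propose, \eqref{f1-2} will not suffice. Since $|T(x)-T(y')|=|x-y'|/(|x||y'|)$, the boundary condition is
\[
\partial_\nu^s w^*=|x|^{\mu-n-2s}f(|x|^{n-2s}w^*)\int_{T(\Omega)}\frac{F(|y'|^{n-2s}w^*(y'))}{|x-y'|^{\mu}|y'|^{2n-\mu}}\,dy'.
\]
This form is what makes the critical Hartree problem Kelvin-invariant, and the paper itself uses it in \eqref{Kelvin} for $\mu=n-2s$. The display in Case~1, however, drops the factor $|x|^{\mu}|y'|^{\mu}$.

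With the correct weights, the argument needs $u\mapsto f(u)u^{-(n+2s-\mu)/(n-2s)}$ to be nonincreasing. That condition also gives $(2n-\mu)F\ge(n-2s)uf$ for the convolution weight. Hypothesis \eqref{f1-2} does not imply it. Take $f=u^q$ near $0$ with $\frac{n+2s-\mu}{n-2s}<q\le\frac{n+2s}{n-2s}$, which is compatible with \eqref{f1-2}. Then $r^{\mu-n-2s}f(r^{n-2s}w)$ increases in $r$. This produces a wrong-sign term of order $|x|-|x^\lambda|$ that is not controlled by $v_\lambda^-$, and it occurs exactly near $x_0$, where $w^*$ is small. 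So the $(f2)$ case needs this stronger hypothesis, or a different argument. The stronger hypothesis does hold in the paper's application $f(u)=u^{2_s^\sharp-2-\epsilon}$, $\mu=n-2s$.

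Smaller points:
\begin{itemize}
\item In the Kelvin case you only get monotonicity of $w^*$ along lines in the inverted picture. So Step 3 gives $u\ge c\,u(x)$ on a set of uniform measure (the paper's weaker monotonicity), not $u\ge u(x)$. This is still enough for the averaging.
\item Your splitting in the convex case is an inequality, not an identity. The part of $\Omega$ outside $\Sigma_\lambda$ and its reflection contributes a nonnegative term, the paper's $A_3$.
\item For a fixed bounded solution, HLS with exponent $\frac{2n}{2n-\mu}$ plus H\"older on $\Sigma_\lambda$ works for every $\mu<n$. So $\mu<\frac{n+2s}{2}$ is not what makes it admissible; the paper needs that restriction only for its Stein--Weiss step.
\item A uniform cone of admissible directions in the convex case really requires strict convexity, which is what the paper's Case~2 assumes.
\end{itemize}
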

\begin{Rem}\label{Rem1-2}
A similar results for the equation with $A_{s}$ operator in a smooth bounded domain $\Omega$ with outside zero Dirichlet boundary condition can be obtained for the nonlinearity $f(u)=u^{2_{s}^{\sharp}-2-\epsilon}$ under the assumption \eqref{minimi} (see below) when $n>2s$
and for the nonlinearity $(|x|^{-\mu}\ast F(u))f(u)=(|x|^{-\mu}\ast u^{2_{\mu,s}^{\ast}})u^{2_{\mu,s}^{\ast}-1}+\epsilon u$ under the assumption \eqref{1-26} (see below) when $n>4s$.
  \end{Rem}
The proof of theorem \ref{prior-1} follows as a combination of the Kelvin transform, the moving planes method, maximum principle and some Hartree-type integral estimates.

Since $A_{s}$ is a nonlocal operator, we consider an $s$-harmonic extension $w\doteq E_{s}(u)$ to the cylinder $\mathcal{C}$ as the solution to the following problem
\begin{equation*}
\left\lbrace
\begin{aligned}
&-\mbox{div}(y^{1-2s}\nabla w)=0\hspace{4.14mm} \mbox{in}\hspace{2mm} \mathcal{C},\\
&w=0\hspace{12mm}\hspace{8mm}\hspace{9mm} \mbox{on}\hspace{2mm}\partial_{L}\mathcal{C},\\
&w=u\hspace{10mm}\hspace{8.5mm}\hspace{10mm} \mbox{on}\hspace{2mm}\Omega\times\{y=0\},
\end{aligned}
		\right.
\end{equation*}
where the half-cylinder standing on a bounded smooth domain $\Omega$ in $\mathbb{R}^n$ by
$$\mathcal{C}= \Omega\times(0,\infty)\subset R_{+}^{n+1}:=\Big\{z=(x,y)=(x_1,\cdots,x_n,y)\in\mathbb{R}^{n+1}|y>0\Big\},$$
and its lateral boundary by
$$\partial_{L}\mathcal{C}=\partial\Omega\times(0,\infty).$$
The extension function belongs to the space
$$
H_{0,L}^{s}(\mathcal{C}):=\overline{\mathcal{C}_{0}^{\infty}\big(\Omega\times[0,\infty)\big)}^{\|\cdot\|_{H_{0,L}^{s}(\mathcal{C})}}\quad\mbox{with}
\quad\big\|z\big\|_{H_{0,L}^{s}(\mathcal{C})}=\Big(k_s\int_{\mathcal{C}}y^{1-2s}|\nabla z|^2dxdy\Big)^{\frac{1}{2}},
$$
where $k_s$ is a normalization constant. With this constant we have that the extension operator is an isometry between $H_{0}^{s}(\Omega)$ and $H_{0,L}^{s}(\mathcal{C})$.
Moreover, for any function $\psi\in H_{0,L}^{s}(\mathcal{C})$, we have the following trace inequality
$$\big\|\psi(\cdot,0)\big\|_{H_{0}^{s}(\Omega)}\leq\big\|\psi\big\|_{H_{0,L}^{s}(\mathcal{C})}.$$
As shown in \cite{LCS,CS,CT,CDS}, $(-\Delta)^{s}$ can also be characterized by
\begin{equation}\label{FLDN}
-\frac{1}{\kappa_{s}}\lim_{y\rightarrow0^{+}}y^{1-2s}\frac{\partial w}{\partial y}(x,y)=(-\Delta)^{s}u(x)=c_{n,s}P.V.\int_{\mathbb{R}^n}\frac{u(x)-u(y)}{|x-y|^{n+2s}}dy.
\end{equation}
When $\Omega=\mathbb{R}^n$, the $s$-harmonic extension and the fractional Laplacian have explicit expressions in terms of the Poisson and the Riesz kernels, respectively
\begin{equation}\label{FLDN1}
w(x,y)=c_{n,s}y^{2s}\int_{\mathbb{R}^n}\frac{u(t)}{(|x-t|^2+y^2)^{\frac{n+2s}{2}}}dt\hspace{2mm}\mbox{and}\hspace{2mm}(-\Delta)^{s}u(x)=d_{N,s}P.V.\int_{\mathbb{R}^N}\frac{u(x)-u(t)}{|x-t|^{n+2s}}dt,
\end{equation}
In fact the extension technique is developed originally for the fractional Laplacian defined in the whole space \cite{CS}. The constants in \eqref{FLDN}
and \eqref{FLDN1} satisfy the identity $2sc_{N,s}k_{s}=d_{n,s}$. Their explicit value can be consulted for instance in \cite{Colorado}.
Thus we can see that the problem \eqref{ele-100} can be transformed into the following problem
For function $f:[0,\infty)\rightarrow\mathbb{R}$ and let $w$ be the s-harmonic extension of $u$ to half cylinder $\Omega\times[0,\infty)$, that is, $w$ satisfies $tr|_{\Omega\times\{0\}}w=u$ and it solves
\begin{equation}\label{ele-100-1}
\left\lbrace
\begin{aligned}
&-\mbox{div}(y^{1-2s}\nabla w)=0\hspace{4.14mm}\quad\quad\quad \quad\quad\quad \quad  \mbox{in}\hspace{2mm} \mathcal{C},\\
&w=0\quad\quad \quad\quad\quad\quad\hspace{12mm}\hspace{2mm}\hspace{8.9mm}\hspace{10mm} \mbox{on}\hspace{2mm}\partial_{L}\mathcal{C},\\
&\partial_{\nu}^{s}w=\big(|x|^{-(n-2s)}\ast F(w)\big)f(w)\quad\quad\quad\hspace{3.3mm} \mbox{in}\hspace{2mm}\Omega\times\{y=0\}.
\end{aligned}
		\right.
\end{equation}
where
\begin{equation}\label{CFLL-00}
\partial_{\nu}^{s}w(x,0)\doteq-\frac{1}{\kappa_{s}}\lim_{y\rightarrow0^{+}}y^{1-2s}\frac{\partial w}{\partial y}(x,y), \ \ \mbox{for}\ \  x\in\Omega.
\end{equation}
An energy solution to this problem is a function $H_{0,L}^{s}(\mathcal{C})$ such that
$$
\frac{1}{k_s}\int_{\mathcal{C}}y^{1-2s}\big\langle\nabla w,\psi\big\rangle dxdy=\int_{\Omega}\big(|x|^{-(n-2s)}\ast w^{2_{s}^{\sharp}-1-\epsilon}\big)w^{2_{s}^{\sharp}-2-\epsilon}\psi dx,\quad\forall\psi\in H_{0,L}^{s}(\mathcal{C}).
$$
For any energy solution $w\in H_{0,L}^{s}(\mathcal{C})$ to \eqref{ele-100-1},
the function $u=w(\cdot,0)$, defined in the sense of traces, belongs to the space $H_0^{s}(\Omega)$ and is an energy solution to problem \eqref{ele-100}.

Once one establishes a uniform bound for $u$ near the blow-up point, one of the next natural questions is to examine the shape and exact
rates for blowing-up of solutions. The aim of the latter part of the present work is to consider the asymptotic behavior of solutions for problem \eqref{prondgr} involving fractional order operators with an almost critical exponent.
Its proof involves the method of blow-up argument, the moving planes method combined with the Kelvin transform, Morse iteration technique and integral inequality involving convolution terms.

In this line of research, we would like to mention some related results to our problem.
Recently, Deng and Luo in \cite{DL-2} used the finite
dimensional reduction method to consider the slightly subcritical problem
\begin{equation}\label{112ele}
    \left\lbrace
\begin{aligned}
    &(-\Delta)^su=(|x|^{-\mu}\ast u^{2^{\ast}_{\mu,s}-\epsilon})u^{2^{\ast}_{\mu,s}-1-\epsilon}\quad \hspace{1.6mm}\mbox{in}\hspace{2mm}\Omega, \\
    &u>0\quad\quad \quad\quad\quad\quad\quad\quad\quad\quad\quad\quad\quad\quad\hspace{2mm}\mbox{in}\hspace{2mm}\Omega,\\
    &u=0\quad \quad\quad\quad\quad\quad\quad\quad\quad\quad\quad\quad\quad\quad\hspace{2mm}\mbox{on}\hspace{2mm}\partial\Omega,
   \end{aligned}
\right.
\end{equation}
They proved that if $n>2s$, $s\in(0,\min\{1,\frac{3n-2}{6}\})$ and $\mu\in(0,\min\{4s,n\})$, as $\epsilon\rightarrow0$
problem \eqref{112ele} admits solution, which blows up and concentrates around the local minimum point of the Robin
function $H(x,x)$, where $H(x,y)$ is the regular part of the Green function $G(x,y)$, i.e.
   \begin{equation}\label{Robin}
	H(x,y)=\frac{\gamma_{n,s}}{|x-y|^{n-2s}}-G(x,y)\hspace{2mm}\mbox{with}\hspace{2mm}
\gamma_{n,s}=\frac{1}{|S^{n-1}|}\cdot\frac{2^{1-2s}\Gamma(\frac{n-2s}{2})}{\Gamma(\frac{n}{2})\Gamma(s)},
\end{equation}
where $(-\Delta)^sG(\cdot,y)=\delta_y$ in $\Omega$ and $G(\cdot,y)=0$ on $\partial\Omega$.
The weak solution theory (regularity, existence, symmetry, decay) for the fractional critical Choquard problem
with $p\in(\frac{2n-\mu}{n},\frac{2n-\mu}{n-2s})$ was developed by d'Avenia et al. in \cite{Avenia}. Extending further,
Mukherjee and Sreenadh in \cite{MTS} addressed a fractional Choquard-type Br\'{e}zis-Nirenberg problem.
A problem analogous to \eqref{112ele} was considered by Ghimenti et al. \cite{Ghimenti-3} using variational techniques.
They applied Lusternik-Schnirelmann category theory and nonlocal compactness arguments to obtain multiple positive solutions.
When $s=1$,	Chen and Wang \cite{cw}
studied the asymptotic behavior of solutions for \eqref{112ele} by employing the finite dimensional reduction method.
Later on, the latter three authors of this work in \cite{Cingolani} studied that the exact rates of blow-up and the location of blow-up points of the solutions for problem \eqref{112ele}.
For the studied of nonlocal Hartree problem, we refer to \cite{CCYS,Cingolani-2,DL-1, Ghimenti-1,Ghimenti-2,cw-1,V-Moroz} and references therein.

 We will now state the other main results of this paper. Let us start with a asymptotic behavior of single blow-up solutions.
\begin{thm}\label{1-prondgr}
Assume that $2s<n<\min\{6s,n+2s\}$, $0<s<1$ and $\epsilon$ is sufficiently small. Let $u_\varepsilon$ be a solution of \eqref{prondgr} such that
\begin{equation}\label{minimi}
\frac{\int_{\Omega}|A_{s}^{1/2}  u_{\epsilon}|^2dx}{\left[\int_{\Omega}(|x|^{-(n-2s)} \ast|u_{\epsilon}|^{2_{s}^{\sharp}-1-\epsilon})|u_{\epsilon}|^{2_{s}^{\sharp}-1-\epsilon} dx\right]^{\frac{1}{2_{s}^{\sharp}-1-\epsilon}}}=C_{HLS}+o(1)\quad\mbox{as}\quad\epsilon\rightarrow0.
\end{equation}
 Then
there exists $x_0\in\Omega$ such that, after passing to a subsequence, we have
\begin{equation*}
u_\epsilon\rightarrow0\hspace{2mm}\mbox{in}\hspace{2mm}
\left\lbrace
\begin{aligned}
&C_{loc}^\alpha(\Omega\setminus\{x_0\})\hspace{2mm}\mbox{for all}\hspace{2mm}\alpha\in(0,2s)\hspace{2mm}\mbox{if}\hspace{2mm}s\in(0,1/2],\hspace{2mm}\mbox{as}\hspace{2mm}\epsilon\rightarrow0,
\\&C_{loc}^{1,\alpha}(\Omega\setminus\{x_0\})\hspace{2mm}\mbox{for all}\hspace{2mm}\alpha\in(0,2s-1)\hspace{2mm}\mbox{if}\hspace{2mm}s\in(1/2,1),\hspace{2mm}\mbox{as}\hspace{2mm}\epsilon\rightarrow0.
   \end{aligned}
\right.
\end{equation*}
	\end{thm}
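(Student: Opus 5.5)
The plan is a concentration-compactness argument followed by local regularity away from the concentration point. As a first step, I test \eqref{prondgr} with $u_\epsilon$:
\[
\|u_\epsilon\|^2_{H_0^s}=\int_\Omega(|x|^{-(n-2s)}\ast u_\epsilon^{2_s^\sharp-1-\epsilon})u_\epsilon^{2_s^\sharp-1-\epsilon}\,dx=:D_\epsilon .
\]
Combined with \eqref{minimi}, this gives $D_\epsilon^{1-1/(2_s^\sharp-1-\epsilon)}\to C_{HLS}$. Hence $\|u_\epsilon\|_{H_0^s}$ is bounded and bounded away from zero, and $u_\epsilon$ is a minimizing sequence for the Hartree–Sobolev quotient \eqref{Prm} with $\mu=n-2s$. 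In that case $2^\ast_{\mu,s}=2_s^\sharp-1=\frac{n+2s}{n-2s}$, and the exponent $\epsilon\to0$ is handled by Hölder on bounded $\Omega$. Since $C_{HLS}$ is not attained on a bounded domain (the extremals are the bubbles $W[\xi,\lambda]$ in \eqref{defU}), I will apply Lions' concentration-compactness to the measures $|A_s^{1/2}u_\epsilon|^2dx$ (or $y^{1-2s}|\nabla w_\epsilon|^2$ on $\mathcal C$ via the extension \eqref{ele-100-1}) and $(|x|^{-\mu}\ast u_\epsilon^{p_\epsilon})u_\epsilon^{p_\epsilon}dx$. The strict subadditivity coming from the exponent $2^\ast_{\mu,s}>1$ in \eqref{Prm} will force the weak limit $u_0$ to vanish and the energy to concentrate at a single point $x_0\in\overline\Omega$: $u_\epsilon\rightharpoonup0$ and $|A_s^{1/2}u_\epsilon|^2\rightharpoonup C\delta_{x_0}$.

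Next I need $x_0\in\Omega$ together with control near the boundary. Here I invoke Theorem \ref{prior-1} through Remark \ref{Rem1-2}, which covers $f(u)=u^{2_s^\sharp-2-\epsilon}$ under \eqref{minimi}. It gives $r>0$ with $\sup_{\mathcal Q(\Omega,r)}u_\epsilon\le C$. Because $u_\epsilon\to0$ in $L^2$ and is uniformly bounded near $\partial\Omega$, there is no concentration there, so $x_0\in\mathcal M(\Omega,r)\subset\Omega$.

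The main step, and the one I expect to be the main obstacle, is to show that $u_\epsilon$ is bounded in $L^\infty_{loc}(\overline\Omega\setminus\{x_0\})$ and tends to $0$. Fix a ball $B_{2\rho}(x)$ away from $x_0$. The convolution $V_\epsilon=|x|^{-(n-2s)}\ast u_\epsilon^{2_s^\sharp-1-\epsilon}$ is a nonlocal potential that sees the concentrating mass at $x_0$, so I first bound it. By HLS with $\mu=n-2s<4s$ (this uses $n<6s$), $V_\epsilon$ is bounded in $L^{r}$ for the relevant $r$. More precisely, I split $V_\epsilon$ into the part coming from $B_\delta(x_0)$, which is uniformly bounded on $B_{2\rho}(x)$ since $|x-t|\ge c$ there and $\|u_\epsilon^{p_\epsilon}\|_{L^1}$ is bounded, and the far part, which is small in $L^{n/(2s)}$-type norms. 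The equation then reads $A_su=a_\epsilon(x)u$ with $a_\epsilon=V_\epsilon u_\epsilon^{2_s^\sharp-2-\epsilon}$. Away from $x_0$, the smallness of $\|u_\epsilon\|_{L^{2_s^\ast}(B_{2\rho})}$ (no concentration there) makes $\|a_\epsilon\|_{L^{n/(2s)}(B_{2\rho})}$ small. I then run a localized Moser iteration on the extension $w_\epsilon$ with cutoff test functions $\eta^2 w_\epsilon^{\beta}$, using the weighted trace Sobolev inequality, to upgrade to $\|u_\epsilon\|_{L^\infty(B_\rho)}\le C\|u_\epsilon\|_{L^2(B_{2\rho})}\to0$. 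I expect this iteration, with its nonlocal coefficient, to be the delicate point.

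Finally, with $u_\epsilon$ and hence the right-hand side $V_\epsilon u_\epsilon^{2_s^\sharp-2-\epsilon}$ uniformly bounded on compact subsets of $\Omega\setminus\{x_0\}$ and tending to $0$ in $L^\infty$, I apply the Schauder/Hölder regularity for the spectral fractional Laplacian (Cabré–Tan, Caffarelli–Silvestre, via the extension). This yields uniform $C^{2s'}$ bounds for $s\le1/2$ and $C^{1,2s-1+\cdot}$ bounds for $s>1/2$ on compact subsets. Arzelà–Ascoli, combined with the $L^\infty$ convergence to $0$, then gives $u_\epsilon\to0$ in $C^\alpha_{loc}(\Omega\setminus\{x_0\})$ for every $\alpha<2s$, respectively in $C^{1,\alpha}_{loc}$ for every $\alpha<2s-1$, along a subsequence.
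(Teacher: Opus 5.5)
Your proposal is correct in outline, but it follows a genuinely different route from the paper.

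\textbf{The paper's route.} The paper never uses concentration--compactness. It rescales at a maximum point, $v_\epsilon(x)=\mu_\epsilon^{-1}u_\epsilon(\mu_\epsilon^{-(2_s^\sharp-2-\epsilon)/(2s)}x+x_\epsilon)$ with $\alpha_{n,s}\mu_\epsilon=\|u_\epsilon\|_\infty\to\infty$ (Lemma \ref{infinite}). It then shows $v_\epsilon\to W$ locally uniformly, using the classification of the Hartree bubbles (Lemma \ref{finite}). Next, via the Kelvin transform and a Moser iteration on the Kelvin-transformed extension, it proves the global pointwise bound of Lemma \ref{cWU}: $u_\epsilon\le c\,W[\mu_\epsilon^{(2_s^\sharp-2-\epsilon)/(2s)},x_\epsilon]$, i.e.\ $u_\epsilon(x)\le C\mu_\epsilon^{-1+\frac{(n-2s)\epsilon}{4s}}|x-x_\epsilon|^{-(n-2s)}$. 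Combined with the boundary estimate of Theorem \ref{prior-1} (which the paper also invokes, as Theorem \ref{blow-up}), this forces $u_\epsilon\to0$ away from $x_0=\lim x_\epsilon$. The $C^\alpha_{loc}$ and $C^{1,\alpha}_{loc}$ convergence then comes out of Section \ref{consequence00} as a by-product of the stronger statement $\|u_\epsilon\|_\infty u_\epsilon\to b_{n,s}G(\cdot,x_0)$, obtained from the Green representation of the extension and elliptic regularity.

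\textbf{What each route buys.} Your argument (Lions' lemma plus non-attainment of $C_{HLS}$ on a bounded domain, then small-energy regularity away from the single concentration point) is softer. It needs neither the bubble classification, nor the Kelvin transform, nor a sharp global decay estimate, and it suffices for this theorem. It does not give any rate, nor the identification of $x_0$ with the limit of the maximum points, nor control of $\|u_\epsilon\|_\infty u_\epsilon$. That pointwise bubble bound is exactly what the paper needs afterwards for Theorems \ref{consequence}, \ref{prondgr-1} and \ref{remainder terms}.

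\textbf{Your third step needs repair as written.}
\begin{itemize}
\item The coefficient should be $a_\epsilon=V_\epsilon u_\epsilon^{2_s^\sharp-3-\epsilon}$; the exponent is positive because $n<6s$. With this, HLS and H\"older do give $\|a_\epsilon\|_{L^{n/(2s)}(B_{2\rho})}\to0$ from $\|u_\epsilon\|_{L^{2_s^\sharp}(B_{2\rho})}\to0$.
\item A potential that is merely small in the critical space $L^{n/(2s)}$ does not by itself give a sup bound. When you test with $\eta^2w_\epsilon^{2\beta-1}$, the admissible smallness degrades like $1/\beta$. So one pass yields only uniform $L^q$ bounds for each finite $q$, with $\epsilon$ small depending on $q$.
\item Close this with a bootstrap. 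Cover $\overline\Omega\setminus B_\delta(x_0)$ by finitely many interior balls together with $\mathcal{Q}(\Omega,r)$, where Theorem \ref{prior-1} already gives $L^\infty$. This makes $u_\epsilon^{p_\epsilon}$ uniformly bounded in $L^q(\Omega\setminus B_\delta(x_0))$ for some $q>n/(2s)$. Then the far part of $V_\epsilon$ is bounded, so $V_\epsilon\in L^\infty(B_{2\rho})$ and $a_\epsilon$ is bounded in $L^t$ with $t>n/(2s)$. A second iteration, as in Lemma \ref{regular1}, then gives the sup bound.
\item Because the operator is nonlocal, the right-hand side of that local estimate is the weighted $L^2$ norm of the extension $w_\epsilon$ on a half-ball, not $\|u_\epsilon\|_{L^2(B_{2\rho})}$. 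It still tends to $0$, since $w_\epsilon\rightharpoonup0$ and the weighted embedding is compact. This half-ball sup bound on $w_\epsilon$ is also what your Schauder step requires.
\item In Lions' lemma, handle the moving exponent $p_\epsilon$ by H\"older with respect to the finite measure $|x-t|^{-(n-2s)}dx\,dt$ on $\Omega\times\Omega$. Plain Lebesgue H\"older on $\Omega$ is not the right comparison for the Hartree term.
\end{itemize}
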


\begin{thm}\label{prondgr-1}
Let the assumptions of Theorem \ref{1-prondgr} be satisfied and $x_0$ is a point given by Theorem \ref{prondgr}. Then $x_0$ is a critical of $\phi(x):=H(x,x)$ (Robin's function of $\Omega$) for $x\in\Omega$.
\end{thm}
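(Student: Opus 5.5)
The strategy is a localized Pohozaev identity for the extension problem \eqref{ele-100-1}, with $w_\epsilon=E_s(u_\epsilon)$. This identity is combined with the blow-up information from Theorem \ref{1-prondgr} and the uniform boundary bound \eqref{boundary-2} of Theorem \ref{prior-1}; Remark \ref{Rem1-2} covers the nonlinearity $u^{2_s^\sharp-2-\epsilon}$ under \eqref{minimi}.

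\textbf{Step 1 (convergence away from $x_0$).} Let $M_\epsilon=\max u_\epsilon=u_\epsilon(x_\epsilon)$, so $M_\epsilon\to\infty$ and $x_\epsilon\to x_0$. The rescaled functions $M_\epsilon^{-1}u_\epsilon(x_\epsilon+M_\epsilon^{-2/(n-2s)}\cdot)$ converge to the bubble $W[0,\lambda]$ of \eqref{defU}, by the classification of Le and the minimizing property \eqref{minimi}. Combined with moving planes near $\partial\Omega$ and a Moser iteration for the Hartree term, this gives a uniform decay estimate $u_\epsilon\le C M_\epsilon^{-1}|x-x_\epsilon|^{-(n-2s)}$. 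Using the Green representation $u_\epsilon(x)=\int_\Omega G(x,t)(|t|^{-(n-2s)}\ast u_\epsilon^{2_s^\sharp-1-\epsilon})u_\epsilon^{2_s^\sharp-2-\epsilon}dt$ together with this decay, I will show
\[
M_\epsilon u_\epsilon(x)\to b\,G(x,x_0)\quad\text{in } C^{1,\alpha}_{loc}(\overline\Omega\setminus\{x_0\}),
\]
and similarly for the extensions, $M_\epsilon w_\epsilon\to b\,\mathcal G(\cdot,x_0)$ locally on $\overline{\mathcal C}\setminus\{(x_0,0)\}$. Here $b=\int_{\mathbb R^n}(|x|^{-(n-2s)}\ast W^{2_s^\sharp-1})W^{2_s^\sharp-2}dx$, and $\mathcal G$ is the $s$-harmonic extension of $G$. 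The condition $n<6s$ ensures the Hartree right-hand side is integrable against the decay, so that the mass concentrates to a Dirac.

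\textbf{Step 2 (local Pohozaev identity).} Test the extension equation with $\partial_{x_j}w_\epsilon$ on the half-ball $B^+_\rho=\{(x,y):|(x-x_0,y)|<\rho,\ y>0\}$. Since $-\mathrm{div}(y^{1-2s}\nabla w)=0$ is translation invariant in $x$, we obtain
\[
\int_{\partial''B^+_\rho}y^{1-2s}\Big(\partial_\nu w\,\partial_j w-\tfrac12|\nabla w|^2\nu_j\Big)=\kappa_s\int_{B_\rho(x_0)}\big(|x|^{-(n-2s)}\ast u^{p_\epsilon}\big)\,\partial_j\Big(\tfrac{u^{p_\epsilon}}{p_\epsilon}\Big)dx,\qquad p_\epsilon=2_s^\sharp-1-\epsilon .
\]
Integrating by parts on the right-hand side produces a boundary term on $\partial B_\rho$ plus $-\frac{1}{p_\epsilon}\int_{B_\rho}\partial_j(|x|^{-(n-2s)}\ast u^{p_\epsilon})u^{p_\epsilon}$. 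Write the convolution as the sum of its part over $B_\rho$ and its part over $\Omega\setminus B_\rho$. The part over $B_\rho$ cancels by antisymmetry of $\nabla|x-t|^{-(n-2s)}$. The remainder, after multiplying by $M_\epsilon^2$, is $o(1)$, because $u_\epsilon=O(M_\epsilon^{-1})$ away from $x_0$ and $p_\epsilon>1$. The boundary term is also $o(M_\epsilon^{-2})$.

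\textbf{Step 3 (passage to the limit).} Multiply the identity by $M_\epsilon^2$ and let $\epsilon\to0$ using Step 1. The left-hand side converges to
\[
b^2\int_{\partial''B^+_\rho}y^{1-2s}\Big(\partial_\nu\mathcal G\,\partial_j\mathcal G-\tfrac12|\nabla\mathcal G|^2\nu_j\Big),
\]
while the right-hand side tends to $0$. Decompose $\mathcal G=\Gamma-\mathcal H$, where $\Gamma$ is the extended fundamental solution and $\mathcal H$ is the extension of $H(\cdot,x_0)$. The purely singular $\Gamma$–$\Gamma$ term vanishes by radial symmetry, and the $\mathcal H$–$\mathcal H$ term tends to $0$ as $\rho\to0$. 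The cross terms reduce, as in the classical $s=1$ computation adapted to the weight $y^{1-2s}$, to $c\,\partial_{x_j}H(x,x_0)|_{x=x_0}=\frac c2\partial_j\phi(x_0)$ with $c\neq0$. Hence $\nabla\phi(x_0)=0$.

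\textbf{Expected main obstacle.} I expect the main obstacle to be Step 1, namely the uniform pointwise decay bound $u_\epsilon\le CM_\epsilon^{-1}|x-x_\epsilon|^{-(n-2s)}$ in the nonlocal Hartree setting. My first attempt will be a Kelvin transform plus a comparison argument on the extension. For the convolution factor I will use the integral estimates from Theorem \ref{prior-1} to get a uniform bound $|x|^{-(n-2s)}\ast u_\epsilon^{p_\epsilon}\le C$ away from $x_0$. It is also necessary to check that $M_\epsilon^{\epsilon}\to1$, which follows from the Pohozaev identity on all of $\Omega$ (giving $\epsilon\sim M_\epsilon^{-2}$). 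This is what makes the limiting constant $b$ correct and excludes a boundary blow-up point, so that $x_0\in\Omega$.
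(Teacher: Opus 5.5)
Your proposal is correct and matches the paper's proof. Both use the translational local Pohozaev identity, obtained by testing with $\partial_j w_\epsilon$ on half-balls $B_r^+$ about $(x_0,0)$. The inner Hartree double integral cancels by antisymmetry, and the remaining Hartree terms vanish by the decay bound after multiplying by $\|u_\epsilon\|_\infty^2$. One then passes to the limit via $\|u_\epsilon\|_\infty w_\epsilon\to b_{n,s}G$ and extracts $\partial_j\phi(x_0)$ from the $\Gamma$--$\mathcal H$ cross terms as the radius shrinks.

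The paper differs in two minor ways. It averages in $r\in[\delta,2\delta]$ (Lemma \ref{GXX}) to justify the limit of the weighted boundary integrals near $y=0$. It also gets $\mu_\epsilon^\epsilon\to1$ from the interior estimate of Proposition \ref{prosition-identity} (Lemma \ref{thm:existenceofweaksolution}), not from a global Pohozaev identity, whose boundary terms on $\partial_L\mathcal C$ the paper regards as problematic for $s<1$.
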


The following the behavior of solutions in terms of the Green's function.
\begin{thm}\label{consequence}
 Let the assumptions of Theorem \ref{1-prondgr} be satisfied. Then
\begin{equation*}
\begin{split}
\lim\limits_{\epsilon\rightarrow0^{+}}&\|u_{\epsilon}\|_{L^{\infty}(\Omega)}u_{\epsilon}(x)=b_{n,s}G(x,x_{0})\\&
\mbox{in}\hspace{2mm}
\left\lbrace
\begin{aligned}
&C_{loc}^\alpha(\Omega\setminus\{x_0\})\hspace{2mm}\mbox{for all}\hspace{2mm}\alpha\in(0,2s)\hspace{2mm}\mbox{if}\hspace{2mm}s\in(0,1/2],\hspace{2mm}\mbox{as}\hspace{2mm}\epsilon\rightarrow0,
\\&C_{loc}^{1,\alpha}(\Omega\setminus\{x_0\})\hspace{2mm}\mbox{for all}\hspace{2mm}\alpha\in(0,2s-1)\hspace{2mm}\mbox{if}\hspace{2mm}s\in(1/2,1),\hspace{2mm}\mbox{as}\hspace{2mm}\epsilon\rightarrow0,
   \end{aligned}
\right.
\end{split}
\end{equation*}
where
$$
b_{n,s}:=\frac{|S^{n-1}|}{2}\frac{\Gamma(s)\Gamma(n/2)}{\Gamma((n+2s)/2)}\alpha^{2_{s}^{\sharp}}_{n,s}\widetilde{\beta}_{n,s}
$$
with $\widetilde{\beta}_{n,s}=\frac{\pi^{n/2}\Gamma(s)}{\Gamma(\frac{n+2s}{2})}\sqrt{\frac{2^{2s}\Gamma(\frac{n+2s}{2})\Gamma(\frac{n+2s}{2})}{\pi^{n/2}\Gamma(\frac{n-2s}{2})\Gamma(s)}}$ and $\alpha_{n,s}:=\alpha_{n,n-2s,s}$ is defined in \eqref{afal}.
\end{thm}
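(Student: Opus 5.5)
The strategy is to represent $u_\epsilon$ through the Green's function and pass to the limit after multiplying by $M_\epsilon:=\|u_\epsilon\|_{L^\infty(\Omega)}$. Set $g_\epsilon:=(|x|^{-(n-2s)}\ast u_\epsilon^{2_s^\sharp-1-\epsilon})u_\epsilon^{2_s^\sharp-2-\epsilon}$. Then
\begin{equation*}
M_\epsilon u_\epsilon(x)=\int_\Omega G(x,z)\,M_\epsilon g_\epsilon(z)\,dz .
\end{equation*}
The claim then follows once we show that $M_\epsilon g_\epsilon\rightharpoonup b_{n,s}\delta_{x_0}$ as measures, with enough uniform control away from $x_0$ to upgrade the convergence to $C^\alpha_{loc}$ or $C^{1,\alpha}_{loc}$.

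\textbf{Step 1 (blow-up profile).} Let $x_\epsilon$ be a maximum point of $u_\epsilon$. By Theorem \ref{1-prondgr} and the boundary bound \eqref{boundary-2} (via Remark \ref{Rem1-2}), we have $x_\epsilon\to x_0\in\Omega$. Set $\lambda_\epsilon:=M_\epsilon^{\frac{2}{n-2s}(1+O(\epsilon))}$, chosen so that the rescaled function
\begin{equation*}
v_\epsilon(y)=M_\epsilon^{-1}u_\epsilon(x_\epsilon+\lambda_\epsilon^{-1}y)
\end{equation*}
solves the same equation up to a factor tending to $1$. This requires $M_\epsilon^{\epsilon}\to1$, which follows from the Pohozaev identity on the extension. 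The energy assumption \eqref{minimi}, together with the classification of Le \cite{ple}, then gives $v_\epsilon\to W[0,\lambda_0]$ locally, normalized so that $W(0)=1$.

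\textbf{Step 2 (uniform decay; main obstacle).} We need the pointwise bound
\begin{equation*}
v_\epsilon(y)\le C(1+|y|)^{-(n-2s)}\quad\mbox{on the rescaled domain}.
\end{equation*}
To prove it, I would use the Kelvin transform of the extension $w_\epsilon$ together with moving planes and a Moser iteration in the style of Han, exploiting the $L^{2_s^\sharp}$-smallness of $v_\epsilon$ outside large balls. The convolution term needs its own treatment: one must show that $|x|^{-(n-2s)}\ast v_\epsilon^{2_s^\sharp-1-\epsilon}$ stays bounded and in fact decays like $|y|^{-(n-2s)}$. This follows from the HLS inequality \eqref{hlsi} and the restriction $n<6s$, which makes the nonlinearity $u^{2_s^\sharp-2}$ integrable against the tails. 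I expect this to be the hardest step.

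\textbf{Step 3 (mass computation).} Changing variables, $M_\epsilon g_\epsilon(z)\,dz$ equals $M_\epsilon^{2_s^\sharp-1-\epsilon}\lambda_\epsilon^{-n}$ times the rescaled density. Since $M_\epsilon^{2_s^\sharp-1}\lambda_\epsilon^{-n}\to1$, the total mass is
\begin{equation*}
\int_{\mathbb{R}^n}(|x|^{-(n-2s)}\ast W^{2_s^\sharp-1})W^{2_s^\sharp-2}\,dy+o(1).
\end{equation*}
Step 2 supplies the dominated convergence needed to pass to this limit. Evaluating the integral with the explicit bubble, using the Beta-function identities for $\int(1+|y|^2)^{-a}dy$ and the fact that $|x|^{-(n-2s)}\ast W^{2_s^\sharp-1}$ is an explicit multiple of $W$, yields exactly $b_{n,s}$.

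\textbf{Step 4 (local convergence away from $x_0$).} Away from $x_0$, Step 2 gives $u_\epsilon\le CM_\epsilon^{-1}$, hence $M_\epsilon g_\epsilon\le CM_\epsilon^{-(2_s^\sharp-2)}\to0$ uniformly on compact subsets of $\Omega\setminus\{x_0\}$. The family $M_\epsilon u_\epsilon$ is therefore uniformly bounded there, with right-hand sides tending to zero. Schauder-type estimates for $A_s$ (via the extension, as in \cite{CS,Ros-Oton-1}) give compactness in $C^{\alpha}_{loc}$ for $s\le1/2$ and in $C^{1,\alpha}_{loc}$ for $s>1/2$. Weak convergence of the measures then identifies the limit as $b_{n,s}G(x,x_0)$, because $G(x,\cdot)$ is continuous away from $x$ and the contributions near $x$ vanish.
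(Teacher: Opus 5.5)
Your route is the paper's: rescale at the maximum point and get the bubble limit from \eqref{minimi}; prove $v_\epsilon\le CW$ by Kelvin transform and Moser iteration (Lemma \ref{cWU}); get $\mu_\epsilon^\epsilon\to1$ from a Pohozaev-type argument; compute the mass with Lemma \ref{p1-00}; finish with the Green representation (the paper uses $G_{\mathcal{C}}$ on the extension) and regularity away from $x_0$. Two points are wrong as stated, however. First, the scaling count in Step 3 is off, because the convolution carries its own factor $\lambda_\epsilon^{-2s}M_\epsilon^{2_s^\sharp-1-\epsilon}$. In fact
\[
M_\epsilon g_\epsilon(z)\,dz=M_\epsilon^{2(2_s^\sharp-1-\epsilon)}\lambda_\epsilon^{-(n+2s)}\big(|y|^{-(n-2s)}\ast v_\epsilon^{2_s^\sharp-1-\epsilon}\big)v_\epsilon^{2_s^\sharp-2-\epsilon}\,dy .
\]
Your prefactor $M_\epsilon^{2_s^\sharp-1}\lambda_\epsilon^{-n}$ is of order $M_\epsilon^{-1}\to0$ when $\lambda_\epsilon\approx M_\epsilon^{2/(n-2s)}$, not $1$. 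Take the exact choice $\lambda_\epsilon=M_\epsilon^{(2_s^\sharp-2-\epsilon)/(2s)}$, as the paper does. Then the rescaled equation holds exactly, so Step 1 needs nothing about $M_\epsilon^\epsilon$. The prefactor becomes exactly $M_\epsilon^{\epsilon(n-2s)/(2s)}$, and this is where $M_\epsilon^\epsilon\to1$ is actually used.

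Second, your ordering is circular. The bound $\epsilon\le CM_\epsilon^{-2-O(\epsilon)}$ behind $M_\epsilon^\epsilon\to1$ (Lemmas \ref{thm:uniquenessofweaksolution} and \ref{thm:existenceofweaksolution}) uses the decay of Step 2 to make the error terms $O(M_\epsilon^{-2})$. Moreover, it is not the global Pohozaev identity on $\mathcal{C}$: as the remark after Lemma \ref{thm:uniquenessofweaksolution} explains, its lateral-boundary term cannot be handled as for $s=1$. The paper instead uses the localized identity of Proposition \ref{prosition-identity} on $\mathcal{M}(\Omega,r/2)$, averaged over $r\in[\rho,2\rho]$. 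Conversely, Step 2 cannot assume $M_\epsilon^\epsilon\to1$, yet it does need $M_\epsilon^\epsilon\le C$. The Kelvin-transformed problem \eqref{CF102} carries the weights $|x|^{-\epsilon(n-2s)}$ on $T(\Omega_\epsilon)\subset\{|x|\gtrsim\lambda_\epsilon^{-1}\}$, and you never say where this bound comes from. The paper gets it from the energy (Lemma \ref{exists}). With $p=2_s^\sharp-1-\epsilon$, the integral $\int_{B_1}\int_{B_1}|x-y|^{-(n-2s)}v_\epsilon^{p}(x)v_\epsilon^{p}(y)\,dx\,dy$ is bounded below by the local convergence to the bubble. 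It is bounded above by $M_\epsilon^{-\epsilon(n-2s)/(2s)}$ times the original Hartree energy, which is bounded. The consistent order is therefore:
\begin{enumerate}
\item exact rescaling and the bubble limit;
\item $M_\epsilon^\epsilon\le C$ from the energy;
\item the decay bound;
\item the localized Pohozaev identity, giving $M_\epsilon^\epsilon\to1$;
\item the mass computation;
\item the Green representation.
\end{enumerate}
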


We give the rate of blow-up of the maximum of the solutions.
	\begin{thm}\label{remainder terms}
		Let the assumptions of Theorem \ref{1-prondgr} be satisfied. Then
\begin{equation*}
\lim\limits_{\epsilon\rightarrow0}\frac{(n-2s)^2}{2[n+2s-\epsilon(n-2s)]}\epsilon\big\|u_{\epsilon}\big\|_{L^{\infty}(\Omega)}^{2}
=\frac{(n-2s)^2\gamma_{n,s}b_{n,s}^2}{2\kappa_{s}\alpha^2_{n,s}\widetilde{\beta}_{n,s}B_{n,s}}M_{n,s}|\phi(x_0)|,
\end{equation*}
where
$$B_{n,s}=\sigma_n\int_{0}^{\infty}\frac{r^{n-1}}{(1+r^2)^n}dr\hspace{2mm}\mbox{and}\hspace{2mm}M_{n,s}=\sigma_n\int_{0}^{1}\frac{r^{n-1}}{(1-r^2)^s}dr,
$$
where $\sigma_n$ is the area of the unit sphere in $\mathbb{R}^n$.
	\end{thm}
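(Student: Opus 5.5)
The plan is a Pohozaev identity applied on a small ball around the blow-up point $x_0$, written in extension form, with both sides evaluated asymptotically using Theorems \ref{1-prondgr} and \ref{consequence}. Set $M_\epsilon=\|u_\epsilon\|_{L^\infty(\Omega)}$ and let $x_\epsilon$ be a maximum point of $u_\epsilon$, so that $x_\epsilon\to x_0$. Let $w_\epsilon=E_s(u_\epsilon)$ be the $s$-harmonic extension to $\mathcal C$. Fix a small $\rho>0$ and work in the half-ball $B^+_\rho=\{(x,y):|(x-x_0,y)|<\rho,\ y>0\}$. I will test the extended equation \eqref{ele-100-1} against $\langle z-(x_0,0),\nabla w_\epsilon\rangle+\frac{n-2s}{2}w_\epsilon$ and integrate by parts.

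On the bottom face this produces the Hartree Pohozaev terms. The homogeneity defect in the exponent gives a contribution of order $\epsilon\int_\Omega(|x|^{-(n-2s)}\ast u_\epsilon^{p_\epsilon})u_\epsilon^{p_\epsilon}$, with coefficient of the form $\frac{(n-2s)^2}{2[n+2s-\epsilon(n-2s)]}$, where $p_\epsilon=2_s^\sharp-1-\epsilon$. It also produces boundary and outer terms, because the convolution kernel is global and $\Omega\setminus B_\rho(x_0)$ must be accounted for. By the strong convergence $u_\epsilon\to0$ away from $x_0$ in Theorem \ref{1-prondgr}, and the rescaled profile, those leftover terms are $o(M_\epsilon^{-2})$.

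The key steps, in order, are as follows.
\begin{enumerate}
\item Blow-up rescaling. Define $v_\epsilon(x)=M_\epsilon^{-1}u_\epsilon(x_\epsilon+M_\epsilon^{-\frac{2}{n-2s}(1+O(\epsilon))}x)$. The uniform bounds of Theorem \ref{prior-1} and Remark \ref{Rem1-2} give compactness, and the classification result of Le \cite{ple} identifies the limit as $W[0,\lambda]$ with $\alpha_{n,s}$ normalization. One must also check $M_\epsilon^{\epsilon}\to1$, i.e. $\epsilon\log M_\epsilon\to0$; this follows from the energy condition \eqref{minimi}.
\item The $\epsilon$-term. With this rescaling, $M_\epsilon^2\,\epsilon\int(|x|^{-(n-2s)}\ast u_\epsilon^{p_\epsilon})u_\epsilon^{p_\epsilon}$ should equal $\epsilon M_\epsilon^2$ times a constant expressible through $\alpha_{n,s}^{2\cdot 2_s^\sharp}$ and $B_{n,s}$, up to $o(1)$. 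Here $B_{n,s}$ is the integral of $(1+r^2)^{-n}$ coming from the Hartree identity $|x|^{-(n-2s)}\ast W^{2_s^\sharp-1}=cW$.
\item The boundary side. Multiply by $M_\epsilon^2$ and use Theorem \ref{consequence}: $M_\epsilon w_\epsilon\to b_{n,s}\mathcal G(\cdot,x_0)$ in $C^1$ away from $(x_0,0)$, where $\mathcal G$ is the extended Green function with singular part $\gamma_{n,s}$ and regular part $H$. The boundary integral over $\partial^+B^+_\rho$ of the Pohozaev quadratic form is bilinear in $\Gamma-\mathcal H$, where $\Gamma$ denotes the extended fundamental solution.
\item Evaluate the boundary integral. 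It is independent of $\rho$ as $\rho\to0$. The $\Gamma$–$\Gamma$ part vanishes by scaling, the $\mathcal H$–$\mathcal H$ part is $O(\rho^n)$, and the cross term yields $-\frac{n-2s}{2}\gamma_{n,s}H(x_0,x_0)$ times a weight integral. In the extension setting, this weight integral over the half-sphere with $y^{1-2s}$ weight and the Poisson kernel produces $M_{n,s}=\sigma_n\int_0^1 r^{n-1}(1-r^2)^{-s}dr$, together with the factor $\kappa_s^{-1}$. Since $\phi(x_0)=H(x_0,x_0)>0$ and $\phi=|\phi|$ there, equating the two sides gives the stated limit.
\end{enumerate}

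The main obstacle is step 3/4: computing the cross term exactly in the degenerate-weight extension. This requires the explicit expansion of the extended Green function near $(x_0,0)$, namely $\mathcal G(z,x_0)=\Gamma(z-x_0)-\mathcal H(z,x_0)$, where $\mathcal H(\cdot,x_0)$ is the $y^{1-2s}$-harmonic extension of $H(\cdot,x_0)$ and is $C^1$ up to $y=0$. My approach is to take $\rho\to0$ and reduce the computation to the sphere integral producing $M_{n,s}$. The cross term's dependence on $\nabla_x H$ vanishes by Theorem \ref{prondgr-1}.

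A secondary difficulty is controlling the nonlocal Hartree term across $\partial B_\rho$, i.e. showing that the tails of the convolution are $o(M_\epsilon^{-2})$. For this I would use the decay estimate $u_\epsilon\le CM_\epsilon^{-1}|x-x_\epsilon|^{-(n-2s)}$, obtained from the moving-plane/Kelvin comparison behind Theorem \ref{prior-1}, combined with the Hardy–Littlewood–Sobolev inequality \eqref{hlsi}.
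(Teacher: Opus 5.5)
Your proposal follows essentially the paper's proof. Lemma \ref{remainder-1} is precisely this local Pohozaev identity on the half-balls $B_r^+$ around $(x_0,0)$: it is multiplied by $\|u_\epsilon\|_{L^\infty(\Omega)}^2$, averaged over $r\in[\delta,2\delta]$ and passed to the limit via $\|u_\epsilon\|_{L^\infty(\Omega)}w_\epsilon\to b_{n,s}G$, after which the cross term between $\gamma_{n,s}|z-X_0|^{-(n-2s)}$ and $H$ yields $M_{n,s}\phi(x_0)$, and $\mu_\epsilon^\epsilon\to1$ turns $\mu_\epsilon^{(4s+(n-2s)\epsilon)/(2s)}$ into a multiple of $\|u_\epsilon\|^2_{L^\infty(\Omega)}$.

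Two supporting inputs come from different places than you say, without affecting the argument.
\begin{enumerate}
\item The decay bound you invoke is \eqref{00cU} of Lemma \ref{cWU}, obtained by a Kelvin transform of the rescaled solution plus Moser iteration. It is not a by-product of the boundary moving-plane argument of Theorem \ref{prior-1}, which only controls $u_\epsilon$ near $\partial\Omega$.
\item The paper gets $\mu_\epsilon^\epsilon\to1$ from the Pohozaev bound of Lemma \ref{thm:uniquenessofweaksolution} (then Lemma \ref{thm:existenceofweaksolution}). Your energy route is also valid: \eqref{minimi} forces the Hartree energy of $u_\epsilon$ to converge to that of the bubble; this energy equals $\mu_\epsilon^{(n-2s)\epsilon/(2s)}$ times the Hartree energy of $v_\epsilon$; and Fatou's lemma then gives $\limsup_{\epsilon\to0}\mu_\epsilon^\epsilon\le1$.
\end{enumerate}
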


Also, we study the Brezis-Nirenberg problem involving critical Hartree nonlinearity
\begin{equation}\label{ele-2}
    \left\lbrace
\begin{aligned}
    &A_{s} u=(|x|^{-\mu}\ast u^{2^{\ast}_{\mu,s}})u^{2^{\ast}_{\mu,s}-1}+\epsilon u \quad\quad\hspace{2.7mm} \mbox{in}\hspace{2mm}\Omega, \\
    &u>0\quad\quad \quad\quad\quad\quad\quad\quad\quad\quad\quad\quad\quad\quad\hspace{2mm}\mbox{in}\hspace{2mm}\Omega,\\
    &u=0\quad \quad\quad\quad\quad\quad\quad\quad\quad\quad\quad\quad\quad\quad\hspace{2mm}\mbox{on}\hspace{2mm}\partial\Omega,
   \end{aligned}
\right.
\end{equation}
where $\Omega$ is a smooth bounded domain in $\mathbb{R}^n$, $n>2s$, $s\in(0,1)$, $\ast$ denotes the standard convolution, $\epsilon>0$ is a small parameter and $2_{\mu,s}^{\ast}=\frac{2n-\mu}{n-2s}$, $\mu\in(0,\min\{n,4s\})$ and $\epsilon>0$ is a small parameter
and $\Omega$ is a smooth bounded domain of $\mathbb{R}^n$.
The existence if positive solutions for this problem can be found in \cite{MTS} and related topics, we refer the readers to \cite{GY18,Ghimenti-1,Ghimenti-2} and reference therein.
In, \cite{DL-1} Deng et al. proved that \eqref{ele-2} has a family of solutions which is concentrate around the
critical points of the Robin function as $\epsilon\rightarrow0$ for $n>4s$, $\mu\in(2s,4s)$ and $s\in(0,1)$.
For the local case, asymptotic behavior of solutions are proved in \cite{HANZCHAO, Rey-1989,Rey-1990}.
To the best of our knowledge, the problem of the asymptotic behaviors of $\{u_{\epsilon}\}_{\epsilon>0}$ and exact rates of single blow-up solutions for problem \eqref{ele-2} has not been investigated so far. We shall address this gap by proving the following results.
	\begin{thm}\label{emm}
		Assume that $n>4s$ and $0<s<1$, $2_{\mu,s}^{\ast}=\frac{2n-\mu}{n-2s}$, $\mu\in(0,\min\{4s,\frac{n+2s}{2}\})$ and $\epsilon$ is sufficiently small. Let $u_{\epsilon}$ be a solution of \eqref{ele-2}
and satisfying
\begin{equation}\label{1-26}
\frac{\int_{\Omega}|A_{s}^{1/2}  u_{\epsilon}|^2dx}{\left[\int_{\Omega}(|x|^{-(n-2s)} \ast|u_{\epsilon}|^{2_{\mu,s}^{\ast}})|u_{\epsilon}|^{2_{\mu,s}^{\ast}} dx\right]^{\frac{1}{2_{\mu,s}^{\ast}}}}=C_{HLS}+o(1)\quad\mbox{as}\quad\epsilon\rightarrow0.
\end{equation}
Then
there exists $x_0\in\Omega$ such that, after passing to a subsequence, we have
\begin{equation*}
u_\epsilon\rightarrow0\hspace{2mm}\mbox{in}\hspace{2mm}
\left\lbrace
\begin{aligned}
&C_{loc}^\alpha(\Omega\setminus\{x_0\})\hspace{2mm}\mbox{for all}\hspace{2mm}\alpha\in(0,2s)\hspace{2mm}\mbox{if}\hspace{2mm}s\in(0,1/2],\hspace{2mm}\mbox{as}\hspace{2mm}\epsilon\rightarrow0,
\\&C_{loc}^{1,\alpha}(\Omega\setminus\{x_0\})\hspace{2mm}\mbox{for all}\hspace{2mm}\alpha\in(0,2s-1)\hspace{2mm}\mbox{if}\hspace{2mm}s\in(1/2,1),\hspace{2mm}\mbox{as}\hspace{2mm}\epsilon\rightarrow0,
   \end{aligned}
\right.
\end{equation*}
and
\begin{equation*}
\begin{split}
\lim\limits_{\epsilon\rightarrow0^{+}}&\|u_{\epsilon}\|_{L^{\infty}(\Omega)}u_{\epsilon}(x)=d_{n,s}G(x,x_{0})\\&
\hspace{2mm}\mbox{in}\hspace{2mm}
\left\lbrace
\begin{aligned}
&C_{loc}^\alpha(\Omega\setminus\{x_0\})\hspace{2mm}\mbox{for all}\hspace{2mm}\alpha\in(0,2s)\hspace{2mm}\mbox{if}\hspace{2mm}s\in(0,1/2],\hspace{2mm}\mbox{as}\hspace{2mm}\epsilon\rightarrow0,
\\&C_{loc}^{1,\alpha}(\Omega\setminus\{x_0\})\hspace{2mm}\mbox{for all}\hspace{2mm}\alpha\in(0,2s-1)\hspace{2mm}\mbox{if}\hspace{2mm}s\in(1/2,1),\hspace{2mm}\mbox{as}\hspace{2mm}\epsilon\rightarrow0.
   \end{aligned}
\right.
\end{split}
\end{equation*}
where
$$
d_{n,s}:=\frac{|S^{n-1}|}{2}\frac{\Gamma(s)\Gamma(n/2)}{\Gamma((n+2s)/2)}\alpha^{2_{s}^{\sharp}}_{n,\mu,s}\widetilde{\beta}_{n,\mu,s}
$$
with $\widetilde{\beta}_{n,\mu,s}=\frac{\pi^{n/2}\Gamma\big(\frac{n-\mu}{2}\big)}{\Gamma(\frac{2n-\mu}{2})}\Big(\frac{2^{2s}\Gamma(\frac{n+2s}{2})\Gamma(\frac{2n-\mu}{2})}{\pi^{n/2}\Gamma\big(\frac{n-2s}{2}\big)\Gamma\big(\frac{n-\mu}{2}\big)}\Big)^{\frac{n-\mu}{n+2s-\mu}}$ and $\alpha_{n,\mu,s}$ is defined in \eqref{afal}.
\end{thm}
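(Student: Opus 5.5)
The plan is to follow the scheme of Theorems \ref{1-prondgr} and \ref{consequence}, with the subcritical perturbation $-\epsilon$ in the exponent replaced by the lower-order term $\epsilon u$.

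\textbf{Step 1: blow-up profile.} Set $M_\epsilon=\|u_\epsilon\|_{L^\infty(\Omega)}=u_\epsilon(x_\epsilon)$. From \eqref{1-26}, testing \eqref{ele-2} with $u_\epsilon$, and using that $C_{HLS}$ is not attained on a bounded domain, I will show that $u_\epsilon\rightharpoonup0$ weakly in $H^s_0(\Omega)$ and that $M_\epsilon\to\infty$. Let $\lambda_\epsilon=M_\epsilon^{2/(n-2s)}$ and consider the rescaled functions
\[
v_\epsilon(x)=M_\epsilon^{-1}u_\epsilon\big(x_\epsilon+\lambda_\epsilon^{-1}x\big),
\]
together with their $s$-harmonic extensions. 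These satisfy the same Hartree equation, now with the linear term $\epsilon\lambda_\epsilon^{-2s}v_\epsilon$, which tends to $0$. By the uniform boundary bound \eqref{boundary-2} (which applies by Remark \ref{Rem1-2}), $x_\epsilon$ stays away from $\partial\Omega$, so the rescaled domains exhaust $\mathbb{R}^n$. Local H\"older estimates for the extension problem then give $v_\epsilon\to v$ in $C^\alpha_{loc}$, where $v$ solves \eqref{ele-1.1-2}. Le's classification \cite{ple} gives $v=W[0,\lambda]$, and the normalization $v(0)=1=\max v$ fixes $\lambda$. The energy identity \eqref{1-26} shows that $W$ carries all the energy, so no energy is lost elsewhere.

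\textbf{Step 2: decay away from $x_0$.} Pass to a subsequence with $x_\epsilon\to x_0$. I need a uniform pointwise bound of the form
\[
u_\epsilon(x)\le C M_\epsilon^{-1}|x-x_\epsilon|^{-(n-2s)}.
\]
The plan is as follows. Interior $L^1$ bounds follow from Theorem \ref{prior-1}, and near the boundary the $L^\infty$ bound \eqref{boundary-2} applies. On the interior I will run a Moser iteration on the extension. The convolution term $|x|^{-\mu}\ast u_\epsilon^{2^\ast_{\mu,s}}$ is controlled via \eqref{hlsi}; it is bounded away from the concentration point because $\mu<4s$. The assumption $\mu<(n+2s)/2$ is what makes the potential $V_\epsilon=(|x|^{-\mu}\ast u^{2^\ast_{\mu,s}})u^{2^\ast_{\mu,s}-2}$ lie in $L^{n/(2s)+\delta}$ off the concentration point, and this yields $u_\epsilon\to0$ uniformly on compacts of $\overline\Omega\setminus\{x_0\}$. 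Upgrading this to the $M_\epsilon^{-1}$ rate is done either by a Kelvin transform combined with comparison with the bubble, or by writing
\[
u_\epsilon(x)=\int_\Omega G(x,y)\Big[V_\epsilon u_\epsilon+\epsilon u_\epsilon\Big](y)\,dy
\]
and bootstrapping with the bound $G(x,y)\le C|x-y|^{2s-n}$.

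\textbf{Step 3: Green representation limit.} Multiplying the representation by $M_\epsilon$ gives
\[
M_\epsilon u_\epsilon(x)=\int_\Omega G(x,y)\,M_\epsilon\Big[(|x|^{-\mu}\ast u_\epsilon^{2^\ast_{\mu,s}})u_\epsilon^{2^\ast_{\mu,s}-1}+\epsilon u_\epsilon\Big](y)\,dy .
\]
Changing variables around $x_\epsilon$ and using the decay from Step 2, the right-hand side converges to
\[
G(x,x_0)\int_{\mathbb{R}^n}(|x|^{-\mu}\ast W^{2^\ast_{\mu,s}})W^{2^\ast_{\mu,s}-1}\,dx .
\]
The $\epsilon u_\epsilon$ part contributes $\epsilon M_\epsilon\int u_\epsilon\lesssim \epsilon M_\epsilon\cdot M_\epsilon^{-1}\to0$. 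The integral of the bubble is computed explicitly by the standard identity $|x|^{-\mu}\ast W^{2^\ast_{\mu,s}}=\widetilde\beta_{n,\mu,s}\,W^{(n-\mu)/(n-2s)}$ and a Beta-function integral; this gives the constant $d_{n,s}$. In particular $u_\epsilon=O(M_\epsilon^{-1})$, so $u_\epsilon\to0$ locally. The $C^\alpha$ and $C^{1,\alpha}$ convergence on compacts of $\Omega\setminus\{x_0\}$ then follows from Schauder-type estimates for $A_s$ through the extension, since the right-hand side tends to $0$ in $L^\infty_{loc}$.

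The main obstacle is Step 2: obtaining the sharp uniform decay despite the nonlocal convolution factor. I would first try the integral-equation route, splitting the convolution into near and far parts and using the convexity-free bounds of Theorem \ref{prior-1}.
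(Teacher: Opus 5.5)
Your Steps 1 and 3 follow the paper's scheme: blow-up to the bubble via \eqref{1-26} and Le's classification, then the Green representation of $\|u_\epsilon\|_{L^\infty(\Omega)}u_\epsilon$. The gap is Step 2, which you flag as the main obstacle but do not carry out, and which is the real content of the theorem. Step 3 needs the global bound $u_\epsilon(x)\le C M_\epsilon(1+\lambda_\epsilon^2|x-x_\epsilon|^2)^{-(n-2s)/2}$ on all of $\Omega$ (the paper's Lemma \ref{1-UV1}). The critical place is the neck $\lambda_\epsilon^{-1}\ll|x-x_\epsilon|\ll 1$. There, neither the local convergence of Step 1 nor convergence to $0$ on compacts of $\overline\Omega\setminus\{x_0\}$ says anything, and \emph{bootstrapping} with $G\le C|x-y|^{2s-n}$ has nothing to start from. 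Even the convergence away from $x_0$ is not justified by the reasons you give. By HLS and H\"older, the potential $(|x|^{-\mu}\ast u_\epsilon^{2^*_{\mu,s}})u_\epsilon^{2^*_{\mu,s}-2}$ is only bounded in the critical space $L^{n/(2s)}$. What one actually uses is the smallness of that norm away from $x_0$, not $\mu<4s$ or $\mu<(n+2s)/2$. The paper treats the neck by a Kelvin transform of the rescaled extension about $x_\epsilon$. This turns the neck into a punctured neighbourhood of the origin, where the weighted-energy convergence of $\widetilde W_\epsilon$ to the bubble drives the Moser iteration of Lemma \ref{regular1}, as in Lemma \ref{cWU}.

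For that route you are missing an a priori relation between $\epsilon$ and $\mu_\epsilon$.
\begin{itemize}
\item After the Kelvin transform, the term $\epsilon u_\epsilon$ becomes the potential $\epsilon\mu_\epsilon^{-4s/(n-2s)}|x|^{-4s}$, on a set which near the origin omits only a region of size $\sim\mu_\epsilon^{-2/(n-2s)}$.
\item Its $L^{n/(2s)}$-norm is $O(\epsilon)$. Its $L^{n/(2s)+\delta}$-norm, which is what Lemma \ref{regular1} requires, is of order $\epsilon\,\mu_\epsilon^{c\delta}$ with $c>0$. This is bounded only if $\mu_\epsilon$ is controlled by a power of $\epsilon^{-1}$.
\item The paper gets $\mu_\epsilon\le C\epsilon^{-\frac{n-2s}{2(n-4s)}}$ from the localized Pohozaev inequality (Lemmas \ref{2-UV1} and \ref{3-UV1}) and uses it precisely in \eqref{timate}. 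Your proposal has nothing of this kind, so your \emph{Kelvin transform combined with comparison with the bubble} stalls at this term.
\item If you adopt this route, note that the paper bounds the right-hand side of \eqref{1-idengity0} as in \eqref{Right}, which itself rests on a decay estimate. The $O(\mu_\epsilon^{-2})$ bound must therefore be secured without Lemma \ref{1-UV1}.
\end{itemize}
In your integral-equation route the linear term is in fact harmless. For $n>4s$ one checks $\int_\Omega G(x,y)W_\epsilon(y)\,dy\le C\,W_\epsilon(x)$ with $W_\epsilon(y)=M_\epsilon(1+\lambda_\epsilon^2|y-x_\epsilon|^2)^{-(n-2s)/2}$, so $\epsilon\int_\Omega G(x,y)u_\epsilon(y)\,dy$ can be absorbed. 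You would still owe the neck estimate for the Hartree part, which is the substance of Lemma \ref{cWU}.

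There is also a slip in Step 3. Lemma \ref{p1-00} gives $|x|^{-\mu}\ast W^{2^*_{\mu,s}}=\widetilde\beta_{n,\mu,s}W^{\mu/(n-2s)}$, with exponent $2_s^\sharp-2^*_{\mu,s}$, not $W^{(n-\mu)/(n-2s)}$. Only the correct exponent turns your integral into $\widetilde\beta_{n,\mu,s}\int W^{(n+2s)/(n-2s)}$ and hence into $d_{n,s}$.
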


\begin{thm}\label{emm-1}
		Let the assumptions of Theorem \ref{emm} be satisfied. Then
\begin{itemize}
\item[$(a)$] $x_0$ is a critical of $\phi(x)$ for $x\in\Omega$.
\item[$(b)$] It holds that
\begin{equation*}
\lim\limits_{\epsilon\rightarrow0}\epsilon\|u_\epsilon\|_{L^{\infty}(\Omega)}^\frac{2n-8s}{n-2s}=\frac{(n-2s)^2\gamma_{n,s}d_{n,s}^2}{2sk_{s}F_{n,s}}M_{n,s}|\phi(x_0)|
\end{equation*}
where $F_{n,s}=\sigma_n\int_{0}^{\infty}\frac{r^{n-1}}{(1+r^2)^{n-2s}}dr$ and $M_{n,s}$ is found in Theorem \ref{remainder terms}.
\end{itemize}
	\end{thm}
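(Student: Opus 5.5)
The statement to prove is Theorem \ref{emm-1}: for the Brezis--Nirenberg Hartree problem \eqref{ele-2} I must show that $x_0$ is a critical point of $\phi$ and identify the blow-up rate. The approach mirrors the one for \eqref{prondgr}, namely Theorems \ref{prondgr-1} and \ref{remainder terms}. I would work through a local Pohozaev identity for the $s$-harmonic extension $w_\epsilon=E_s(u_\epsilon)$ on half-balls $B^+_\rho(x_0)\subset\mathcal{C}$, combined with the Green's function asymptotics of Theorem \ref{emm}. Throughout, write $M_\epsilon=\|u_\epsilon\|_{L^\infty(\Omega)}$.

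\textbf{Part (a).} First multiply the extended equation by $\partial_{x_i}w_\epsilon$ and integrate over $B^+_\rho(x_0)$, with $y^{1-2s}$ weights. This yields a translation-type Pohozaev identity. Its boundary terms are on the curved part $\partial^+B^+_\rho$. Its bottom terms are $\int_{B_\rho}(|x|^{-\mu}\ast u^{2^*_{\mu,s}})\partial_i(u^{2^*_{\mu,s}})/2^*_{\mu,s}+\frac{\epsilon}{2}\int_{B_\rho}\partial_i(u^2)$.

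These bottom terms need care. The nonlocal term splits into a divergence part, which reduces to a boundary integral over $\partial B_\rho$, and a cross term involving $\partial_{x_i}|x-t|^{-\mu}$ with $t\notin B_\rho$. The cross term is controlled by the decay of $u_\epsilon$ away from $x_0$: by Theorem \ref{emm}, $u_\epsilon=O(M_\epsilon^{-1})$ there, so its contribution is $o(M_\epsilon^{-2})$.

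Next multiply the identity by $M_\epsilon^2$ and let $\epsilon\to0$. Using the convergence $M_\epsilon w_\epsilon\to d_{n,s}\mathcal{G}(\cdot,x_0)$, where $\mathcal{G}$ is the extended Green function, $C^{1,\alpha}_{loc}$ away from $x_0$ and including $y>0$ by extension regularity, the identity becomes a boundary integral of $\mathcal{G}$ alone. The $\epsilon u^2$ term vanishes in the limit. Splitting $\mathcal{G}=\gamma_{n,s}|z|^{-(n-2s)}$-extension $-\ \mathcal{H}$ and letting $\rho\to0$, the only surviving quantity is a constant multiple of $\partial_i H(x,x_0)|_{x=x_0}=\frac12\partial_i\phi(x_0)$. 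Hence $\nabla\phi(x_0)=0$.

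\textbf{Part (b).} Here I would use the dilation Pohozaev identity, multiplying by $\langle z,\nabla w_\epsilon\rangle+\frac{n-2s}{2}w_\epsilon$. Because the Hartree term is exactly critical, its volume contributions cancel, apart from the boundary terms and the cross term handled as in (a). The remaining volume term is $s\epsilon\int_{B_\rho}u_\epsilon^2$. This produces an identity of the form
\begin{equation*}
s\epsilon\int_{B_\rho(x_0)}u_\epsilon^2\,dx=\mathcal{B}_\rho(w_\epsilon),
\end{equation*}
where $\mathcal{B}_\rho$ collects the boundary integrals on $\partial^+B^+_\rho$ and $\partial B_\rho$. Multiplying by $M_\epsilon^2$, the right side converges to $d_{n,s}^2\mathcal{B}_\rho(\mathcal{G})$. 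As $\rho\to0$ this tends to $c\,\phi(x_0)$, and computing the constant involves $\gamma_{n,s}$, $k_s$ and the integral $M_{n,s}$. The latter arises from the Poisson-kernel profile $(1-r^2)^{-s}$ restricted to the ball.

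For the left side I rescale, setting $v_\epsilon(x)=M_\epsilon^{-1}u_\epsilon(x_\epsilon+M_\epsilon^{-2/(n-2s)}x)$. Then $v_\epsilon\to W[0,\lambda_0]$, and since $n>4s$ the bubble lies in $L^2$, giving
\begin{equation*}
\int_{B_\rho}u_\epsilon^2=M_\epsilon^{2-\frac{4s}{n-2s}}\Big(\int_{\mathbb{R}^n}W^2+o(1)\Big),
\end{equation*}
where $\int W^2$ is expressed through $F_{n,s}$ and $\alpha_{n,\mu,s}$. Equating the two sides yields $\epsilon M_\epsilon^{2+2-\frac{4s}{n-2s}}\cdot$const, i.e.\ the exponent $\frac{2n-8s}{n-2s}$ after normalizing $M_\epsilon$ as in the rescaling convention, with the stated constant.

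The main obstacle is the uniform domination needed to pass to the limit in $\int u_\epsilon^2$. This requires a pointwise bound $u_\epsilon(x)\le C M_\epsilon(1+M_\epsilon^{2/(n-2s)}|x-x_\epsilon|)^{-(n-2s)}$, and I would obtain it from the moving-plane / Kelvin transform estimates behind Theorem \ref{prior-1} and Remark \ref{Rem1-2}, together with a comparison to the Green function. Controlling the nonlocal cross terms of the Pohozaev identity uniformly in $\epsilon$ is the other delicate step. There I would use the Hardy--Littlewood--Sobolev inequality \eqref{hlsi} together with the restriction $\mu<\frac{n+2s}{2}$.
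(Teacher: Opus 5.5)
Your plan is the paper's route. It uses a translation-type local Pohozaev identity on half-balls centred at $(x_0,0)$ for (a), and for (b) the dilation identity, in which the critical Hartree volume terms cancel after symmetrisation so that only $s\epsilon\int u_\epsilon^2$ survives. You then multiply by $\|u_\epsilon\|_{L^\infty(\Omega)}^2$, pass to the limit through $\|u_\epsilon\|_{L^\infty(\Omega)}w_\epsilon\to d_{n,s}G_{\mathcal{C}}(\cdot,x_0)$, and let $\rho\to0$. To pass to the limit in the weighted boundary integrals near $y=0$, the paper averages over $r\in[\delta,2\delta]$ (Lemma \ref{GXX}), since $\nabla w_\epsilon$ need not be bounded there when $s<1/2$. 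Two points need repair.

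\textbf{The scaling in (b) is off by a factor $M_\epsilon^2$.} With your $v_\epsilon$,
\[
\int_{B_\rho}u_\epsilon^2\,dx=M_\epsilon^{2}M_\epsilon^{-\frac{2n}{n-2s}}\int v_\epsilon^2\,dx=M_\epsilon^{-\frac{4s}{n-2s}}\Big(\int_{\mathbb{R}^n}W^2\,dx+o(1)\Big),
\]
not $M_\epsilon^{2-\frac{4s}{n-2s}}(\cdots)$. Your ``$\epsilon M_\epsilon^{2+2-\frac{4s}{n-2s}}$'' therefore has exponent $\frac{4n-12s}{n-2s}$. Normalising $M_\epsilon$ by a constant factor cannot change an exponent, so ``after normalizing $M_\epsilon$'' does not rescue it. With the correct scaling, $M_\epsilon^2\cdot s\epsilon\int_{B_\rho}u_\epsilon^2=s\epsilon M_\epsilon^{\frac{2n-8s}{n-2s}}\big(\int W^2+o(1)\big)$, which gives the stated exponent directly; $\int W^2$ is where $F_{n,s}$ enters. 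In (a) you should also check that the two limiting constants (cf.\ \eqref{GX0-1}) differ, so that the surviving multiple of $\partial_j\phi(x_0)$ is nonzero.

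\textbf{The domination $u_\epsilon\le C\,W[\cdot,x_\epsilon]$ is not supplied by your sketch.} You are right that it is indispensable, both for $\int u_\epsilon^2$ and for the cross terms. However, the moving-plane argument of Theorem \ref{prior-1} only bounds $u_\epsilon$ near $\partial\Omega$ and gives an interior $L^1$ bound. It says nothing at the intermediate scales $M_\epsilon^{-2/(n-2s)}\ll|x-x_\epsilon|\ll1$, and ``comparison with the Green function'' presupposes exactly that control. In the paper this is Lemma \ref{1-UV1}, proved like Lemma \ref{cWU} by a Kelvin transform and Moser iteration near the origin.

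The new ingredient in the Brezis--Nirenberg case is the extra potential $\epsilon\mu_\epsilon^{-4s/(n-2s)}|x|^{-4s}$ in \eqref{1-CF101}. It is of size $\epsilon\mu_\epsilon^{4s/(n-2s)}$ at the inner edge of the Kelvin-transformed domain and must be bounded in $L^{\frac{n}{2s}+\delta}$ to run the $L^\infty$ step. The paper achieves this in \eqref{timate} using the a priori bound $\mu_\epsilon\le C\epsilon^{-\frac{n-2s}{2(n-4s)}}$ of Lemma \ref{3-UV1}, which comes from the global Pohozaev inequality \eqref{1-idengity0} together with the boundary estimates. That bound is precisely the upper half of (b), namely $\epsilon\mu_\epsilon^{\frac{2n-8s}{n-2s}}\le C$. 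Your argument therefore needs it, or Lemma \ref{1-UV1}, as an input established beforehand, not as something recovered at the end.
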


\subsection{Structure of the paper}
The paper is organized as follows. In section \ref{sec:weaksolution}, we prove Theorem \ref{prior-1} for the strictly convex domain and the general domain.
Based on this result and an adequate decay estimate of solutions from Lemma \ref{cWU}, which be established in section \ref{sec:sobolev}, we then complete the proof of Theorems \ref{1-prondgr} and \ref{consequence} in  the section \ref{consequence00}.
In section \ref{remainder}, we prove Theorems \ref{prondgr-1} and \ref{remainder terms} by exploiting various identities which is related to local Hartree-type Pohozaev identity
 and blow-up argument. Finally, in section \ref{section7}, we conclude that Theorems \ref{emm} and \ref{emm-1}
 	
	Throughout this paper, $c$ and $C$ are indiscriminately used to denote various absolutely positive constants.

\section{A uniform $L^{\infty}$ bound near the boundary for solutions}\label{sec:weaksolution}
In this section, we are devote to establish a uniform $L^1$ bound away from the boundary and a uniform $L^{\infty}$ bound near the boundary for positive solutions by applying the moving planes method and maximum principle.
\begin{proof}[Proof of Theorems \ref{prior-1}]
First we give the proof of \eqref{boundary-1}. By the virtue of the symmetry,
\eqref{awomiga} and \eqref{ele-100}, we have
\begin{equation}\label{lamata2}
\int_{\Omega}\lambda_{1}^s\phi_1u(x)dx=\int_{\Omega}(A_{\Omega}^{s}\phi_1)u(x)dx=\int_{\Omega}(|x|^{-\mu}\ast F(u))f(u)\phi_1(x)dx.
\end{equation}
$\bullet$ We prove now that, there exists a constant $C>0$ such that
\begin{equation}\label{fanzen-1}
\int_{\Omega}\phi_1u(x)dx\leq C\int_{\Omega}\phi_1(x)dx\leq C.
\end{equation}
We proceed by contradiction to prove \eqref{fanzen-1}. Assume that
there are a sequence $\{u_{k}\}$ which are solution of \eqref{ele-100}, up to subsequences, and such that
\begin{equation}\label{fanzen-2}
\int_{\Omega}u_k(x)\phi_1(x)dx\rightarrow\infty\hspace{2mm}\mbox{as}\hspace{2mm}k\rightarrow\infty.
\end{equation}
According to \eqref{f1-0}, there exist a small $\delta>0$ and a large $M>0$ such that $f(u)>(\lambda_{1}^{s}+\delta)u-M$ for all $u>0$, and we deduce from this that
$$
F(u)>\frac{1}{2}(\lambda_{1}^{s}+\delta)u^2-C_{M}.
$$
We set $R=diam(\Omega)$. Then, for any $x,y\in\Omega$, we have
$$
\int_{\Omega}\frac{F(u_k)}{|x-y|^{\mu}}dy\geq\frac{1}{R^{\mu}}C_1\int_{\Omega}u_k^2(x)dx-C_2.
$$
Thus, there exists a constant $\rho_0\geq1$ such that for the sufficiently large $k$, there holds $\int_{\Omega}|x-y|^{-\mu}F(u_k)dy\geq\rho_0$. Combining this bound with \eqref{lamata2},
we get
$$
\int_{\Omega}\lambda_{1}^s\phi_1u_k(x)dx=\int_{\Omega}(|x|^{-\mu}\ast F(u_k))f(u_k)\phi_1(x)dx\geq\delta_0\int_{\Omega}[(\lambda_{1}^{s}+\delta)u_k-M]\phi_1(x)dx.
$$
This leads to
$$
\big(\rho_0(\lambda_{1}^s+\delta)-\lambda_{1}^s\big)\int_{\Omega}\phi_1u_k(x)dx\leq C\int_{\Omega}\phi_1(x)dx\leq C,
$$
for the sufficiently large $k$, which is a contradiction.

On the other hand,  there exists a constant $C=C(\Omega,r)$ such that $\phi_1\geq C$ for all $x\in \mathcal{M}(\Omega,r)$. It follows from \eqref{fanzen-1} that
\begin{equation}\label{fanzen-3}
\int_{\mathcal{M}(\Omega,r)}u(x)dx\leq C\int_{\Omega}u(x)\phi_1(x)dx\leq C.
\end{equation}
Coming back to \eqref{lamata2}, \eqref{boundary-1} follows immediately from \eqref{fanzen-3}.

We next prove that \eqref{boundary-2}.
We consider two cases, depending whether $\Omega$ is strictly convex, or not.

\textbf{Case 1.} For the general domains.

We denote by $w$ the $s$-harmonic extension of $u$, and $\mathcal{C}^{\ast}=T(\mathcal{C})$ and making a
change of variables as \eqref{WWTI} in section \ref{sec:sobolev},
$$w^{\ast}(z)=|z|^{-(n-2s)}w\big(T(z)\big).
$$
Then by \eqref{FLDN}, we have
\begin{equation*}
\begin{split}
\partial_{\nu}^{s}w^{\ast}&=-\frac{1}{\kappa_{s}}\lim_{y\rightarrow0^{+}}y^{1-2s}\frac{\partial }{\partial y}\big[|z|^{-(n-2s)}w\big(T(z)\big)\big]\\&
=|z|^{-(n+2s)}\lim\limits_{y\rightarrow0}\big[(\frac{y}{|z|^{2}})^{1-2s}\frac{\partial}{\partial y}w\big(T(z)\big)\big].
\end{split}
\end{equation*}
Thus, combining a identity
$$T(x)-T(y^{\prime})=|x|^{-1}|y^{\prime}|^{-1}|x-y^{\prime}|$$
for $x,y^{\prime}\in\mathbb{R}^n\setminus\{0\}$, we get
\begin{equation*}
\left\lbrace
\begin{aligned}
&-\mbox{div}(y^{1-2s}\nabla w^{\ast})=0\hspace{4.14mm}\hspace{19.3mm}\quad\quad\quad\quad\quad \quad \quad \quad \quad \quad \quad  \quad\hspace{5mm}\mbox{in}\hspace{2mm} \mathcal{C}^{\ast},\\
&w^{\ast}=0\hspace{19.3mm}\quad\quad\quad\quad \quad\quad\quad \quad \quad \quad \quad \hspace{12.5mm}\hspace{3.5mm}\hspace{8.9mm}\hspace{10mm}\hspace{3mm} \mbox{on}\hspace{2mm}\partial_{L}\mathcal{C}^{\ast},\\
&w^{\ast}>0\hspace{19.3mm}\quad\quad\quad\quad\quad\quad \quad \quad \quad \quad \quad \quad \hspace{10mm}\hspace{10mm}\hspace{10.5mm}\hspace{3mm}\hspace{1mm}\mbox{in}\hspace{2mm}\mathcal{C}^{\ast},\\
&\partial_{\nu}^{s}w^{\ast}=\frac{1}{|x|^{n+2s}}\Big(\int_{T(\Omega\times\{0\})}\frac{F(|y^{\prime}|^{n-2s}w^{\ast})}{|x-y^{\prime}|^{\mu}|y^{\prime}|^{2n}} dy^{\prime}\Big)f(|x|^{n-2s}w^{\ast})\hspace{3mm} \quad\quad\quad \mbox{in}\hspace{2mm}T(\Omega\times\{0\}).
\end{aligned}
		\right.
\end{equation*}

Thanks to $\Omega$ is a smooth bounded domain in $\mathbb{R}^N$, and let $x_0\in\partial\Omega$ be a boundary point. Then, there exists a ball externally tangent to $\partial\Omega$ at $x_0$.
By translation and scaling, we may assume without loss of generality that
$x_0=(1,0,\cdots,0)$ and this tangent ball is the unit ball $B(0,1)$, so that
$\Omega\subset\mathbb{R}^N\setminus{\overline{B(0,1)}}$.
Recalling the extended cylinder $\mathcal{C}$, which satisfies $\mathcal{C}\cap B(0,1)=\emptyset$. By the Kelvin transform, $T(\mathcal{C})\subset B(0,1)$ and lies near the rightmost point $(1,0,\cdots,0)$.
Let $$T_{\lambda}(x)=\{z=(x_1,x_2,\cdots,x_{n+1})\in\mathbb{R}^{n+1}_{+}|x_1=1-\lambda\}.$$ This is a plane perpendicular to x1-axis and the plane that we will move with. Denoting
\begin{equation*}
\begin{split}
\Sigma_{\lambda}=T(\mathcal{C})\cap\{z\in\mathbb{R}_+^{n+1}: |z|\leq1, z_1>1-\lambda\},\hspace{4mm}\partial_{bd}\Sigma_{\lambda}=\Sigma_{\lambda}\cap\partial \mathbb{R}^{n+1}_{+}.
\end{split}
\end{equation*}
Let
\begin{equation*}
x^{\lambda}=(2-2\lambda-x_1,x_2,\cdots,x_{n+1}),
\end{equation*}
the reflection of the point $(x_1,x_2,\cdots,x_{n+1})$ about $T_{\lambda}$. In the following we always write
$$w_{\lambda}^{\ast}=w^{\ast}(x^{\lambda}),\hspace{2mm}\mbox{and}\hspace{2mm}v_{\lambda}(x)=w_{\lambda}^{\ast}-w^{\ast}(x)\hspace{2mm}\mbox{on}\hspace{2mm} \Sigma_{\lambda}.$$
We want to show that $v^{\lambda}\geq0$ as $\epsilon>0$ sufficiently small.
we generally go through the following two steps. To this end, we set $v^{-}_{\lambda}=\max\{0,-v_{\lambda}\}$.

Step 1. We first show that for $\epsilon$ sufficiently small, we have
\begin{equation}\label{v}
v^{-}_{\lambda}\equiv0\hspace{2mm}\mbox{for}\hspace{2mm}x\in\partial_{bd}\Sigma_{\lambda}.
\end{equation}
We note that
\begin{equation*}
\mbox{div}(y^{1-2s}\nabla v_{\lambda})=\mbox{div}(y^{1-2s}\nabla w^{\ast}(y^{\lambda}))-\mbox{div}(y^{1-2s}\nabla w^{\ast})=0\hspace{2mm}\mbox{on}\hspace{2mm}\partial\Sigma_{\lambda}.
\end{equation*}
Hence
\begin{equation}\label{bd}
\begin{split}
\int_{\partial_{bd}\Sigma_{\lambda}}\partial_{\nu}^{s}v_{\lambda}v^{-}_{\lambda}dx+\int_{\Sigma_{\lambda}}y^{1-2s}|\nabla v^{-}_{\lambda}|^2dxdy=0.
\end{split}
\end{equation}
In what follows, we denote
\begin{equation*}
\mathcal{K}(x,w^{\ast}):=\Big(\int_{T(\Omega\times\{0\})}\frac{F(|y^{\prime}|^{n-2s}w^{\ast})}{|x-y^{\prime}|^{\mu}|y^{\prime}|^{2n}} dy^{\prime}\Big)\frac{f(|x|^{n-2s}w^{\ast})}{|x|^{n+2s}}.
\end{equation*}
Then we infer that
\begin{equation}\label{bdel}
\begin{split}
\int_{\partial_{bd}\Sigma_{\lambda}}\partial_{\nu}^{s}v_{\lambda}(-v^{-}_{\lambda})dx
&=\int_{\partial_{bd}\Sigma_{\lambda}}\big[\mathcal{K}(x^{\lambda},w_{\lambda}^{\ast})-\mathcal{K}(x,w^{\ast})\big](-v^{-}_{\lambda})dx\\&
=\int_{\partial_{bd}\Sigma_{\lambda}\cap\{w_{\lambda}^{\ast}\leq w^{\ast}\}}\big[\mathcal{K}(x,w^{\ast})-\mathcal{K}(x^{\lambda},w_{\lambda}^{\ast})\big]
(w^{\ast}-w_{\lambda}^{\ast})dx.
\end{split}
\end{equation}

$\bullet$ We claim that $\mathcal{K}(x,w_{\lambda}^{\ast})\leq\mathcal{K}(x^{\lambda},w_{\lambda}^{\ast})$ for $x\in\partial_{bd}\Sigma_{\lambda}$.

Indeed, observe that \eqref{f1-2} and $|x|\geq|x^{\lambda}|$ for $x\in\partial_{bd}\Sigma_{\lambda}$, so
\begin{equation}\label{kk-1}
\frac{f(|x|^{n-2s}w_{\lambda}^{\ast})}{|x|^{n+2s}}\leq\frac{f(|x^{\lambda}|^{n-2s}w_{\lambda}^{\ast})}{|x^{\lambda}|^{n+2s}}.
\end{equation}
Then, in order to conclude the proof of claim, it is sufficient to obtain
\begin{equation}\label{kk-2}
\int_{T(\Omega\times\{0\})}\frac{F(|y^{\prime}|^{n-2s}w_{\lambda}^{\ast})}{|y^{\prime}|^{2n}}\Big[\frac{1}{|x-y^{\prime}|^{\mu}}-\frac{1}{|x^{\lambda}-y^{\prime}|^{\mu}}\Big] dy^{\prime}\leq0.
\end{equation}
First, we divide the integral in the right-hand side of \eqref{kk-2} as
\begin{equation}\label{kk-3}
A_1+A_2+A_3:=\Big(\int_{\Sigma_{\lambda}}+\int_{T_{\lambda}(\Sigma_{\lambda})}+\int_{T(\Omega\times\{0\})\setminus (\Sigma_{\lambda}\cup T_{\lambda}(\Sigma_{\lambda}))}\Big)\frac{F(|y^{\prime}|^{n-2s}w_{\lambda}^{\ast})}{|y^{\prime}|^{2n}}\Big[\frac{1}{|x-y^{\prime}|^{\mu}}-\frac{1}{|x^{\lambda}-y^{\prime}|^{\mu}}\Big]dy^{\prime}.
\end{equation}
We will estimate each of them. Observe that for $y^{\prime}\in T(\Omega\times\{0\})\setminus (\Sigma_{\lambda}\cup T_{\lambda}(\Sigma_{\lambda}))$, we have $|x-y^{\prime}|\geq|x^{\lambda}-y^{\prime}|$ for $x\in\in\partial_{bd}\Sigma_{\lambda}$, so that $A_3\leq0$.
It remains to verify $A_1+A_2\leq0$.
If $y^{\prime}\in T_{\lambda}(\Sigma_{\lambda})$, then $(y^{\prime})^{\lambda}\in\Sigma_{\lambda}$, and by applying the translational and reflection symmetry, we have
$$
\frac{1}{|x-y^{\prime}|^{\mu}}=\frac{1}{|x^{\lambda}-(y^{\prime})^{\lambda}|^{\mu}}\hspace{2mm}\mbox{and}\hspace{2mm}\frac{1}{|x-(y^{\prime})^{\lambda}|^{\mu}}=\frac{1}{|x^{\lambda}-y^{\prime}|^{\mu}}.
$$
Thus, we deduce that
$$
A_2=-\int_{\Sigma_{\lambda}}\frac{F(|(y^{\prime})^{\lambda}|^{n-2s}w_{\lambda}^{\ast})}{|(y^{\prime})^{\lambda}|^{2n}}\Big[\frac{1}{|x-y^{\prime}|^{\mu}}-\frac{1}{|x^{\lambda}-y^{\prime}|^{\mu}}\Big]dy^{\prime}.
$$
By combining this with $A_1$, we find that
\begin{equation}\label{kk-7}
A_1+A_2=\int_{\Sigma_{\lambda}}\Big[\frac{F(|y^{\prime}|^{n-2s}w_{\lambda}^{\ast})}{|y^{\prime}|^{2n}}-\frac{F(|(y^{\prime})^{\lambda}|^{n-2s}w_{\lambda}^{\ast})}{|(y^{\prime})^{\lambda}|^{2n}}\Big]\Big[\frac{1}{|x-y^{\prime}|^{\mu}}-\frac{1}{|x^{\lambda}-y^{\prime}|^{\mu}}\Big]dy^{\prime}.
\end{equation}
In view of $|x-y^{\prime}|\leq|x^{\lambda}-y^{\prime}|$ for $y^{\prime}\in\Sigma_{\lambda}$, it is seen that
\begin{equation}\label{kk-4}
\frac{1}{|x-y^{\prime}|^{\mu}}-\frac{1}{|x^{\lambda}-y^{\prime}|^{\mu}}\geq0 \hspace{2mm}\mbox{on}\hspace{2mm}\Sigma_{\lambda}.
\end{equation}
Hence, in order to conclude the proofs of \eqref{kk-2}, we establish now the
following key estimate:
\begin{equation}\label{kk-5}
\frac{F(|y^{\prime}|^{n-2s}w_{\lambda}^{\ast})}{|y^{\prime}|^{2n}}\leq\frac{F(|(y^{\prime})^{\lambda}|^{n-2s}w_{\lambda}^{\ast})}{|(y^{\prime})^{\lambda}|^{2n}}.
\end{equation}
Since
$$|y^{\prime}|^2-|(y^{\prime})^{\lambda}|^2=(y_1^{\prime})^2-(2-2\lambda-y_{1}^{\prime})^2\geq0,$$
we have $|y^{\prime}|\geq|(y^{\prime})^{\lambda}|$. Then, it is enough to check that the function
\begin{equation}\label{kk-6}
\Theta(r):=\frac{F(r^{n-2s}w_{\lambda}^{\ast})}{r^{2n}}\hspace{2mm}\mbox{is non-increasing for}\hspace{2mm}r>0.
\end{equation}
Let us denote $u:=r^{n-2s}w_{\lambda}^{\ast}$. \eqref{kk-6} is equivalent to
$$
\mathcal{Y}(u):=2nF(u)-(n-2s)uf(u)\geq0.
$$
Observe that, due to \eqref{f1-2},
$$
(n+2s)f(u)-(n-2s)uf^{\prime}(u)\geq0.
$$
Thus differentiating $\mathcal{Y}$ with respect to $u$ gives
$$\mathcal{Y}^{\prime}(u)=n+2s)f(u)-(n-2s)uf^{\prime}(u)\geq0.$$
This leads to $\mathcal{Y}(u)\geq0$ for $u\geq0$. Combining these estimates with
\eqref{kk-7}, \eqref{kk-4} and \eqref{kk-5}, we conclude that $A_1+A_2\leq0$, and thus we complete the proof.

Combining the above claim with \eqref{bd}-\eqref{bdel} and $w^{\ast}\geq w_{\lambda}^{\ast}$, we deduce that
\begin{equation}\label{boun}
\begin{split}
\int_{\Sigma_{\lambda}}y^{1-2s}|\nabla v^{-}_{\lambda}|^2dxdy&\leq\int_{\partial_{bd}\Sigma_{\lambda}\cap\{w_{\lambda}^{\ast}\leq w^{\ast}\}}\big[\mathcal{K}(x,w^{\ast})-\mathcal{K}(x,w_{\lambda}^{\ast})\big](w^{\ast}-w_{\lambda}^{\ast})dx.\\&
\leq \int_{\partial_{bd}\Sigma_{\lambda}\cap\{w_{\lambda}^{\ast}\leq w^{\ast}\}}\bigg[
\Big(\int_{T(\Omega\times\{0\})}\frac{F(|y^{\prime}|^{n-2s}w^{\ast})}{|x-y^{\prime}|^{\mu}|y^{\prime}|^{2n}} dy^{\prime}\Big)\frac{f(|x|^{n-2s}w^{\ast})}{|x|^{n+2s}}\\&\hspace{3.5mm}
-\Big(\int_{T(\Omega\times\{0\})}\frac{F(|y^{\prime}|^{n-2s}w_{\lambda}^{\ast})}{|x-y^{\prime}|^{\mu}|y^{\prime}|^{2n}} dy^{\prime}\Big)\frac{f(|x|^{n-2s}w_{\lambda}^{\ast})}{|x|^{n+2s}}\bigg](w^{\ast}-w_{\lambda}^{\ast})dx
\\&
\leq \int_{\partial_{bd}\Sigma_{\lambda}\cap\{w_{\lambda}^{\ast}\leq w^{\ast}\}}\big[\mathcal{E}_1(x,w^{\ast},w_{\lambda}^{\ast})(v_{\lambda}^{-})^2+\mathcal{E}_2(x,w^{\ast},w_{\lambda}^{\ast})(w^{\ast}-w_{\lambda}^{\ast})\big]dx,
\end{split}
\end{equation}
where
\begin{equation*}
\mathcal{E}_1(x,w^{\ast},w_{\lambda}^{\ast})=\frac{1}{w^{\ast}-w_{\lambda}^{\ast}}
\Big(\int_{T(\Omega\times\{0\})}\frac{F(|y^{\prime}|^{n-2s}w^{\ast})}{|x-y^{\prime}|^{\mu}|y^{\prime}|^{2n}} dy^{\prime}\Big)\bigg[\frac{f(|x|^{n-2s}w^{\ast})-f(|x|^{n-2s}w_{\lambda}^{\ast})}{|x|^{n+2s}}\bigg],
\end{equation*}
\begin{equation*}
\mathcal{E}_2(x,w^{\ast},w_{\lambda}^{\ast})=\bigg[\int_{k(\Omega\times\{0\})}\frac{F(|y^{\prime}|^{n-2s}w^{\ast})-F(|y^{\prime}|^{n-2s}w_{\lambda}^{\ast})}{|x-y^{\prime}|^{\mu}|y^{\prime}|^{2n}} dy^{\prime}\bigg]\frac{f(|x|^{n-2s}w_{\lambda}^{\ast})}{|x|^{n+2s}}.
\end{equation*}

Owing to the appearance of Haree-type nonlinearity, $\mathcal{E}_{1}$ and $\mathcal{E}_{2}$ on the RHS of \eqref{boun} are a bit more difficult to estimate directly. Hence we split the proof into multiple cases.

$\bullet$ We now claim that $\mathcal{E}_1(x,w^{\ast},w_{\lambda}^{\ast})$ is bounded, that is, there exists a positive constant $C>0$ such that
\begin{equation}\label{Epuwx-1}
\mathcal{E}_1(x,w^{\ast},w_{\lambda}^{\ast})\leq C.
\end{equation}
Indeed,
since $f$ is locally Lipschitz continuous, it follows that
\begin{equation}\label{Lipschitz-1}
\frac{1}{(w^{\ast}-w_{\lambda}^{\ast})}\bigg[\frac{f(|x|^{n-2s}w^{\ast})-f(|x|^{n-2s}w_{\lambda}^{\ast})}{|x|^{n+2s}}\bigg]\leq C\sup_{\partial_{bd}\Sigma_{\lambda}}\{|w^{\ast}|+|w_{\lambda}^{\ast}|\}
\end{equation}
for $x\in\partial_{bd}\Sigma_{\lambda}$. For $\mathcal{E}_1(x,w^{\ast},w_{\lambda}^{\ast})$, we consider the remaining part of the function.
The assumptions \eqref{f1-0} and \eqref{f1-3} guarantee the existence of $C_{1},C_{2}>0$ such that
\begin{equation}\label{PQ-1}
F(u)\leq C_1 u^{\delta+1}+C_{2}u^{\sigma+1}.
\end{equation}
Hence
\begin{equation*}
\int_{T(\Omega\times\{0\})}\frac{F(|y^{\prime}|^{n-2s}w^{\ast})}{|x-y^{\prime}|^{\mu}|y^{\prime}|^{2n}} dy^{\prime}
\leq C_1\int_{T(\Omega\times\{0\})}\frac{|w^{\ast}|^{\delta+1}}{|x-y^{\prime}|^{\mu}|y^{\prime}|^{\mu}} dy^{\prime}+
C_1\int_{T(\Omega\times\{0\})}\frac{|w^{\ast}|^{\sigma+1}}{|x-y^{\prime}|^{\mu}|y^{\prime}|^{2n-(\sigma+1)(n-2s)}} dy^{\prime}.
\end{equation*}
To calculate the two integrals, we split its domain into
$$T(\Omega\times\{0\})=D_1\cup D_2\cup D_3,$$
where
$$D_1=B(0,1/4)\cap T(\Omega\times\{0\}),\hspace{2mm}D_2=B(x,1/4)\cap T(\Omega\times\{0\})\hspace{2mm}\mbox{and}\hspace{2mm}D_3=T(\Omega\times\{0\})\setminus(D_1\cup D_2).$$
By $x\in\partial_{bd}\Sigma_{\lambda}$ for $\lambda$ sufficiently small, we may assume that $\lambda<1/4$. Then we note that
\begin{equation*}
\begin{split}
\begin{cases}
|x-y^{\prime}|\geq|x|-|y^{\prime}|>\frac{1}{2} \quad\quad\quad\hspace{11.2mm}\mbox{ if } y^{\prime}\in D_1,\\
|y^{\prime}|\geq|x|-|x-y^{\prime}|>\frac{1}{2} \hspace{11.2mm}\quad\quad\quad\mbox{ if } y^{\prime}\in D_2,\\
|y^{\prime}|>\frac{1}{4}\hspace{2mm}\mbox{and}\hspace{2mm}|x-y^{\prime}|>\frac{1}{4}\quad\quad\hspace{4mm}\quad\quad \mbox{ if } y^{\prime}\in D_3.
\end{cases}
\end{split}
\end{equation*}
Using this, we can compute the integral over $D_1$ as
\begin{equation*}
\begin{split}
\int_{D_1}\frac{|w^{\ast}|^{\delta+1}}{|x-y^{\prime}|^{\mu}|y^{\prime}|^{\mu}} dy^{\prime}
\leq C \int_{B(0,\frac{1}{4})}\frac{1}{|y^{\prime}|^{\mu}}dy^{\prime}\leq C,
\end{split}
\end{equation*}
\begin{equation*}
\begin{split}
\int_{D_2}\frac{|w^{\ast}|^{\delta+1}}{|x-y^{\prime}|^{\mu}|y^{\prime}|^{\mu}} dy^{\prime}
\leq C \int_{B(x,\frac{1}{4})}\frac{1}{|x-y^{\prime}|^{\mu}}dy^{\prime}\leq C,
\end{split}
\end{equation*}
and
\begin{equation*}
\begin{split}
\int_{D_3}\frac{|w^{\ast}|^{\delta+1}}{|x-y^{\prime}|^{\mu}|y^{\prime}|^{\mu}} dy^{\prime}
\leq C.
\end{split}
\end{equation*}
Similarly, we estimate the second of RHS over $D_1$, $D_2$ and $D_3$ as
\begin{equation*}
\begin{split}
\Big(\int_{D_1}+\int_{D_2}+\int_{D_3}\Big)\frac{|w^{\ast}|^{\sigma+1}}{|x-y^{\prime}|^{\mu}|y^{\prime}|^{2n-(\sigma+1)(n-2s)}} dy^{\prime}
\leq C,
\end{split}
\end{equation*}
where $\sigma>2s/(n-2s)$. Consequently,
\begin{equation}\label{Lipschitz-5}
\int_{T(\Omega\times\{0\})}\frac{F(|y^{\prime}|^{n-2s}w^{\ast})}{|x-y^{\prime}|^{\mu}|y^{\prime}|^{2n}} dy^{\prime}\leq C.
\end{equation}
Estimate \eqref{Epuwx-1} now follows from \eqref{Lipschitz-1} and \eqref{Lipschitz-5}.

$\bullet$ We claim that
\begin{equation}\label{Lipschitz-6}
\begin{split}
&\int_{\partial_{bd}\Sigma_{\lambda}\cap\{w_{\lambda}^{\ast}\leq w^{\ast}\}}\mathcal{E}_2(x,w^{\ast},w_{\lambda}^{\ast})(w^{\ast}-w_{\lambda}^{\ast})dx
\\&\leq C\Big[\big|\partial_{bd}\Sigma_{\lambda}\cap\{w_{\lambda}^{\ast}\leq w^{\ast}\}\big|^{\frac{1}{q}-\frac{n-2s}{2n}}+
\big|\partial_{bd}\Sigma_{\lambda}\cap\{w_{\lambda}^{\ast}\leq w^{\ast}\}\big|^{\frac{1}{q_0}-\frac{n-2s}{2n}}\Big]\big(\int_{\Omega}|v_{\lambda}^{-}(\cdot,0)|^{2_{s}^{\sharp}}dx\big)^{\frac{2}{2_{s}^{\sharp}}},
\end{split}
\end{equation}
where
$$\frac{1}{q}=1-\frac{\mu}{n}-\varepsilon_0,\hspace{2mm}\mbox{and}\hspace{2mm}\frac{1}{q_0}=1-\frac{2n-(n-2s)(\sigma+1)}{n}-\varepsilon_0.$$
Indeed, due to \eqref{f1-0}, there exist $C_{1},C_2>0$ such that
\begin{equation}\label{delta}
f(u)\leq C_1u^{\delta}+C_{2}u^{\sigma}.
\end{equation}
Thus the mean value theorem and \eqref{delta} imply that
\begin{equation}\label{Lipschitz-7}
\begin{split}
F(|y^{\prime}|^{n-2s}w^{\ast})-F(|y^{\prime}|^{n-2s}w_{\lambda}^{\ast})&=f(\xi(y^{\prime}))(y^{\prime})^{n-2s}(w^{\ast}-w_{\lambda}^{\ast})
\leq C_1(\max\{w^{\ast},w^{\ast}_{\lambda}\})^{\delta}|y^{\prime}|^{(\delta+1)(n-2s)}(w^{\ast}-w_{\lambda}^{\ast})\\&\hspace{3.5mm}+C_2(\max\{w^{\ast},w^{\ast}_{\lambda}\})^{\sigma}|y^{\prime}|^{(\sigma+1)(n-2s)}(w^{\ast}-w_{\lambda}^{\ast}).
\end{split}
\end{equation}
And for $x\in\partial_{bd}\Sigma_{\lambda}$, we have $|x|>\rho>0$. Then
\begin{equation}\label{Lipschitz-8}
\frac{f(|x|^{n-2s}w_{\lambda}^{\ast})}{|x|^{n+2s}}\leq C(\sup_{\partial_{bd}\Sigma_{\lambda}}\{1+|w_{\lambda}^{\ast}|\}).
\end{equation}
Then \eqref{Lipschitz-7} and \eqref{Lipschitz-8} imply that
\begin{equation}\label{Lipschitz-9}
\begin{split}
&\int_{\partial_{bd}\Sigma_{\lambda}\cap\{w_{\lambda}^{\ast}\leq w^{\ast}\}}\mathcal{E}_2(x,w^{\ast},w_{\lambda}^{\ast})(w^{\ast}-w_{\lambda}^{\ast})dx\\&\leq
C\int_{\partial_{bd}\Sigma_{\lambda}}\int_{T(\Omega\times\{0\})}\frac{F(|y^{\prime}|^{n-2s}w^{\ast})-F(|y^{\prime}|^{n-2s}w_{\lambda}^{\ast})}{|x-y^{\prime}|^{\mu}|y^{\prime}|^{2n}}(w^{\ast}-w_{\lambda}^{\ast})dxdy^{\prime}\\&
\leq C\int_{\partial_{bd}\Sigma_{\lambda}\cap\{w_{\lambda}^{\ast}\leq w^{\ast}\}}\bigg[\Big(\int_{T(\Omega\times\{0\})}\frac{w^{\ast}-w_{\lambda}^{\ast}}{|x-y^{\prime}|^{\mu}|y^{\prime}|^{\mu}}dy^{\prime}\Big)+\Big(\int_{T(\Omega\times\{0\})}\frac{w^{\ast}-w_{\lambda}^{\ast}}{|x-y^{\prime}|^{\mu}|y^{\prime}|^{2n-(n-2s)(\sigma+1)}}dy^{\prime}\bigg](w^{\ast}-w_{\lambda}^{\ast})dx\\&
=B_1+B_2.
\end{split}
\end{equation}
Then, since for any $\varepsilon_0>$ small,
$$\frac{1}{p}=1-\frac{\mu}{n}+\varepsilon_0,\hspace{2mm}\frac{1}{q}=1-\frac{\mu}{n}-\varepsilon_0,$$
The weighted Hardy-Littlewood-Sobolev inequality (see \cite{stein-weiss}) gives that
\begin{equation}\label{Lipschitz-10}
\begin{split}
B_1\leq C\big\|w^{\ast}-w_{\lambda}^{\ast}\big\|_{L^{p}(T(\Omega\times\{0\}))}\big\|w^{\ast}-w_{\lambda}^{\ast}\big\|_{L^{q}(\partial_{bd}\Sigma_{\lambda}\cap\{w_{\lambda}^{\ast}\leq w^{\ast}\})}.
\end{split}
\end{equation}
Since $\mu<(n+2s)/2$, we have $\frac{1}{p}>\frac{n-2s}{2n}$ and $\frac{1}{q}>\frac{n-2s}{2n}$ for $\varepsilon_0$ small. Hence using H\"{o}lder inequality, we obtain
\begin{equation}\label{Lipschitz-11}
\big\|w^{\ast}-w_{\lambda}^{\ast}\big\|_{L^{p}(T(\Omega\times\{0\}))}\leq\big|\Omega\big|^{\frac{1}{p}-\frac{n-2s}{2n}}\big\|v_{\lambda}^{-}(\cdot,0)\big\|_{L^{\frac{2n}{n-2s}}(\Omega))},
\end{equation}
\begin{equation}\label{Lipschitz-12}
\big\|w^{\ast}-w_{\lambda}^{\ast}\big\|_{L^{q}(\partial_{bd}\Sigma_{\lambda}\cap\{w_{\lambda}^{\ast}\leq w^{\ast}\})}\leq\big|\partial_{bd}\Sigma_{\lambda}\cap\{w_{\lambda}^{\ast}\leq w^{\ast}\}\big|^{\frac{1}{q}-\frac{n-2s}{2n}}\big\|v_{\lambda}^{-}(\cdot,0)\big\|_{L^{\frac{2n}{n-2s}}(\Omega))}.
\end{equation}
Then \eqref{Lipschitz-10} and \eqref{Lipschitz-11}-\eqref{Lipschitz-12} imply that
\begin{equation}\label{Lipschitz-13}
B_1\leq C\big|\partial_{bd}\Sigma_{\lambda}\cap\{w_{\lambda}^{\ast}\leq w^{\ast}\}\big|^{\frac{1}{q}-\frac{n-2s}{2n}}\big|\Omega\big|^{\frac{1}{p}-\frac{n-2s}{2n}}\big(\int_{\Omega}|v_{\lambda}^{-}(\cdot,0)|^{2_{s}^{\sharp}}dx\big)^{\frac{2}{2_{s}^{\sharp}}}.
\end{equation}
Similarly, due to $\sigma>\max\{\frac{\mu}{n-2s},\frac{n+2s}{2(n-2s)}\}$, we can take
$$\frac{1}{p_0}=1-\frac{\mu}{n}+\varepsilon_0,\hspace{2mm}\frac{1}{q_0}=1-\frac{2n-(n-2s)(\sigma+1)}{n}-\varepsilon_0,$$
and so that $\frac{1}{p_0}>\frac{n-2s}{2n}$ and $\frac{1}{q_0}>\frac{n-2s}{2n}$ for $\varepsilon_0$ small.
Hence combining the weighted Hardy-Littlewood-Sobolev inequality, we have
\begin{equation*}
\begin{split}
B_2&\leq C\big\|w^{\ast}-w_{\lambda}^{\ast}\big\|_{L^{p_0}(T(\Omega\times\{0\}))}\big\|w^{\ast}-w_{\lambda}^{\ast}\big\|_{L^{q_0}(\partial_{bd}\Sigma_{\lambda}\cap\{w_{\lambda}^{\ast}\leq w^{\ast}\})}\\&
\leq C\big|\partial_{bd}\Sigma_{\lambda}\cap\{w_{\lambda}^{\ast}\leq w^{\ast}\}\big|^{\frac{1}{q_0}-\frac{n-2s}{2n}}\big|\Omega\big|^{\frac{1}{p_0}-\frac{n-2s}{2n}}\big(\int_{\Omega}|v_{\lambda}^{-}(\cdot,0)|^{2_{s}^{\sharp}}dx\big)^{\frac{2}{2_{s}^{\sharp}}}.
\end{split}
\end{equation*}
By combining this with \eqref{Lipschitz-9} and \eqref{Lipschitz-13}, we finally derive
\begin{equation*}
\begin{split}
\int_{\partial_{bd}\Sigma_{\lambda}\cap\{w_{\lambda}^{\ast}\leq w^{\ast}\}}\mathcal{E}_2(x,w^{\ast},w_{\lambda}^{\ast})(w^{\ast}-w_{\lambda}^{\ast})dx
&\leq C\Big[\big|\partial_{bd}\Sigma_{\lambda}\cap\{w_{\lambda}^{\ast}\leq w^{\ast}\}\big|^{\frac{1}{q}-\frac{n-2s}{2n}}\big|\Omega\big|^{\frac{1}{p}-\frac{n-2s}{2n}}\\&+
\big|\partial_{bd}\Sigma_{\lambda}\cap\{w_{\lambda}^{\ast}\leq w^{\ast}\}\big|^{\frac{1}{q_0}-\frac{n-2s}{2n}}\big|\Omega\big|^{\frac{1}{p_0}-\frac{n-2s}{2n}}\Big]\big(\int_{\Omega}|v_{\lambda}^{-}(\cdot,0)|^{2_{s}^{\sharp}}dx\big)^{\frac{2}{2_{s}^{\sharp}}},
\end{split}
\end{equation*}
as claimed.

As a consequence, combined with \eqref{boun}-\eqref{Epuwx-1}, \eqref{Lipschitz-6} and H\"{o}lder inequality, we conclude that
\begin{equation}\label{I245}
\begin{split}
&\int_{\Sigma_{\lambda}}y^{1-2s}|\nabla v_{\lambda}^{-}|^2dxdy\leq C_1\int_{\partial_{bd}\Sigma_{\lambda}\cap\{w_{\lambda}^{\ast}\leq w^{\ast}\}}(v_{\lambda}^{-})^2dx\\&
+C_1\Big[\big|\partial_{bd}\Sigma_{\lambda}\cap\{w_{\lambda}^{\ast}\leq w^{\ast}\}\big|^{\frac{1}{q}-\frac{n-2s}{2n}}+
\big|\partial_{bd}\Sigma_{\lambda}\cap\{w_{\lambda}^{\ast}\leq w^{\ast}\}\big|^{\frac{1}{q_0}-\frac{n-2s}{2n}}\Big]\big(\int_{\Omega}|v_{\lambda}^{-}(\cdot,0)|^{2_{s}^{\sharp}}dx\big)^{\frac{2}{2_{s}^{\sharp}}}\\&
\leq C_1\Big[\big|\partial_{bd}\Sigma_{\lambda}\cap\{w_{\lambda}^{\ast}\leq w^{\ast}\}\big|^{\frac{2s}{n}}+\big|\partial_{bd}\Sigma_{\lambda}\cap\{w_{\lambda}^{\ast}\leq w^{\ast}\}\big|^{\frac{1}{q}-\frac{n-2s}{2n}}+
\big|\partial_{bd}\Sigma_{\lambda}\cap\{w_{\lambda}^{\ast}\leq w^{\ast}\}\big|^{\frac{1}{q_0}-\frac{n-2s}{2n}}\Big]\big(\int_{\Omega}|v_{\lambda}^{-}|^{2_{s}^{\sharp}}dx\big)^{\frac{2}{2_{s}^{\sharp}}},
\end{split}
\end{equation}
where $C_1$ denotes different constants. Using this bound and the weighted Sobolev inequality, and taking $\lambda>0$ sufficiently small such that $\lambda<\epsilon_0:=C_1^{-1}$,
there holds
$$
\Big[\big|\partial_{bd}\Sigma_{\lambda}\cap\{w_{\lambda}^{\ast}\leq w^{\ast}\}\big|^{\frac{2s}{n}}+\big|\partial_{bd}\Sigma_{\lambda}\cap\{w_{\lambda}^{\ast}\leq w^{\ast}\}\big|^{\frac{1}{q}-\frac{n-2s}{2n}}+
\big|\partial_{bd}\Sigma_{\lambda}\cap\{w_{\lambda}^{\ast}\leq w^{\ast}\}\big|^{\frac{1}{q_0}-\frac{n-2s}{2n}}\Big]<\varepsilon_0.
$$
This leads to
\begin{equation*}
\begin{split}
\big(\int_{\Omega}|v_{\lambda}^{-}|^{2_{s}^{\sharp}}dx\big)^{\frac{2}{2_{s}^{\sharp}}}<\big(\int_{\Omega}|v_{\lambda}^{-}|^{2_{s}^{\sharp}}dx\big)^{\frac{2}{2_{s}^{\sharp}}}
\end{split}
\end{equation*}
for $\lambda>0$ sufficiently small.
This implies $v_{\lambda}^{-}\equiv0$ for $\lambda>0$ sufficiently small.
Then we are able to start off from this neighborhood of $z_1=1$, and move
the plane $T_{\lambda}$ along the $z_1$ direction to the left as long as the inequality \eqref{v} holds.

Step 2. We continuously move the plane this way up to its limiting position.
More precisely, we define
$$
\lambda_0=\sup\{\lambda>0: T_{\lambda}(\Sigma_{\lambda})\subset T(\mathcal{C})\},
$$
and
$$E:=\{\lambda\in(0,\frac{\lambda_0}{2}]:~v_{\lambda}\geq0\hspace{2mm}\mbox{for all}\hspace{2mm}x\in\Sigma_{\lambda}\}\cup\{0\}.$$
We next need to show that $E=[0,\frac{\lambda_0}{2}]$, that is $v_{\lambda}(x)\geq0$ for all $x\in\Sigma_{\lambda_0}$.
Since $E$ is closed and nonempty, it is enough to prove that $E$ is open in $[0,\frac{\lambda_0}{2}]$. The proof is essentially the same as that of Lemma 4.1 in \cite{Woocheol,QS}. For the reader's convenience, we include the details.
For any $\lambda_1\in E\cap [0,\frac{\lambda_0}{2}]$, we have $v_{\lambda_1}\geq0$ on $\Sigma_{\lambda_1}$. Since $v_{\lambda_1}>0$ on $T(\partial\Omega\times[0,\infty))\cap \Sigma_{\lambda_1}$ and satisfies the homogeneous weighted harmonic equation, the strong maximum principle (\cite[Section 4]{CS})
implies that $v_{\lambda_1}>0$ in $\Sigma_{\lambda_1}$. Thus there exists $c>0$ such that the set $\Sigma_{\lambda_1,c}=\{x\in \Sigma_{\lambda_1}: v_{\lambda_1}>c\}$ has measure at least $|\Sigma_{\lambda_1}|-\frac{\varrho}{2}$.
By continuity, for all $\lambda\in[\lambda_1,\lambda_1+\epsilon)$ with sufficiently small $\epsilon>0$, $v_{\lambda}>\frac{c}{2}$ on $\Sigma_{\lambda_1}$ and $|\Sigma_{\lambda}\setminus\Sigma_{\lambda_1}|<\frac{\varrho}{2}$.
This yields that the measure of $\{x\in \Sigma_{\lambda}: v_{\lambda}\leq0\}$ is less than $\varrho$.
By inequality \eqref{I245}, we conclude that $v_{\lambda}\geq0$ in $\Sigma_{\lambda}$ for all such $\lambda$, which proves that $E$ is open. Combined with the closedness and non-emptiness of $E$,
we obtain $[0,\frac{\lambda_0}{2}]$. Therefore, $w^{\ast}$ is increasing along any line starting from a boundary point in $\Omega$.

Choose any $x\in\partial\Omega$, there exist the constants $\alpha_0>0$ and $\gamma>0$ such that there exists an open connected set $\mathcal{O}_x\in S^{n-1}$ satisfying $|\mathcal{O}_x|>\gamma$ with
$$I_x=\{x+sw^{\ast}|0\leq s\leq\alpha_0,~w^{\ast}\in \mathcal{O}_x\}.$$
Then we guarantee a weaker version of the monotonicity
$$
u(x+s_1w^{\ast})\leq Cu(x+s_2w^{\ast}), \hspace{2mm}\forall0\leq s_1\leq s_2\leq\alpha_0\hspace{2mm}\mbox{and}\hspace{2mm}w^{\ast}\in \mathcal{O}_x.
$$
Combining this property and \eqref{boundary-1}, we get
\begin{equation*}
\begin{split}
\gamma u(x)\leq\frac{1}{|I_x|}\int_{I_x}u(y)dy\leq\frac{C}{\gamma}\int_{\mathcal{M}(\Omega,\alpha_0)}u(y)dy\leq C
\end{split}
\end{equation*}
for any $x\in\mathcal{Q}(\Omega,\alpha_0)$. This proves \eqref{boundary-2}.

\textbf{Case 2.} $\Omega$ is strictly convex. Then for each $3$-tuple $(x,s,v)$ such that $x+sv\in I_x$, let $P_x$ be a plane which passes through $x+sv$ and has $v$ as its normal vector. Assume that it divides $\Omega$ into two parts $\Omega_1$ and $\Omega_2$ where $x\in\partial\Omega_1$. Then the reflection of $\Omega_1$ with respect to the plane $P_x$ is contained in $\Omega_2$. We can take $T_{\lambda}$ and $\Sigma_{\lambda}$ replaced by $P_x$ and $\Omega_1$, and we have $T_{\lambda}(\Omega_1)\subset\Omega_2\subset\Omega$. Hence by applying the maximum principle, $v_{\lambda}=u(T_{\lambda}(y))-u(y)\geq0$ on $\Sigma_\lambda$. Therefore we deduce that a stronger monotonicity for $u$,
$$
u(x+s_1w^{\ast})\leq u(x+s_2w^{\ast}), \hspace{2mm}\forall0\leq s_1\leq s_2\leq\alpha_0\hspace{2mm}\mbox{and}\hspace{2mm}w^{\ast}\in \mathcal{O}_x.
$$
The rest of the proof is then analogous to the case $1$. We finally get a uniform bound of $u(x)$ on $\mathcal{Q}(\Omega,\alpha_0)$.
 Concluding the proof.
\end{proof}

\section{Decay estimate of rescaled solutions}\label{sec:sobolev}
In this section, we are devoted to prove that an adequate decay estimate for least energy solutions to problem \eqref{prondgr}.
For the simplicity of notations, we write $\|\cdot\|_{\infty}$ instead of the norm of $\|\cdot\|_{L^{\infty}(\Omega)}$
in the sequel.
Since $u_\epsilon$ become unbounded as $\epsilon\rightarrow0$ then we choose $x_\epsilon\in\Omega$ and the number $\mu_{\epsilon}>0$ is given by
\begin{equation}\label{miu}
\alpha_{n,s}\mu_{\epsilon}=\|u_{\epsilon}\|_{\infty}=u_\epsilon(x_\epsilon),\hspace{2mm}\mbox{where}\hspace{2mm}\alpha_{n,s}:=\alpha_{n,n-2s,s}.
\end{equation}
We define a family of rescaled functions
$$v_{\epsilon}(x)=\mu_{\epsilon}^{-1}u_{\epsilon}(\mu_{\epsilon}^{-\frac{2_{s}^{\sharp}-2-\epsilon}{2s}}x+x_{\epsilon})\quad\mbox{for}\quad x\in\Omega_{\epsilon}:=\mu_{\epsilon}^{\frac{4s-(n-2s)\epsilon}{2s(n-2s)}}(\Omega-x_{\epsilon}).$$
Then we have
\begin{equation}\label{rescaled}
(-\Delta)^s v_{\epsilon}(x)=\mu_{\epsilon}^{-\big(1+2s\big)}(-\Delta)^s u_{\epsilon}\big(\mu_{\epsilon}^{-\frac{2_{s}^{\sharp}-2-\epsilon}{2s}}x+x_{\epsilon}\big)=\big(|x|^{-{(n-2s)}}\ast v_\epsilon^{2_{s}^{\sharp}-1-\epsilon}\big)v_\epsilon^{2_{s}^{\sharp}-2-\epsilon}\hspace{3mm}\mbox{in}\hspace{2mm} x\in\Omega_{\epsilon}.
\end{equation}
Now, we consider the following change of variable:
$$
T:~\mathbb{R}^n\setminus\{0\}\rightarrow\mathbb{R}^n\setminus\{0\},\hspace{2mm}T(x)=\frac{x}{|x|^2}.$$
Let $\Omega_{\epsilon}^{\ast}:=T(\Omega_{\epsilon})\subset \mathbb{R}^n\setminus B(0,1/(c_{0}\mu_{\epsilon}^{\frac{4s-(n-2s)\epsilon}{2s(n-2s)}}))$. Then we have that $\Omega_{\epsilon}^{\ast}\subset\mathbb{R}^n\setminus B^{n}(0,c_0\lambda_{\epsilon})$ for some $c_0>0$, and let us consider the Kelvin transform of $V_{\epsilon}$ defined by
$$V_{\epsilon}(x)=\frac{1}{|x|^{n-2s}}v_{\epsilon}\big(\frac{x}{|x|^2}\big)\quad\mbox{in}\quad\Omega_{\epsilon}^{\ast}:=\big\{x:\frac{x}{|x|^{2}}\in\Omega_{\epsilon}\big\}$$
satisfies
\begin{equation}\label{Kelvin}
\begin{split}
(-\Delta)^{s} V_{\epsilon}&=\frac{1}{|x|^{n+2s}}(-\Delta)^{s}  v_{\epsilon}\big(\frac{x}{|x|^2}\big)\\&
=\frac{1}{|x|^{{\epsilon(n-2s)}}}\Big(\int_{\Omega_{\epsilon}^{\ast}}\frac{V_{\epsilon}^{2_{s}^{\sharp}-1-\epsilon}(y)}{|x-y|^{{(n-2s)}}|y|^{{\epsilon(n-2s)}}} dy\Big)V_{\epsilon}^{2_{s}^{\sharp}-2-\epsilon}\hspace{4mm}\mbox{in}\hspace{2mm} x\in\Omega_{\epsilon}^{\ast}.
\end{split}
\end{equation}

\begin{lem}\label{infinite}
Let $\mu_{\epsilon}>0$ be the number introduced in \eqref{miu}. If $u_\epsilon$ is a minimizing sequence to \eqref{minimi}, then
\begin{equation*}
\mu_{\epsilon}=\|u_{\epsilon}\|_{\infty}\rightarrow\infty \quad\mbox{as}\quad\epsilon\rightarrow0.
\end{equation*}
\end{lem}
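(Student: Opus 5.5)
We argue by contradiction. Suppose that, along a subsequence $\epsilon_k\to0$, $\|u_{\epsilon_k}\|_{\infty}\le C$. The aim is to show that then $u_{\epsilon_k}$ converges, in $H_0^s(\Omega)$ and in the Hartree term, to a positive solution $u_0$ of the critical problem
\begin{equation*}
A_s u_0=\big(|x|^{-(n-2s)}\ast u_0^{2_{s}^{\sharp}-1}\big)u_0^{2_{s}^{\sharp}-2}\quad\mbox{in }\Omega,\qquad u_0=0\mbox{ on }\partial\Omega,
\end{equation*}
which attains the constant $C_{HLS}$ in the Rayleigh-type quotient \eqref{minimi}. This is impossible on a bounded domain, by the same mechanism as for the classical Sobolev constant.

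\textbf{Step 1: uniform regularity.} Since $\|u_{\epsilon_k}\|_\infty\le C$ and $\Omega$ is bounded, the right-hand sides
$$g_k:=(|x|^{-(n-2s)}\ast u_{\epsilon_k}^{2_{s}^{\sharp}-1-\epsilon_k})u_{\epsilon_k}^{2_{s}^{\sharp}-2-\epsilon_k}$$
are uniformly bounded in $L^\infty(\Omega)$, because $|x|^{-(n-2s)}$ is integrable on bounded sets. The regularity theory for $A_s$ with bounded data (Cabr\'e--Tan, Caffarelli--Silvestre, via the extension \eqref{ele-100-1}) then gives uniform $C^{\alpha}(\overline\Omega)$ bounds. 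Testing the equation with $u_{\epsilon_k}$ also gives a uniform $H_0^s$ bound. Passing to a subsequence, $u_{\epsilon_k}\to u_0$ uniformly on $\overline\Omega$ and weakly in $H^s_0(\Omega)$, and $u_0$ solves the limit equation above by dominated convergence in the convolution.

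\textbf{Step 2: nontriviality, the main obstacle.} We must rule out $u_0\equiv0$. Testing the equation with $u_\epsilon$ gives
$$\|u_\epsilon\|_{H^s_0}^2=\int_\Omega(|x|^{-(n-2s)}\ast u_\epsilon^{p})u_\epsilon^{p}\,dx=:D_\epsilon,\qquad p=2_{s}^{\sharp}-1-\epsilon.$$
The quotient \eqref{minimi} then equals $D_\epsilon^{1-1/p}\to C_{HLS}$. Since $1-1/p\to(n-2s)/(n+2s)>0$, $D_\epsilon$ stays bounded below by a positive constant. Uniform convergence gives $D_\epsilon\to D_0=\int_\Omega(|x|^{-(n-2s)}\ast u_0^{2_{s}^{\sharp}-1})u_0^{2_{s}^{\sharp}-1}dx>0$, so $u_0\not\equiv0$.

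For the upper energy bound, weak lower semicontinuity gives $\|u_0\|^2_{H_0^s}\le\liminf D_\epsilon=D_0$. Testing the limit equation with $u_0$ gives equality, so the convergence is strong in $H^s_0$. Hence $u_0$ attains the best constant in \eqref{Prm} (with $\mu=n-2s$, $2^{\ast}_{\mu,s}=2_{s}^{\sharp}-1$) after extension by zero. Here one must use that the spectral $H^s_0(\Omega)$ norm dominates the $\dot H^s(\mathbb{R}^n)$ norm of the zero extension, and I expect the care to be needed at this point.

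\textbf{Step 3: contradiction.} By Le's classification \cite{ple}, extremals of \eqref{Prm} are exactly the bubbles $W[\xi,\lambda]$ in \eqref{defU}, which are positive on all of $\mathbb{R}^n$. This contradicts $u_0$ vanishing outside the bounded set $\Omega$. Alternatively, without the norm comparison, the domain-level best constant equals $C_{HLS}$ by concentrating bubbles and is not attained, by a strict-inequality argument through the extension. In either form, $\|u_\epsilon\|_\infty\to\infty$, and hence $\mu_\epsilon=\|u_\epsilon\|_\infty/\alpha_{n,s}\to\infty$.
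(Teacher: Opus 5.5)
Your proof is correct and follows the same route as the paper: a bounded subsequence, compactness to a limit $u_0$, lower semicontinuity together with \eqref{minimi} (which forces $D_\epsilon\to C_{HLS}^{(n+2s)/(4s)}$) to make $u_0$ an extremal of the Hartree--Sobolev trace inequality, and a contradiction because extremals are bubbles, which cannot be supported in a bounded domain. You also supply two points the paper leaves implicit, namely why $u_0\not\equiv0$ and the comparison $\|\tilde u_0\|_{\dot H^s(\mathbb{R}^n)}\le\|u_0\|_{H_0^s(\Omega)}$ (immediate from extending the cylinder extension by zero to $\mathbb{R}^{n+1}_+$); one harmless slip is that $1-1/p\to 4s/(n+2s)$, not $(n-2s)/(n+2s)$.
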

\begin{proof}
 Assume that $\{u_{\epsilon}\}$ has a bounded subsequence. Then by Arzela-Ascoli Theorem, up to a sub-sequence, we obtain that for $\epsilon>0$ small enough, there exists a $\{u_{\epsilon}\}$ converging to some $u_{0}\not\equiv0$ uniformly on any compact set in $\mathbb{R}^n$. Recalling $w_{\epsilon}$ the s-harmonic extension of $u_\epsilon$ and we denote $W_0$ the s-harmonic extension of $u_{0}$. Then we have $\nabla w_{\epsilon}(x,y)\rightarrow\nabla W_0(x,y)$ for any $(x,y)\in\mathcal{C}$ as $\epsilon\rightarrow0$. Therefore,
 \begin{equation}\label{bound-1}
 \begin{split}
 \int_{\mathcal{C}}y^{1-2s}|\nabla W_0|^2dxdy&=\int_{\mathcal{C}}y^{1-2s}\liminf\limits_{\epsilon\rightarrow0}|\nabla w_\epsilon(x,y)|^2dxdy\leq\liminf\limits_{\epsilon\rightarrow0}\int_{\mathcal{C}}y^{1-2s}|\nabla w_\epsilon(x,y)|^2dxdy\\&
 =\liminf\limits_{\epsilon\rightarrow0}\kappa_{s}\int_{\Omega\times\{0\}}A_{\Omega}^su_{\epsilon}(x)u_{\epsilon}(x)dx
 \\&
 =\liminf\limits_{\epsilon\rightarrow0}\kappa_s\int_{\Omega}\big(|x|^{-(n-2s)}\ast u_{\epsilon}^{2_{s}^{\sharp}-1-\epsilon}\big)u_{\epsilon}^{2_{s}^{\sharp}-1-\epsilon}dx\\&
 =\kappa_s\int_{\Omega}\big(|x|^{-(n-2s)}\ast u_{0}^{2_{s}^{\sharp}-1}\big)u_{0}^{2_{s}^{\sharp}-1}dx.
 \end{split}
 \end{equation}
Moreover, by virtue of \eqref{minimi}, we know that
\begin{equation}\label{SHL}
\frac{\Big[\displaystyle\int_{\Omega}(|x|^{-(n-2s)} \ast|w_\epsilon(x,0)|^{2_{s}^{\sharp}-1-\epsilon})|w_\epsilon(x,0)|^{2_{s}^{\sharp}-1-\epsilon} dx\Big]^{\frac{1}{2_{s}^{\sharp}-1-\epsilon}}}{\displaystyle\int_{\mathcal{C}}y^{1-2s}|\nabla w_\epsilon(x,y)|^2dxdy}=\frac{1}{\kappa_{s}C_{HLS}}+o(1)
\end{equation}
as $\epsilon\rightarrow0$. Then combining \eqref{SHL} and the identity
\begin{equation*}
\int_{\mathcal{C}}y^{1-2s}|\nabla w_\epsilon(x,y)|^2dxdy
 =\kappa_s\int_{\Omega}\big(|x|^{-(n-2s)}\ast u_{\epsilon}^{2_{s}^{\sharp}-1-\epsilon}\big)u_{\epsilon}^{2_{s}^{\sharp}-1-\epsilon}dx,
\end{equation*}
we obtain
\begin{equation}\label{chls}
\lim\limits_{\epsilon\rightarrow0}\int_{\Omega}(|x|^{-(n-2s)} \ast|w_\epsilon(x,0)|^{2_{s}^{\sharp}-1-\epsilon})|w_\epsilon(x,0)|^{2_{s}^{\sharp}-1-\epsilon} dx=C_{HLS}^{\frac{n+2s}{4s}}.
\end{equation}
In view of \eqref{bound-1} and \eqref{chls}, we get that
\begin{equation*}
\int_{\mathcal{C}}y^{1-2s}|\nabla W_0|^2dxdy\leq \kappa_{s}C_{HLS}\Big[\int_{\Omega}\int_{\Omega} \frac{W_{0}^{2_{s}^{\sharp}-1}(x,0)W_{0}^{2_{s}^{\sharp}-1}(t,0)}{|x-t|^{n-2s}}dxdt\Big]^{\frac{1}{2_{s}^{\sharp}-1}}.
\end{equation*}
Thus, we find that $W_{0}$ achieves the best Sobolev constant in the nonlocal Sobolev-type trace inequality
\begin{equation}\label{trace}
\left(\int_{\mathbb{R}^n}\int_{\mathbb{R}^n} \frac{|w(x,0)|^{2_{s}^{\sharp}-1}|w(t,0)|^{2_{s}^{\sharp}-1}}{|x-t|^{n-2s}}dxdt\right)^{\frac{1}{2_{s}^{\sharp}-1}}
\leq C\int_{0}^{\infty}\int_{\mathbb{R}^n}y^{1-2s}|\nabla w(x,y)|^2dxdy.
\end{equation}
so that $W_{0}=CW[\xi,\lambda]$, for some $C,\lambda\in\mathbb{R}^{+}$ and $\xi\in\mathbb{R}^n$.
A contradicts to the well know fact that the nonlocal Sobolev-type trace inequality is never achieved on a bounded domain, see \cite{ple,MTS}.
\end{proof}

In virtue of \eqref{ele-100-1}, for a given least energy solution $u_{\epsilon}$ for problem \eqref{prondgr}, define its $s$-harmonic extension $w_{\epsilon}\in\mathcal{D}^{1,2}(\mathcal{C};y^{1-2s})$ as the solution of the elliptic equation
\begin{equation}\label{CF}
\left\lbrace
\begin{aligned}
&-\mbox{div}(y^{1-2s}\nabla w_{\epsilon})=0\hspace{4.14mm}\quad\quad \quad \quad \quad \quad \quad \quad  \mbox{in}\hspace{2mm} \mathcal{C},\\
&w_{\epsilon}=0\quad \quad \quad \quad \quad \quad \quad \hspace{12mm}\hspace{2mm}\hspace{8.9mm}\hspace{10mm} \mbox{on}\hspace{2mm}\partial_{L}\mathcal{C},\\
&w_{\epsilon}>0\quad \quad \quad \quad \quad \quad \quad \quad \hspace{10mm}\hspace{8.5mm}\hspace{10mm}\hspace{1mm}\mbox{in}\hspace{2mm}\mathcal{C},\\
&\partial_{\nu}^{s}w_{\epsilon}=\big(|x|^{-(n-2s)}\ast w_{\epsilon}^{2_{s}^{\sharp}-1-\epsilon}\big)w_{\epsilon}^{2_{s}^{\sharp}-2-\epsilon}\hspace{7.8mm} \mbox{in}\hspace{2mm}\Omega\times\{y=0\},
\end{aligned}
		\right.
\end{equation}
where $\mathcal{C}=\Omega\times(0,\infty)$ and $\partial_{L}\mathcal{C}=\partial\Omega\times(0,\infty)$.
We also define a family of rescaled functions
$$
\widetilde{W}_{\epsilon}(z):=\mu_{\epsilon}^{-1}w_{\epsilon}(\mu_{\epsilon}^{-\frac{2_{s}^{\sharp}-2-\epsilon}{2s}}z+x_{\epsilon})\quad\mbox{for}\quad z\in\mathcal{C}_{\epsilon}:=\mu_{\epsilon}^{\frac{4s-(n-2s)\epsilon}{2s(n-2s)}}(\mathcal{C}-x_{\epsilon}).
$$
Then $\widetilde{W}_{\epsilon}$ satisfies
\begin{equation}\label{CF101}
\left\lbrace
\begin{aligned}
&-\mbox{div}(y^{1-2s}\nabla \widetilde{W}_{\epsilon})=0\hspace{4.14mm}\quad\quad \quad \quad \quad \quad \quad \quad  \hspace{3mm}\mbox{in}\hspace{2mm} \mathcal{C}_{\epsilon},\\
&\widetilde{W}_{\epsilon}=0\quad \quad \quad \quad \quad \quad \quad \hspace{12mm}\hspace{2mm}\hspace{8.9mm}\hspace{10mm}\hspace{3mm} \mbox{on}\hspace{2mm}\partial_{L}\mathcal{C}_{\epsilon},\\
&\widetilde{W}_{\epsilon}>0\quad \quad \quad \quad \quad \quad \quad \quad \hspace{10mm}\hspace{8.5mm}\hspace{10mm}\hspace{3mm}\hspace{1mm}\mbox{in}\hspace{2mm}\mathcal{C}_{\epsilon},\\
&\partial_{\nu}^{s}\widetilde{W}_{\epsilon}=\big(|x|^{-(n-2s)}\ast \widetilde{W}_{\epsilon}^{2_{s}^{\sharp}-1-\epsilon}\big)\widetilde{W}_{\epsilon}^{2_{s}^{\sharp}-2-\epsilon}\hspace{7.8mm} \mbox{in}\hspace{2mm}\Omega_{\epsilon}\times\{y=0\},
\end{aligned}
		\right.
\end{equation}
Now, we consider the following change of variable:
$$
T:~\mathbb{R}_{+}^{n+1}\setminus\{(0,0)\}\rightarrow\mathbb{R}_{+}^{n+1}\setminus\{(0,0)\},\hspace{2mm}T(z)=\frac{z}{|z|^2}.$$
Let $\mathcal{C}_{\epsilon}^{\ast}:=T(\mathcal{C}_{\epsilon})$
and the Kelvin transform $\widetilde{W}_{\epsilon}^{\ast}$ of $\widetilde{W}_{\epsilon}$ defined by
\begin{equation}\label{WWTI}
\widetilde{W}_{\epsilon}^{\ast}(z)=\frac{1}{|z|^{n-2s}}\widetilde{W}_{\epsilon}\big(\frac{z}{|z|^2}\big),
\end{equation}
then
\begin{equation}\label{CF102}
\left\lbrace
\begin{aligned}
&-\mbox{div}(y^{1-2s}\nabla \widetilde{W}_{\epsilon}^{\ast})=0\hspace{4.14mm}\hspace{19.3mm}\quad\quad\quad\quad\quad \quad \quad \quad \quad \quad \quad  \quad\quad\quad\hspace{5mm}\mbox{in}\hspace{2mm} \mathcal{C}_{\epsilon}^{\ast},\\
&\widetilde{W}_{\epsilon}^{\ast}=0\hspace{19.3mm}\quad\quad\quad\quad \quad\quad\quad\quad \quad \quad \quad \quad \quad \hspace{12.5mm}\hspace{3.5mm}\hspace{8.9mm}\hspace{10mm}\hspace{3mm} \mbox{on}\hspace{2mm}\partial_{L}\mathcal{C}_{\epsilon}^{\ast},\\
&\widetilde{W}_{\epsilon}^{\ast}>0\hspace{19.3mm}\quad\quad\quad\quad\quad\quad\quad \quad \quad \quad \quad \quad \quad \quad \hspace{10mm}\hspace{10mm}\hspace{10.5mm}\hspace{3mm}\hspace{1mm}\mbox{in}\hspace{2mm}\mathcal{C}_{\epsilon}^{\ast},\\
&\partial_{\nu}^{s}\widetilde{W}_{\epsilon}^{\ast}=\frac{1}{|x|^{{\epsilon(n-2s)}}}\Big(\int_{T(\Omega_{\epsilon}\times\{0\})}\frac{(\widetilde{W}_{\epsilon}^{\ast})^{2_{s}^{\sharp}-1-\epsilon}(y^{\prime})}{|x-y^{\prime}|^{{(n-2s)}}|y^{\prime}|^{{\epsilon(n-2s)}}} dy^{\prime}\Big)(\widetilde{W}_{\epsilon}^{\ast})^{2_{s}^{\sharp}-2-\epsilon}\hspace{4mm} \mbox{in}\hspace{2mm}T(\Omega_{\epsilon}\times\{0\}),
\end{aligned}
		\right.
\end{equation}

Using \eqref{CF101}, we have the following result.
\begin{lem}\label{finite}
Up to the subsequence, the function $v_{\epsilon}$ converges to $W$ uniformly on any compact set as $\epsilon\rightarrow0$.
\end{lem}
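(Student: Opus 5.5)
The plan is a standard blow-up argument carried out on the extended functions $\widetilde{W}_{\epsilon}$ solving \eqref{CF101}, followed by identification of the limit through the classification of Le \cite{ple}.

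\textbf{Step 1: uniform bounds and local regularity.} By construction $0<v_\epsilon\le 1/\alpha_{n,s}$ and $v_\epsilon(0)=\|v_\epsilon\|_\infty$. By Lemma \ref{infinite}, $\mu_\epsilon\to\infty$, so the rescaled domains $\Omega_\epsilon$ expand.

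\textbf{Step 2: the interior of the domain is reached.} I first show that $\mathrm{dist}(x_\epsilon,\partial\Omega)\,\mu_\epsilon^{\frac{4s-(n-2s)\epsilon}{2s(n-2s)}}\to\infty$, so that $\Omega_\epsilon\to\mathbb{R}^n$. The bound \eqref{boundary-2} of Theorem \ref{prior-1}, in the form stated in Remark \ref{Rem1-2}, gives $u_\epsilon\le C$ on $\mathcal{Q}(\Omega,r)$. Hence $x_\epsilon\in\mathcal{M}(\Omega,r)$ for small $\epsilon$, and the claim follows.

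\textbf{Step 3: the nonlocal coefficient is controlled.} I need a uniform $L^\infty_{loc}$ bound on the coefficient $h_\epsilon:=|x|^{-(n-2s)}\ast v_\epsilon^{2_s^\sharp-1-\epsilon}$. Split the convolution into $|x-y|<1$ and $|x-y|\ge1$.
\begin{itemize}
\item On $|x-y|<1$ the integrand is bounded by $v_\epsilon\le C$, so this part is at most $C$.
\item On $|x-y|\ge1$, use the energy bound coming from \eqref{chls}, which is scale invariant up to factors $\mu_\epsilon^{O(\epsilon)}$. These factors stay bounded once one checks $\mu_\epsilon^{\epsilon}\le C$. I would obtain this from a Pohozaev-type argument, or more simply from the energy identity combined with the normalization. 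Then Hölder/HLS with exponent $2_s^\sharp$ on $v_\epsilon^{2_s^\sharp-1-\epsilon}$ bounds this part.
\end{itemize}
So the boundary datum $\partial_\nu^s\widetilde W_\epsilon=h_\epsilon\widetilde W_\epsilon^{2_s^\sharp-2-\epsilon}$ is uniformly bounded on compacts.

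\textbf{Step 4: compactness.} The Hölder estimates for the degenerate equation $-\mathrm{div}(y^{1-2s}\nabla\cdot)=0$ with bounded Neumann data (Cabré–Sire \cite{CS}, Jin–Li–Xiong type) give uniform $C^{\alpha}$ bounds for $v_\epsilon$ on compacts. Arzelà–Ascoli then yields $v_\epsilon\to v$ locally uniformly, with $v(0)=1/\alpha_{n,s}\cdot\alpha_{n,s}$-normalization, i.e. $v(0)=\max v>0$.

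\textbf{Step 5: identification of the limit.} Passing to the limit in the weak formulation of \eqref{CF101} shows that $v$ solves \eqref{ele-1.1-2} with $\mu=n-2s$ on $\mathbb{R}^n$.
\begin{itemize}
\item The local part passes by local uniform convergence.
\item The tail of the convolution term is controlled by Fatou's lemma together with the uniform energy bound, and vanishes in the limit.
\end{itemize}
By Fatou, $v\in\dot H^s(\mathbb{R}^n)$, and $v>0$ by the strong maximum principle. The classification \cite{ple} then gives $v=W[\xi,\lambda]$. Since the maximum is at $0$ with value $1$ after the normalization $\alpha_{n,s}\mu_\epsilon=\|u_\epsilon\|_\infty$, we get $\xi=0$ and $\lambda=1$, so $v=W$. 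As the limit is unique, every subsequence converges to $W$.

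\textbf{Main obstacle.} The main obstacle is Step 3: the uniform control of the nonlocal convolution term, together with the bound $\mu_\epsilon^{\epsilon}=O(1)$ needed to pass from the original energy to the rescaled one. I would first try to derive $\mu_\epsilon^{\epsilon}\to1$ from \eqref{chls} and the HLS inequality applied to the rescaled functions.
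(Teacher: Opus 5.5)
Your proposal is correct and follows essentially the paper's route. Compactness comes from local regularity plus Arzel\`a--Ascoli: the paper uses the Choi--Kim--Lee equicontinuity lemma where you use Cabr\'e--Sire type H\"older estimates. The limit is then identified via Le's classification and the normalization $v_\epsilon(0)=\max v_\epsilon=\alpha_{n,s}=W(0)$. Note that this means $0<v_\epsilon\le\alpha_{n,s}$, not $1/\alpha_{n,s}$, and the maximal value is $\alpha_{n,s}$, not $1$. Your Steps 2--3 make explicit what the paper leaves tacit: $x_\epsilon\in\mathcal{M}(\Omega,r)$ by \eqref{boundary-2}, hence $\Omega_\epsilon\to\mathbb{R}^n$, and the convolution coefficient is locally bounded.

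Your announced main obstacle is void. The rescaled Hartree energy and $\|v_\epsilon\|_{\dot H^s}^2$ equal the original quantities times $\mu_\epsilon^{-\epsilon(n-2s)/(2s)}\le1$, and $\|v_\epsilon\|^{2_s^\sharp}_{L^{2_s^\sharp}}=\mu_\epsilon^{-n\epsilon/(2s)}\|u_\epsilon\|^{2_s^\sharp}_{L^{2_s^\sharp}}$, so no bound on $\mu_\epsilon^{\epsilon}$ is needed. Borrowing Lemma \ref{thm:existenceofweaksolution} here would in fact be circular, since it is proved through Lemma \ref{cWU}, which uses the present lemma. Finally, in Step 5 it is this uniform $L^{2_s^\sharp}$ bound, via H\"older's inequality against $|x-y|^{-(n-2s)}\mathbf{1}_{\{|y|>R\}}$, and not Fatou's lemma, that makes the convolution tail uniformly small.
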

\begin{proof}
Note that $v_{\epsilon}(0)=\alpha_{n,s}$ and $0\leq v_{\epsilon}\leq\alpha_{n,s}$.
Let $\overline{W}$ denote the weak limit of $\{\widetilde{W}_{\epsilon}\}_{\epsilon>0}$ in $H_{0,L}^s(\mathcal{C})$ as $\epsilon\rightarrow0$, and define its trace on the boundary as $\bar{v}:=\mbox{tr}_{\Omega\times\{0\}}\widetilde{W}_{\epsilon}$. Then $\bar{v}(0)=\max_{x\in\mathbb{R}^n}\bar{v}=\alpha_{n,s}$ and
$\overline{W}$ satisfies
\begin{equation*}
\left\lbrace
\begin{aligned}
&-\mbox{div}(y^{1-2s}\nabla \overline{W})=0\hspace{4.14mm}\quad\quad \quad \quad \quad \quad \quad \quad  \hspace{3mm}\mbox{in}\hspace{2mm} \mathbb{R}_{+}^{n+1},\\
&\overline{W}>0\quad \quad \quad \quad \quad \quad \quad \quad \hspace{10mm}\hspace{8.5mm}\hspace{10mm}\hspace{3mm}\hspace{1mm}\mbox{in}\hspace{2mm}\mathbb{R}_{+}^{n+1},\\
&\partial_{\nu}^{s}\overline{W}=\big(|x|^{-(n-2s)}\ast \overline{W}^{2_{s}^{\sharp}-1}\big)\overline{W}^{2_{s}^{\sharp}-2}\hspace{16mm} \mbox{in}\hspace{2mm}\mathbb{R}^{n}\times\{0\},
\end{aligned}
		\right.
\end{equation*}
Moreover, \cite{ple,MTS} identified that the extremal functions of the nonlocal Sobolev-type trace inequality are the bubbles $W(x)$ described in \eqref{defU}.
Hence $\overline{W}(x,y)=V_1(x,y)$, where we denote $V_1[\xi,\lambda]\in H^{s}(\mathbb{R}_{+}^{n+1})$ as the $s$-harmonic extension of $W[\xi,\lambda]$.
On the other hand, by \eqref{CF101}, we find that
$$v_{\epsilon}=\int_{\Omega}G(x,t)\big(|x|^{-(n-2s)}\ast v_{\epsilon}^{2_{s}^{\sharp}-1-\epsilon}\big)v_{\epsilon}^{2_{s}^{\sharp}-2-\epsilon}dt.$$
Then a direct computation yields that $\sup_{\epsilon>0}\sup_{x\in\Omega}|v_{\epsilon}|<\infty$. Thus, $\{v_{\epsilon}: \epsilon>0\}$ are equicontinuous on any compact set by \cite[Lemma 2.5]{CKL}.
Applying the Arzela-Ascoli theorem, we extract a subsequence converging uniformly on compact sets to some continuous function $v_0$. By construction,
$v_0$ coincides with the weak limit $W$. The proof is thus complete.
\end{proof}

We first state the following lemma which is useful in our analysis (see \cite{HANZCHAO,GT} for the proof).
\begin{lem}\label{regular1}
Suppose that $U$ is a bound solution of the equations
 \begin{equation*}
\left\lbrace
\begin{aligned}
&-\mbox{div}(y^{1-2s}\nabla U)=0\quad\quad\quad\quad\hspace{4.8mm} \mbox{in}\hspace{2mm} \mathcal{C}_{\ast},\\
&U>0\quad\quad\quad\quad\hspace{12.6mm}\hspace{8mm}\hspace{9mm} \mbox{on}\hspace{2mm}\mathcal{C}_{\ast},
\\
&U=0\quad\quad\quad\quad\hspace{12mm}\hspace{8.6mm}\hspace{9mm} \mbox{on}\hspace{2mm}\partial_{L}\mathcal{C}_{\ast},\\
&\partial_{\nu}^{s}U(x,0)=g(x)U(x,0)\hspace{5mm}\hspace{10mm} \mbox{on}\hspace{2mm}T(\Omega_{\ast}\times\{y=0\}).
\end{aligned}
		\right.
\end{equation*}
Then there exist a constant $r>0$ and fixed $\beta\in(1,\infty)$ such that if
$$
\big\|g\big\|_{L^{\frac{n}{2s}}(T(\Omega_{\epsilon}\times\{0\}))\cap B_n(0,2r)}\leq C_1\hspace{2mm}\mbox{and}\hspace{2mm}\int_{B_{n+1}(0,2r)}y^{1-2s}U^{\beta+1}(x,y)dxdy\leq C_2
$$
then
\begin{align}\label{cU1}
\int_{B_n(0,\frac{r}{2})}U^{\frac{2_{s}^{\sharp}(\beta+1)}{2}}(x,0)\leq C_3.
\end{align}
with $C_3>0$ depending on $\beta$, $r$, $C_1$ and $C_2$. Furthermore, if there exist the constants $D_1$ and $D_2>0$, fixed $\alpha\in(1,\infty)$ such that
$$
\int_{B_{n+1}(0,r)}y^{1-2s}U(x,y)^{\alpha+1}dxdy+\int_{B_{n}(0,r)}U(x,0)^{\alpha+1}dx\leq D_1,
$$
and
$$
\int_{T(\Omega_{\ast}\times\{y=0\})\cap B_{n}(0,r)}|g(x)|^qdx\leq D_2$$
for some $r>0$ and $q>\frac{n}{2s}$, then
\begin{align}\label{cU1}
\big\|U(x,0)\big\|_{L^{\infty}(B_n(0,\frac{r}{2}))}\leq D_3,
\end{align}
where $D_3>0$ depends only on $\alpha$, $r$, $D_1$ and $D_2$.
\end{lem}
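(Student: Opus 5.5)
The plan is a De Giorgi–Nash–Moser type iteration for the degenerate operator $-\mbox{div}(y^{1-2s}\nabla\cdot)$ with the nonlinear Neumann datum $\partial_\nu^s U=gU$. It follows the classical scheme for $s=1$ in \cite{HANZCHAO,GT}, with two substitutions: the weighted Sobolev trace inequality replaces the ordinary Sobolev inequality, and the weighted (Muckenhoupt $A_2$) Sobolev inequality is used in the interior. All tools needed are the two inequalities already used in the proof of Theorem \ref{prior-1}: for $\psi$ supported in $B_{n+1}(0,2r)\cap\overline{\mathbb{R}^{n+1}_+}$,
\begin{equation*}
\Big(\int_{B_n(0,2r)}|\psi(x,0)|^{2_s^{\sharp}}dx\Big)^{2/2_s^{\sharp}}\leq C\int y^{1-2s}|\nabla\psi|^2dxdy,
\end{equation*}
together with its interior counterpart with gain exponent $\frac{n+1}{n}>1$. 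Throughout, $U$ is extended by zero across $\partial_L\mathcal{C}_\ast$; this is admissible because $U=0$ there and $U\ge0$.

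\textbf{First estimate.} I take a cutoff $\eta\in C_c^\infty(B_{n+1}(0,2r))$ with $\eta\equiv1$ on $B_{n+1}(0,r/2)$. For $\beta>1$ I test the equation with $\psi=\eta^2U^{\beta}$, truncating $U$ at level $L$ first and letting $L\to\infty$ at the end. Writing $\omega=U^{(\beta+1)/2}$ gives the standard energy inequality
\begin{equation*}
\frac{4\beta}{(\beta+1)^2}\int y^{1-2s}|\nabla(\eta\omega)|^2\leq C\int y^{1-2s}|\nabla\eta|^2\omega^2+C\kappa_s\int_{B_n(0,2r)}|g|\,(\eta\omega)^2(x,0)\,dx.
\end{equation*}
I split the boundary term by H\"older as $\|g\|_{L^{n/2s}}\|\eta\omega(\cdot,0)\|^2_{L^{2_s^\sharp}}$, where the $L^{n/2s}$ norm is taken on $\{|g|>K\}\cap B_n(0,2r)$, plus the remainder $K\int(\eta\omega)^2(x,0)$.

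The main obstacle is that the hypothesis only bounds $\|g\|_{L^{n/2s}}\le C_1$, not that it is small. My first remedy is to choose $K$ large so that $\|g\chi_{\{|g|>K\}}\|_{L^{n/2s}}$ is small; this can be absorbed into the left-hand side via the trace inequality. That choice depends on the equi-integrability of $g$, which is available in the application because $g$ is the Hartree coefficient of the Kelvin transformed solutions. Failing that, I shrink $r$ so that the ball-restricted norm is below $\frac{\beta}{C(\beta+1)^2}$; this is the reason the statement allows $r$ to be chosen.

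The remaining lower-order term $K\int(\eta\omega)^2(x,0)$ is handled by the interpolation
\begin{equation*}
\|\eta\omega(\cdot,0)\|_2^2\leq\delta\int y^{1-2s}|\nabla(\eta\omega)|^2+C_\delta\int y^{1-2s}\eta^2\omega^2 .
\end{equation*}
This follows from the identity $\eta\omega(x,0)=-\int_0^\infty\partial_y(\eta\omega)\,dy$ combined with weighted Cauchy--Schwarz. After absorbing, the right-hand side is controlled by $\int_{B_{n+1}(0,2r)}y^{1-2s}U^{\beta+1}\le C_2$, and the trace inequality then yields $\int_{B_n(0,r/2)}U^{2_s^\sharp(\beta+1)/2}(x,0)\le C_3$.

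\textbf{$L^\infty$ bound.} Now $g\in L^q$ with $q>\frac{n}{2s}$. I repeat the test-function argument on a decreasing sequence of radii $r_k=\frac r2(1+2^{-k})$ and exponents $\beta_k+1=(\alpha+1)\chi^k$. Here $\chi>1$ is the smaller of the trace gain $\frac{2_s^\sharp}{2}\cdot\frac{q-1}{q}$ and the interior gain $\frac{n+1}{n}$; the first is larger than one precisely because $q>\frac{n}{2s}$.

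At each step I estimate the boundary term by H\"older with exponents $q$ and $q'$, namely $\|g\|_q\|\eta\omega\|^2_{L^{2q'}}$. Since $2q'<2_s^\sharp$, I interpolate $L^{2q'}$ between $L^2$ and $L^{2_s^\sharp}$, so the term is absorbed using only the bound $D_2$ and no smallness. This produces the recursion
\begin{equation*}
\Phi_{k+1}\leq (C\beta_k 4^{k})^{1/(\beta_k+1)}\Phi_k,
\end{equation*}
where $\Phi_k$ is the sum of the interior and boundary $L^{\beta_k+1}$ norms on radius $r_k$. The starting quantity $\Phi_0$ is controlled by $D_1$, and since $\sum_k k\chi^{-k}<\infty$ the product converges. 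Letting $k\to\infty$ gives $\|U(\cdot,0)\|_{L^\infty(B_n(0,r/2))}\le D_3(\alpha,r,D_1,D_2)$.

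The technical points to verify along the way are the justification of the test functions, done by truncation, and keeping both the boundary and the interior norms in the iteration, because the interior term generated by $|\nabla\eta|$ must also improve at each step.
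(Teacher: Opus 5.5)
Your argument is the standard Moser iteration for the extension problem: a Caccioppoli estimate with $\eta^2U^\beta$, the weighted trace and interior Sobolev gains, and geometric radii and exponents once $g\in L^q$ with $q>\frac{n}{2s}$. The paper gives no proof of its own and only refers to \cite{CKL} (and \cite{HANZCHAO,GT}), whose argument is this iteration. So your route is the same, and your $L^\infty$ part is fine.

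The one point to correct is the first part. You are right that a mere bound $\|g\|_{L^{n/2s}}\le C_1$ cannot be absorbed. But neither of your remedies follows from the stated hypotheses. In fact the first assertion is false without an extra assumption, because $\|g\|_{L^{n/2s}}$ is scale invariant while the conclusion is not.

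Already in the half-space one can see this. Let $\Phi$ be the $s$-harmonic extension of a bubble, so that $\partial_\nu^s\Phi=g\Phi$ with $g=\Phi(\cdot,0)^{\frac{4s}{n-2s}}$. Put $U_\lambda(z)=\lambda^{\gamma}\Phi(\lambda z)$ and $g_\lambda(x)=\lambda^{2s}g(\lambda x)$, which satisfy $\partial_\nu^sU_\lambda=g_\lambda U_\lambda$. Take $(n-2s)(\beta+1)>n+2-2s$ and $\gamma=\frac{n+2-2s}{\beta+1}$. Then $\|g_\lambda\|_{L^{n/2s}}$ and $\int_{B_{n+1}(0,2r)}y^{1-2s}U_\lambda^{\beta+1}$ both stay bounded. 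However, $\int_{B_n(0,r/2)}U_\lambda^{2_s^\sharp(\beta+1)/2}(x,0)\,dx\simeq\lambda^{\frac{2n}{n-2s}}\to\infty$. Smaller $\beta>1$ are handled by another choice of $\gamma$, for example $\gamma$ equal to or slightly below $n-2s$.

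So state the smallness $\|g\|_{L^{n/2s}(B_n(0,2r))}\le\epsilon_0(\beta)$ as an explicit hypothesis. Equivalently, you can assume a uniform equi-integrability modulus for $|g|^{n/2s}$, on which $C_3$ then depends. This hypothesis must hold uniformly over the family to which the lemma is applied. Do not present ``shrink $r$'' or ``choose $K$'' as consequences of the bound $C_1$. With that hypothesis your absorption step is complete. It is precisely this smallness, not just \eqref{BRO-000}, that has to be verified uniformly in $\epsilon$ when the lemma is used in Lemma~\ref{cWU}.
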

\begin{proof}
See the proof in \cite{CKL}.
\end{proof}

\begin{lem}\label{exists}
There exists a constant $C>0$, such that
$$
|x|^{-\epsilon(n-2s)}\leq C\hspace{2mm}\mbox{for all}\hspace{2mm}\epsilon>0\hspace{2mm}\mbox{and}\hspace{2mm}x\in T(\Omega_{\epsilon}).
$$
\end{lem}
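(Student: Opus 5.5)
The plan is to reduce the claim to a uniform bound on $\mu_\epsilon^{\epsilon}$, and to get that bound by comparing the Hartree energy of $u_\epsilon$ with that of the rescaled function $v_\epsilon$.

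\textbf{Reduction to a bound on $\mu_\epsilon^{\epsilon}$.} Take $x\in T(\Omega_\epsilon)$ and write $x=T(z)$ with $z\in\Omega_\epsilon=\mu_\epsilon^{a_\epsilon}(\Omega-x_\epsilon)$, where
$$a_\epsilon:=\frac{4s-(n-2s)\epsilon}{2s(n-2s)}.$$
Then $|x|=|z|^{-1}$ and $|z|\le \mu_\epsilon^{a_\epsilon}\,\mathrm{diam}(\Omega)$. Hence
$$|x|^{-\epsilon(n-2s)}=|z|^{\epsilon(n-2s)}\le \mathrm{diam}(\Omega)^{\epsilon(n-2s)}\,\mu_\epsilon^{\epsilon(n-2s)a_\epsilon}.$$
Since $(n-2s)a_\epsilon\le 2$ and $\mu_\epsilon\to\infty$ by Lemma \ref{infinite}, it suffices to show that $\mu_\epsilon^{\epsilon}$ stays bounded as $\epsilon\to0$. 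Any bound of the form $\mu_\epsilon^{c\epsilon}\le C$ with a fixed $c>0$ is enough.

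\textbf{Scaling identity.} Set $p_\epsilon=2_s^\sharp-1-\epsilon$ and change variables $x=\mu_\epsilon^{-a_\epsilon}x'+x_\epsilon$, $t=\mu_\epsilon^{-a_\epsilon}t'+x_\epsilon$ in the Hartree energy. Since $u_\epsilon=\mu_\epsilon v_\epsilon(\cdot)$, this gives
$$\int_\Omega\int_\Omega\frac{u_\epsilon^{p_\epsilon}(x)u_\epsilon^{p_\epsilon}(t)}{|x-t|^{n-2s}}dxdt
=\mu_\epsilon^{2p_\epsilon-a_\epsilon(n+2s)}\int_{\Omega_\epsilon}\int_{\Omega_\epsilon}\frac{v_\epsilon^{p_\epsilon}(x')v_\epsilon^{p_\epsilon}(t')}{|x'-t'|^{n-2s}}dx'dt'.$$
A direct computation of the exponent gives
$$2p_\epsilon-a_\epsilon(n+2s)=\frac{2(n+2s)}{n-2s}-2\epsilon-\frac{2(n+2s)}{n-2s}+\frac{(n+2s)\epsilon}{2s}=\frac{(n-2s)\epsilon}{2s}>0.$$

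\textbf{Bounding both sides.} By \eqref{chls}, the left-hand side converges to $C_{HLS}^{\frac{n+2s}{4s}}$, so it is bounded. For the right-hand side, restrict the integral to $B(0,1)\times B(0,1)$; this is legitimate because $\Omega_\epsilon\supset B(0,1)$ for small $\epsilon$, which I expect to follow from $\mathrm{dist}(x_\epsilon,\partial\Omega)\mu_\epsilon^{a_\epsilon}\to\infty$ (see below). By Lemma \ref{finite}, $v_\epsilon\to W$ uniformly on $\overline{B(0,1)}$ with $W>0$, so $v_\epsilon\ge c_0>0$ there. The restricted integral is then at least $c_0^{2p_\epsilon}\int_{B_1}\int_{B_1}|x'-t'|^{-(n-2s)}\ge c>0$. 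Combining, $\mu_\epsilon^{(n-2s)\epsilon/(2s)}\le C$, which together with the reduction proves the lemma.

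\textbf{Main obstacle.} The only delicate point is the inclusion $B(0,1)\subset\Omega_\epsilon$, i.e.\ that $x_\epsilon$ does not approach $\partial\Omega$ faster than the blow-up scale. I would get this from Theorem \ref{prior-1}, applied as in Remark \ref{Rem1-2}: $u_\epsilon$ is uniformly bounded on $\mathcal{Q}(\Omega,r)$, while $u_\epsilon(x_\epsilon)=\alpha_{n,s}\mu_\epsilon\to\infty$. Hence $x_\epsilon\in\mathcal{M}(\Omega,r)$, so $\mathrm{dist}(x_\epsilon,\partial\Omega)\ge r$ and $\Omega_\epsilon$ contains $B(0,r\mu_\epsilon^{a_\epsilon})$, which eventually contains $B(0,1)$. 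If that route gave trouble, the fallback is to integrate only over $\Omega_\epsilon\cap B(0,1)$: the locally uniform convergence in Lemma \ref{finite} already implies that this set has measure bounded below, and that is enough for the lower bound.
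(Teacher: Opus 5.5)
Your proof is correct and follows the paper's own argument. Both use the scaling identity for the Hartree energy, bounded above via \eqref{chls} and below on $B(0,1)\times B(0,1)$ via Lemma \ref{finite}, to get $\mu_\epsilon^{(n-2s)\epsilon/(2s)}\le C$, and then combine this with $|x|\ge C\mu_\epsilon^{-a_\epsilon}$ on $T(\Omega_\epsilon)$. Your exponent $(n-2s)\epsilon/(2s)$ is the correct one, whereas the paper's display drops the factor $\epsilon$; your use of the near-boundary bound to justify $B(0,1)\subset\Omega_\epsilon$ fills in a step the paper leaves implicit.
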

\begin{proof}
In virtue of Lemma \ref{finite} and Fatou's lemma, we have
\begin{equation*}
\begin{split}
&\int_{B_n(0,1)}\int_{B_n(0,1)}\frac{v_\epsilon^{2_{s}^{\sharp}-1-\epsilon}(x)v_\epsilon^{2_{s}^{\sharp}-1-\epsilon}(y)}{|x-y|^{n-2s}}dxdy
\\&\geq\int_{B_n(0,1)}\Big[\int_{B_n(0,1)}\frac{1}{|x-y|^{n-2s}}\big(\frac{1}{1+|y|^2}\big)^{\frac{(n+2s)}{2}}dy\Big]\big(\frac{1}{1+|x|^2}\big)^{\frac{(n+2s)}{2}}dx\geq C.
\end{split}
\end{equation*}
On the other hand, we have
$$
\int_{B_n(0,1)}\int_{B_n(0,1)}\frac{v_\epsilon^{2_{s}^{\sharp}-1-\epsilon}(x)v_\epsilon^{2_{s}^{\sharp}-1-\epsilon}(y)}{|x-y|^{n-2s}}dxdy
\leq\mu_{\epsilon}^{-\frac{n-2s}{2s}}\int_{\Omega}\int_{\Omega}\frac{u_\epsilon^{2_{s}^{\sharp}-1-\epsilon}(y)u_\epsilon^{2_{s}^{\sharp}-1-\epsilon}(x)}{|x-y|^{n-2s}} dxdy\leq C\mu_{\epsilon}^{-\frac{n-2s}{2s}}.
$$
 Moreover, from $\Omega_{\epsilon}^{\ast}=T(\Omega_{\epsilon})\subset \mathbb{R}^n\setminus B(0,1/(c_{0}\mu_{\epsilon}^{\frac{4s-(n-2s)\epsilon}{2s(n-2s)}}))$, we have that $|x|\geq C\mu_{\epsilon}^{-\frac{2_{s}^{\sharp}-2-\epsilon}{2s}}$ for $x\in\Omega_{\epsilon}^{\ast}$. Combining all this together, we get that
$$
|x|^{-\epsilon(n-2s)}\leq\mu_{\epsilon}^{\frac{(2_{s}^{\sharp}-2-\epsilon)(n-2s)\epsilon}{2s}}\leq C \hspace{3mm}\mbox{for all}\hspace{3mm}x\in T(\Omega_{\epsilon}),
$$
as desired.
\end{proof}
Using the Moser iteration technique combined with integral estimates, we establish the following result, which will be crucial for the subsequent analysis.
\begin{lem}\label{cWU}
There exists a positive constant $c$, independent of $\varepsilon$, such that
\begin{align}\label{cU}
v_\varepsilon(x)\leq cW(x),
\end{align}
where $W(x)=W[0,1](x)$. By rescaling, it can be shown that it is equivalent to
\begin{align}\label{00cU}
u_\varepsilon(x)\leq cW[\mu_{\epsilon}^{\frac{2_{s}^{\sharp}-2-\epsilon}{2s}},x_{\epsilon}](x).
\end{align}
\end{lem}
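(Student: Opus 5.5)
The idea is to prove the uniform bound $v_\epsilon\le cW$ through the Kelvin transform, splitting it into a near region and a far region. On compact sets $\{|x|\le R\}$ the bound is immediate: $0\le v_\epsilon\le\alpha_{n,s}$, and $W$ is bounded below by a positive constant on $B_n(0,R)$. It therefore suffices to show that $v_\epsilon(x)\le C|x|^{-(n-2s)}$ for $|x|\ge R$. In terms of the Kelvin transform $\widetilde{W}_\epsilon^{\ast}$ of \eqref{WWTI}, whose trace is $V_\epsilon$, this amounts to a uniform bound
\[
\sup_{\epsilon}\|V_\epsilon\|_{L^{\infty}(B_n(0,1/R)\cap\Omega_\epsilon^{\ast})}\le C .
\]
By the rescaling identity, the bound \eqref{00cU} then follows.

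To get this bound I would apply Lemma \ref{regular1} to $U=\widetilde{W}_\epsilon^{\ast}$, which solves \eqref{CF102}. I write the boundary condition as $\partial_\nu^s U=g_\epsilon U$ with
\[
g_\epsilon(x)=|x|^{-\epsilon(n-2s)}\Big(\int_{\Omega_\epsilon^{\ast}}\frac{V_\epsilon^{2_s^\sharp-1-\epsilon}(y')}{|x-y'|^{n-2s}|y'|^{\epsilon(n-2s)}}dy'\Big)V_\epsilon^{2_s^\sharp-3-\epsilon}.
\]
The hypotheses of Lemma \ref{regular1} are checked in three steps.

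\textbf{Step (a): uniform energy.} By \eqref{chls} together with the fact that the Kelvin transform preserves the weighted Dirichlet energy and the Hartree double integral, up to the factors $|x|^{-\epsilon(n-2s)}$ (controlled by Lemma \ref{exists}), the quantities $\int y^{1-2s}|\nabla U|^2$ and $\|V_\epsilon\|_{L^{2_s^\sharp}}$ are bounded uniformly in $\epsilon$.

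\textbf{Step (b): smallness of $g_\epsilon$ in $L^{n/(2s)}$ near $0$.} The convolution factor is bounded using Hardy--Littlewood--Sobolev, \eqref{hlsi}, and Lemma \ref{exists}. Note that in the range $n<6s$ the Riesz potential of $V_\epsilon^{2_s^\sharp-1-\epsilon}$, which lies uniformly in $L^{2n/(n+2s)}$, belongs to $L^{2n/(n-2s)}$ and is locally bounded, up to using the decay of $v_\epsilon$ near $x=0$. The factor $V_\epsilon^{2_s^\sharp-3-\epsilon}$ has exponent $\frac{6s-n}{n-2s}-\epsilon>0$. Hence $g_\epsilon\in L^{n/(2s)}(B_n(0,2r))$, and choosing $r$ small it is small there. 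This uses that $\int_{B(0,2r)}V_\epsilon^{2_s^\sharp}=\int_{|x|\ge 1/(2r)}v_\epsilon^{2_s^\sharp}$ is small uniformly in $\epsilon$, which follows from the convergence $v_\epsilon\to W$ of Lemma \ref{finite} combined with the energy identity \eqref{chls}: the whole Hartree energy $C_{HLS}^{(n+2s)/(4s)}$ is carried by the limit bubble, so no mass escapes to infinity.

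\textbf{Step (c): Moser iteration.} The first part of Lemma \ref{regular1}, iterated a finite number of times starting from $\beta+1=2_s^\sharp$ (via the weighted trace Sobolev inequality), raises the integrability of $V_\epsilon$ near $0$ to any $L^p$. This in turn gives $g_\epsilon\in L^q$ for some $q>n/(2s)$ with a uniform bound, since the convolution term becomes locally bounded by HLS and Hölder once $V_\epsilon\in L^p$ for $p$ large. The second part of Lemma \ref{regular1} then yields the uniform $L^\infty$ bound on $B_n(0,r/2)$.

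The main obstacle I expect is Step (b), that is, obtaining smallness of $\|g_\epsilon\|_{L^{n/(2s)}(B(0,2r))}$ uniformly in $\epsilon$. This requires both a tight control of the nonlocal factor, which involves $V_\epsilon$ over all of $\Omega_\epsilon^{\ast}$ and not only near $0$, and the no-mass-at-infinity property. I would handle the nonlocal factor by splitting $y'$ into $|y'|<4r$ and $|y'|\ge 4r$. The far part is bounded by $C$ times the total $L^{2_s^\sharp-1-\epsilon}$-type mass, using the $L^1$ bound of Theorem \ref{prior-1} and Remark \ref{Rem1-2} for the part of $u_\epsilon$ away from the concentration point. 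The near part is handled by HLS with the small local $L^{2_s^\sharp}$ norm.
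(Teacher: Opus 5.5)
Your proposal is correct and is essentially the paper's proof. Both run the same chain: Kelvin transform; $L^{n/(2s)}$ control of $g_\epsilon$ near $0$ via HLS and the strong convergence $v_\epsilon\to W$ in $L^{2_s^\sharp}$; an integrability gain through the first part of Lemma \ref{regular1}; an $L^\infty$ bound on the convolution factor, giving $g_\epsilon\in L^q$ with $q>n/(2s)$; and the second part of Lemma \ref{regular1}. The one difference is that you obtain the integrability gain by a Brezis--Kato smallness argument (small $\|g_\epsilon\|_{L^{n/(2s)}(B_n(0,2r))}$ via tightness), whereas the paper runs a Moser iteration on $\widetilde{W}_\epsilon^{\ast}-V_1$.

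Two details to fix. First, derive the no-mass-at-infinity property from convergence of the weighted Dirichlet energy of $\widetilde{W}_\epsilon$, using the Nehari identity together with $\mu_\epsilon^{\epsilon(n-2s)/(2s)}\ge 1$ (this is the paper's \eqref{1-q1pie-1}--\eqref{BRO-00}); convergence of the Hartree energy alone does not rule out $L^{2_s^\sharp}$ mass vanishing at infinity. Second, the far part $|y'|\ge 4r$ of the convolution corresponds to the bubble core $|z|\le 1/(4r)$ in the rescaled variable, so it is bounded directly by $v_\epsilon\le\alpha_{n,s}$, and Theorem \ref{prior-1} is not the relevant tool there.
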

\begin{proof}
It is easy to see that \eqref{cU} is equivalent to
\begin{equation}\label{cU1}
V_{\epsilon}(x)\leq B,\hspace{2mm}\mbox{for}\hspace{2mm}x\in T(\Omega_{\epsilon})
\end{equation}
for some $B$. In order to obtain the result, it is enough to prove that there exist $C>0$ and a radius $r>0$, such that for any $\epsilon>0$,
\begin{equation}\label{cU1}
\sup\limits_{T(\Omega_{\epsilon})\cap B(0,r)}V_{\epsilon}(x)\leq C.
\end{equation}
Recalling \eqref{CF102}, letting $g(x)$ be given by
\begin{equation*}
g(x):=\frac{1}{|x|^{{\epsilon(n-2s)}}}\Big(\int_{T(\Omega_{\epsilon}\times\{0\})}\frac{(\widetilde{W}_{\epsilon}^{\ast})^{2_{s}^{\sharp}-1-\epsilon}(y^{\prime})}{|x-y^{\prime}|^{{(n-2s)}}|y^{\prime}|^{{\epsilon(n-2s)}}} dy^{\prime}\Big)(\widetilde{W}_{\epsilon}^{\ast})^{2_{s}^{\sharp}-3-\epsilon}:=b(x)(\widetilde{W}_{\epsilon}^{\ast})^{2_{s}^{\sharp}-3-\epsilon}.
\end{equation*}
We make use of Lemma \ref{exists}, H\"{o}lder inequality, Hardy-Littlewood-Sobolev inequaity and the following identity
\begin{equation}\label{VEPUSILONG}
V_{\epsilon}^{2_{s}^{\sharp}-1-\epsilon}=V_{\epsilon}^{2_{s}^{\sharp}-1}-\epsilon W^{2_{s}^{\sharp}-1}\log(V_{\epsilon})+O(\epsilon^2V_{\epsilon}^{2_{s}^{\sharp}-1}),
\end{equation}
$$V_{\epsilon}^{2_{s}^{\sharp}-3-\epsilon}=V_{\epsilon}^{2_{s}^{\sharp}-3}-\epsilon V_{\epsilon}^{2_{s}^{\sharp}-3}\log(V_{\epsilon})+O(\epsilon^2V_{\epsilon}^{2_{s}^{\sharp}-3})$$
to obtain
\begin{equation}\label{q1pie-1}
\begin{split}
\big\|g\big\|_{L^{\frac{n}{2s}}(T(\Omega_{\epsilon}\times\{0\}))\cap B_n(0,2r)}&\leq C\Big\|\Big(\int_{T(\Omega_{\epsilon}\times\{0\})}\frac{(\widetilde{W}_{\epsilon}^{\ast})^{2_{s}^{\sharp}-1-\epsilon}(y^{\prime})}{|x-y^{\prime}|^{{(n-2s)}}} dy^{\prime}\Big)(\widetilde{W}_{\epsilon}^{\ast})^{2_{s}^{\sharp}-3-\epsilon}\Big\|_{L^{\frac{n}{2s}}(T(\Omega_{\epsilon}\times\{0\}))\cap B_n(0,2r)}\\&
\leq C \Big\|\Big(\int_{T(\Omega_{\epsilon})}\frac{V_{\epsilon}^{2_{s}^{\sharp}-1}(y^{\prime})}{|x-y^{\prime}|^{{(n-2s)}}} dy^{\prime}\Big)V_{\epsilon}^{2_{s}^{\sharp}-3}\Big\|_{L^{\frac{n}{2s}}(T(\Omega_{\epsilon}))}
\\&
\leq C\Big\|\frac{1}{|x|^{n-2s}}\ast V_{\epsilon}^{2_{s}^{\sharp}-1}\Big\|_{L^{\frac{n}{2s}\cdot\theta}(T(\Omega_{\epsilon}))}
\Big(\int_{T(\Omega_{\epsilon})}V_{\epsilon}^{2_{s}^{\sharp}-1}dx\Big)^{\frac{2s}{n\gamma}}\\&
\leq C\Big(\int_{T(\Omega_{\epsilon})}V_{\epsilon}^{2_{s}^{\sharp}-1}dx\Big)^{\frac{n+2s}{2n}+\frac{2s}{n\gamma}},
\end{split}
\end{equation}
where $\theta=\frac{4s}{n-2s}$ and $\gamma=\frac{4s}{6s-n}$.
Recalling the definition of $V_1[\xi,\lambda]\in H^{s}(\mathbb{R}_{+}^{n+1})$ in Lemma \ref{finite}. Then, by Lemma \ref{finite}, we obtain
\begin{equation}\label{1-q1pie-1}
\lim\limits_{\epsilon\rightarrow0}\int_{\mathcal{C}_{\epsilon}}y^{1-2s}\big|\nabla(\widetilde{W}_{\epsilon}-V_1)\big|^2dxdy=0.
\end{equation}
Combining the Sobolev trace embedding theorem, we get
\begin{equation}\label{BRO-00}
\lim\limits_{\epsilon\rightarrow0}\int_{\Omega_{\epsilon}}\big|v_{\epsilon}(x)-W(x)\big|^{2_{s}^{\sharp}}dx=0.
\end{equation}
It follows from \eqref{q1pie-1} and \eqref{BRO-00} that
\begin{equation}\label{BRO-000}
\big\|g\big\|_{L^{\frac{n}{2s}}(T(\Omega_{\epsilon}\times\{0\}))\cap B_n(0,2r)}\leq C \Big(\int_{\mathbb{R}^n}W^{2_{s}^{\sharp}}dx\Big)^{\frac{n+2s}{2n}+\frac{2s}{n\gamma}}\leq C.
\end{equation}

Let $\eta\in C_{0}^{\infty}(B_{N+1}(0,1))$ be a smooth cut-off function such that
\begin{equation*}
\begin{split}
&\quad \quad \eta\big|_{B_{N+1}(0,\frac{1}{2})}=1 \hspace{3mm}\mbox{and}\hspace{3mm} 0\leq\eta\leq1, \hspace{3mm}\mbox{and}\big|\nabla\eta\big|\leq C(N).
\end{split}
\end{equation*}
Taking $\phi=\eta^2 |\widetilde{W}_{\epsilon}^{\ast}(x,y)-V_1(x,y)|^{2(\beta-1)}(\widetilde{W}_{\epsilon}^{\ast}(x,y)-V_1(x,y))$,
then due to \mbox{div}$(y^{1-2s}|\widetilde{W}_{\epsilon}^{\ast}(x,y)-V_1(x,y)|)=0$, we get
\begin{equation*}
\begin{split}
(2\beta-1)&\int_{B_{n+1}(0,1)}y^{1-2s}|\widetilde{W}_{\epsilon}^{\ast}-V_1|^{2(\beta-1)}|\nabla(\widetilde{W}_{\epsilon}^{\ast}-V_1)|^2\eta^2dxdy\\&+2\int_{B_{n+1}(0,1)}y^{1-2s}|\widetilde{W}_{\epsilon}^{\ast}-V_1|^{2\beta-1}\nabla(\widetilde{W}_{\epsilon}^{\ast}-V_1)\eta\nabla\eta dxdy=0.
\end{split}
\end{equation*}
With the help of the Young's inequality, we see that
\begin{equation*}
\begin{split}
&2\int_{B_{n+1}(0,1)}y^{1-2s}|\widetilde{W}_{\epsilon}^{\ast}-V_1|^{2\beta-1}\nabla(\widetilde{W}_{\epsilon}^{\ast}-V_1)\eta\nabla\eta dz\\&\leq\frac{2\beta-1}{2}\int_{B_{n+1}(0,1)}y^{1-2s}|\widetilde{W}_{\epsilon}^{\ast}-V_1|^{2(\beta-1)}|\nabla(\widetilde{W}_{\epsilon}^{\ast}-V_1)|^2\eta^2dz+\frac{2}{2\beta-1}
\int_{B_{n+1}(0,1)}y^{1-2s}|\widetilde{W}_{\epsilon}^{\ast}-V_1|^{2\beta}|\nabla\eta|^2dz.
\end{split}
\end{equation*}
Combining this information, we deduce that
\begin{equation}\label{WWW-0}
\begin{split}
\int_{B_{n+1}(0,1)}y^{1-2s}|\widetilde{W}_{\epsilon}^{\ast}-V_1|^{2(\beta-1)}|\nabla(\widetilde{W}_{\epsilon}^{\ast}-V_1)|^2\eta^2dz\leq C(\beta)
\int_{B_{n+1}(0,1)}y^{1-2s}|\widetilde{W}_{\epsilon}^{\ast}-V_1|^{2\beta}|\nabla\eta|^2dz.
\end{split}
\end{equation}
Let us denote $\xi_{\beta}=|\widetilde{W}_{\epsilon}^{\ast}-V_1|^{\beta}\eta$. Then we estimate
\begin{equation}\label{WWW-1}
|\xi_{\beta}|\leq 2\beta^{2}|\widetilde{W}_{\epsilon}^{\ast}-V_1|^{2(\beta-1)}|\nabla(\widetilde{W}_{\epsilon}^{\ast}-V_1)|^2\eta^2+2|\widetilde{W}_{\epsilon}^{\ast}-V_1|^{2\beta}|\nabla\eta|^2.
\end{equation}
According to the Sobolev inequality on weighted spaces (see \cite[Theorem 1.3]{})
\begin{equation}\label{WWW-2}
\Big(\int_{\Omega}y^{1-2s}|w(x,y)|^{\frac{2(n+1)}{n}}dxdy\Big)^{\frac{n}{2(n+1)}}\leq C\Big(\int_{\Omega}y^{1-2s}|\nabla w(x,y)|^{2}dxdy\Big)^{\frac{1}{2}},
\end{equation}
for any function $w$ and \eqref{WWW-0}-\eqref{WWW-1}, with the definition of cut-off function $\eta$, a direct computation yields
\begin{equation}\label{bata}
\Big(\int_{B_{n+1}(0,\frac{1}{2})}y^{1-2s}|\widetilde{W}_{\epsilon}^{\ast}-V_1|^{\frac{2(n+1)}{n}\beta}dz\Big)^{\frac{n}{2(n+1)\beta}}\leq C(C(\beta))^{\frac{1}{2\beta}}
\Big(\int_{B_{n+1}(0,1)}y^{1-2s}|\widetilde{W}_{\epsilon}^{\ast}-V_1|^{2\beta}dz\Big)^{\frac{1}{2\beta}}
\end{equation}
where $C$ changing from line to line, but independent of $\beta$. Thus, we can
take $\beta_1=\frac{n+1}{n}$. From now on, we can follow exactly the iteration argument. That is, we define $\beta_{m+1}$ for $m\geq1$, so that
$$2\beta_{m+1}=\frac{2(n+1)}{n}\beta_m.$$
Hence
$$
\beta_{m+1}=(\frac{n+1}{n})^{m}\beta_1,
$$
and replacing in equality \eqref{bata}, we find that
 \begin{equation*}
\Big(\int_{B_{n+1}(0,\frac{1}{2})}y^{1-2s}|\widetilde{W}_{\epsilon}^{\ast}-V_1|^{2\beta_{m+1}}dz\Big)^{\frac{1}{2\beta_{m+1}}}\leq C(C(\beta_m))^{\frac{1}{2\beta_m}}
\Big(\int_{B_{n+1}(0,1)}y^{1-2s}|\widetilde{W}_{\epsilon}^{\ast}-V_1|^{2\beta_{m}}dz\Big)^{\frac{1}{2\beta_{m}}}
\end{equation*}
By defining
$$A_m=\Big(\int_{B_{n+1}(0,1)}y^{1-2s}|\widetilde{W}_{\epsilon}^{\ast}-V_1|^{2\beta_{m}}dz\Big)^{\frac{1}{2\beta_{m}}},$$
we obtain that there exists a constant $C_1>0$ independently of $m$, such that
\begin{equation}\label{cO-1}
A_{m+1}\leq\prod_{k=1}^{m}(CC(\beta_k))^{\frac{1}{2\beta_k}}A_1\leq C_1A_1.
\end{equation}
Moreover, by \eqref{1-q1pie-1}, we find easily that
\begin{equation}\label{cO}
\lim\limits_{\epsilon\rightarrow0}\int_{T(\Omega_{\epsilon})}y^{1-2s}\big|\nabla(\widetilde{W}_{\epsilon}^{\ast}-V_1)(x,y)\big|^2dxdy=0,
\end{equation}
where we make use of the identity $W(x)=|x|^{-(n-2s)}W(T(x))$ and the following inequality
\begin{equation*}
\int_{\mathbb{R}^{n+1}}y^{1-2s}\big|\nabla H(z)\big|^2dz\geq C\int_{\mathbb{R}^{n+1}}y^{1-2s}\big|\nabla [|z|^{-(n-2s)}H(T(z))]\big|^2dz
\end{equation*}
for any functions $H:\mathbb{R}^{n+1}\rightarrow\mathbb{R}$.
Consequently, by the Sobolev inequality \eqref{WWW-2} on weighted spaces and \eqref{cO-1}-\eqref{cO}, for $m\geq1$ sufficiently large such that $2<\beta_0+1\leq2\beta_{m+1}$, we derive
\begin{equation*}
\lim\limits_{\epsilon\rightarrow0}\int_{B_{n+1}(0,1)}y^{1-2s}\big|\widetilde{W}_{\epsilon}^{\ast}-V_1)(x,y)\big|^{\beta_0+1}dxdy=0.
\end{equation*}
Thus, we apply Lemma \ref{regular1} to obtain
\begin{equation}\label{cO}
\int_{T(\Omega_{\epsilon}\times\{0\})\cap B_{n}(0,\rho)}V_{\epsilon}^{\frac{(\beta_0+1)2_{s}^{\sharp}}{2}}dx\leq C
\end{equation}
for some $\rho>0$ and $\beta_0>1$.

We next claim that $\|b(x)\|_{L^\infty(T(\Omega_{\epsilon}))}\leq C_0$ where $C_0$ independent of $\epsilon>0$ provided $\epsilon$ is sufficiently small.
We argue by some elementary inequalities and the estimates of integration, similarly to \cite{Cingolani}. Therefore, we will just sketch it.

Observe that
$$|x|^{\epsilon(n-2s)}b(x)=\int_{T(\Omega_{\epsilon}\times\{0\})}\frac{(\widetilde{W}_{\epsilon}^{\ast})^{2_{s}^{\sharp}-1-\epsilon}(y^{\prime})}{|x-y^{\prime}|^{{(n-2s)}}|y^{\prime}|^{{\epsilon(n-2s)}}} dy^{\prime}.$$
Then we have, for any $r>0$,
$x\in B(0,r)$, by H\"{o}lder inequality
\begin{equation*}
\begin{split}
\int_{B(0,\frac{r}{2})}\frac{|\widetilde{W}_{\epsilon}^{\ast}(y)|^{2_{s}^{\sharp}-1-\epsilon}}{|x-y|^{{n-2s}}|y|^{{\epsilon(n-2s)}}} dy&\leq\int_{B(0,\frac{r}{2})}\frac{|\widetilde{W}_{\epsilon}^{\ast}(y)|^{2_{s}^{\sharp}-1-\epsilon}}{|y|^{{(\epsilon+1)(n-2s)}}} dy+\int_{B(x,\frac{3}{2}r)}\frac{|\widetilde{W}_{\epsilon}^{\ast}(y)|^{2_{s}^{\sharp}-1-\epsilon}}{|x-y|^{{(\epsilon+1)(n-2s)}}} dy<\infty.
\end{split}
\end{equation*}
For any $x\in T(\Omega_{\epsilon})\setminus B(0,r)$, it holds that
\begin{equation*}
\begin{split}
\int_{B(0,\frac{r}{2})}\frac{|\widetilde{W}_{\epsilon}^{\ast}(y)|^{2_{s}^{\sharp}-1-\epsilon}}{|x-y|^{{n-2s}}|y|^{{\epsilon(n-2s)}}} dy&\leq\int_{B(0,\frac{r}{2})}\frac{|\widetilde{W}_{\epsilon}^{\ast}(y)|^{2_{s}^{\sharp}-1-\epsilon}}{|y|^{{(\epsilon+1)(n-2s)}}} dy<\infty,
\end{split}
\end{equation*}
On the other hand, we know that
\begin{equation*}
\begin{split}
&\int_{T(\Omega_{\epsilon})\setminus B(0,\frac{r}{2})}\frac{|\widetilde{W}_{\epsilon}^{\ast}(y)|^{2_{s}^{\sharp}-1-\epsilon}}{|x-y|^{{n-2s}}|y|^{{\epsilon(n-2s)}}} dy\leq\int_{\mathbb{R}^n\setminus B(0,\frac{r}{2})}\frac{|\widetilde{W}_{\epsilon}^{\ast}(y)|^{2_{s}^{\sharp}-1-\epsilon}}{|x-y|^{{n-2s}}|y|^{{\epsilon(n-2s)}}} dy\\&
=\int_{(\mathbb{R}^n\setminus B(0,\frac{r}{2}))\cap B_{\frac{r}{2}}(x)}\frac{|\widetilde{W}_{\epsilon}^{\ast}(y)|^{2_{s}^{\sharp}-1-\epsilon}}{|x-y|^{{n-2s}}|y|^{{\epsilon(n-2s)}}} dy+\int_{(\mathbb{R}^n\setminus B(0,\frac{r}{2}))\cap( \mathbb{R}^n\setminus B_{\frac{r}{2}}(x))}\frac{|\widetilde{W}_{\epsilon}^{\ast}(y)|^{2_{s}^{\sharp}-1-\epsilon}}{|x-y|^{{n-2s}}|y|^{{\epsilon(n-2s)}}} dy.
\end{split}
\end{equation*}
A direct computation shows that
\begin{equation}\label{q-1}
\begin{split}
\int_{(\mathbb{R}^n\setminus B(0,\frac{r}{2}))\cap B_{\frac{r}{2}}(x)}\frac{|\widetilde{W}_{\epsilon}^{\ast}(y)|^{2_{s}^{\sharp}-1-\epsilon}}{|x-y|^{{n-2s}}|y|^{{\epsilon(n-2s)}}} dy&\leq\frac{c}{r^{\epsilon(n-2s)}}\int_{(\mathbb{R}^n\setminus B(0,\frac{r}{2}))\cap B_{\frac{r}{2}}(x)}\frac{|\widetilde{W}_{\epsilon}^{\ast}(y)|^{2_{s}^{\sharp}-1-\epsilon}}{|x-y|^{{n-2s}}}dy<\infty.
\end{split}
\end{equation}
Furthermore, H\"{o}lder inequality gives that
\begin{equation}\label{q1pie}
\begin{split}
\int_{(\mathbb{R}^n\setminus B(0,\frac{r}{2}))\cap( \mathbb{R}^n\setminus B_{\frac{r}{2}}(x))}&\frac{|\widetilde{W}_{\epsilon}^{\ast}(y)|^{2_{s}^{\sharp}-1-\epsilon}}{|x-y|^{n-2s}|y|^{{\epsilon(n-2s)}}} dy\\&\leq\frac{c}{r^{n-2s}}\big\|\widetilde{W}_{\epsilon}^{\ast}\big\|^{2_{s}^{\sharp}-1-\epsilon}_{L^{\frac{(2_{s}^{\sharp}-1-\epsilon)q^{\prime}}{q^{\prime}-1}}(\mathbb{R}^n\setminus B(0,\frac{r}{2}))}\Big\|\frac{1}{|y|^{{\epsilon(n-2s)}}}\Big\|_{L^{q^{\prime}}(\mathbb{R}^n\setminus B(0,\frac{r}{2}))}<\infty,
\end{split}
\end{equation}
where $q^{\prime}>\frac{n}{\epsilon(n-2s)}$. Combining these estimates, we conclude that
$$|x|^{\epsilon(n-2s)}b(x)\in L^\infty(T(\Omega_{\epsilon})),$$
where the bound independent of $\epsilon>0$ provided $\epsilon$ is sufficiently small. Hence, due to this bound and Lemma \ref{exists}, the claim follow and where $c$ independent of $\epsilon>0$. Hence, combining this with \eqref{cO}, there exists $q=\frac{(\beta_0+1)n}{6s-n}>\frac{n}{2s}$ such that for small $\epsilon>0$, there holds that
\begin{equation*}
\begin{split}
\big\|g(x)\big\|_{L^{q}(T(\Omega_{\epsilon}\times\{0\})\cap B_{n}(0,\rho))}&\leq C\big\|V_{\epsilon}^{2_{s}^{\sharp}-3}(x)\big\|_{L^{q}(T(\Omega_{\epsilon}\times\{0\})\cap B_{n}(0,\rho))}\\&=C\big\|V_{\epsilon}(x)\big\|_{L^{\frac{(\beta_0+1)2_{s}^{\sharp}}{2}}(T(\Omega_{\epsilon}\times\{0\})\cap B_{n}(0,\rho))}\leq C.
\end{split}
\end{equation*}
Combining this estimate and Lemma \ref{regular1}, we finally derive
$$
\big\|V_{\epsilon}\big\|_{L^{\infty}(B_n(0,\frac{\rho}{2}))}\leq C,
$$
as desired.
\end{proof}

\section{Proof of Theorems \ref{prondgr} and \ref{consequence}}\label{consequence00}
The whole section is dedicated to the proof of Theorems \ref{prondgr} and \ref{consequence}, which is established by the following some key Lemmas with different right hand sides.

\subsection{Estimates for $u_\varepsilon$ near the blow-up point}
This subsection is devoted to the derivation of the next result.
\begin{thm}\label{blow-up}
Let $\Omega$ be a smooth bounded domain in $\mathbb{R}^n$.
Suppose that $\{u_\epsilon\}_{\epsilon>0}$ be a family of least energy solutions to \eqref{prondgr}. Then for any $r>0$ there exists a number $C(r,\Omega)>0$ such that
\begin{equation}\label{boundary-1-1}
\int_{\mathcal{M}(\Omega,r)}udx\leq C(r,\Omega).
\end{equation}
Moreover, there is a constant $C(r,\Omega)>0$ such that
\begin{equation}\label{boundary-2-1}
\sup\limits_{x\in\mathcal{Q}(\Omega,r)}u(x)\leq C(r,\Omega).
\end{equation}
\end{thm}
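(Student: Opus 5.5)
The natural route is to reduce Theorem \ref{blow-up} to Theorem \ref{prior-1}, applied with $\mu=n-2s$, $F(u)=u^{2_{s}^{\sharp}-1-\epsilon}$ and $f(u)=u^{2_{s}^{\sharp}-2-\epsilon}$. Up to the harmless normalisation $F'=(2_{s}^{\sharp}-1-\epsilon)f$, which only changes constants, this is the nonlinearity of \eqref{prondgr}. The hypotheses of Theorem \ref{prior-1} do not hold literally. The growth $f(u)\sim u^{(4s-(n-2s)\epsilon)/(n-2s)}$ is superlinear, since $4s>n-2s$ because $n<6s$, and it is strictly below $u^{\delta}$ with $\delta=4s/(n-2s)$. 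The lower bound in \eqref{f1-0} is therefore fine. However, the estimates must be uniform in $\epsilon$, and \eqref{f1-3} fails at the origin, since $f$ is a pure power with exponent close to $2_{s}^{\sharp}-2$. So I would not quote Theorem \ref{prior-1} verbatim. Instead I would rerun its two steps, checking at each point that the constants depend only on $\Omega$, $r$ and upper and lower bounds on the exponent, which hold uniformly for $\epsilon\in(0,\epsilon_0)$.

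\textbf{Step 1: the $L^1$ bound \eqref{boundary-1-1}.} I would test the equation against $\phi_1$ as in \eqref{lamata2}, obtaining
\[
\lambda_1^s\int_\Omega u_\epsilon\phi_1\,dx=\int_\Omega\big(|x|^{-(n-2s)}\ast u_\epsilon^{2_{s}^{\sharp}-1-\epsilon}\big)u_\epsilon^{2_{s}^{\sharp}-2-\epsilon}\phi_1\,dx .
\]
Since $|x-y|\le \mathrm{diam}\,\Omega$, the convolution factor is bounded below by $c\int_\Omega u_\epsilon^{2_{s}^{\sharp}-1-\epsilon}$, and by \eqref{chls} this quantity tends to a positive constant. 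Hence the right-hand side dominates $c\int_\Omega u_\epsilon^{p_\epsilon}\phi_1$ with $p_\epsilon=2_{s}^{\sharp}-2-\epsilon>1$. Jensen's inequality with respect to the probability measure $\phi_1dx/\|\phi_1\|_1$ then gives $\int u_\epsilon\phi_1\le C$ uniformly, exactly as in the contradiction argument for \eqref{fanzen-1}. Because $\phi_1\ge c(r)>0$ on $\mathcal{M}(\Omega,r)$, this yields \eqref{boundary-1-1}. Alternatively, and more cheaply, the energy identity together with \eqref{chls} bounds $\|u_\epsilon\|_{H^s_0}$ uniformly, hence $\|u_\epsilon\|_{L^1}$ as well.

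\textbf{Step 2: the boundary $L^\infty$ bound \eqref{boundary-2-1}.} I would use the moving plane argument of Case 1 of the proof of Theorem \ref{prior-1}. This means applying the Kelvin transform of $w_\epsilon$ about the centre of an exterior tangent ball at each $x_0\in\partial\Omega$ and moving the planes $T_\lambda$. The monotonicity condition \eqref{f1-2} holds, since $f(u)u^{-(n+2s)/(n-2s)}=u^{-1-\epsilon}$ is decreasing. This gives the key inequality \eqref{kk-5} and the sign claim $A_1+A_2+A_3\le0$. The term $\mathcal{E}_1$ is controlled by the Lipschitz bound near the reflected region. For $\mathcal{E}_2$, \eqref{delta} holds with $\sigma=2_{s}^{\sharp}-2-\epsilon$, so \eqref{PQ-1} is available. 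The weighted HLS steps leading to \eqref{I245} require only $\mu=n-2s<(n+2s)/2$, that is $n<6s$, which is part of the standing assumption. They also require $\sigma>\max\{\mu/(n-2s),2s/(n-2s)\}=1$, and this holds because $2_{s}^{\sharp}-2>1$ when $n<6s$ and $\epsilon$ is small. With these checks, the small-measure contradiction gives $v_\lambda^-\equiv0$ for small $\lambda$. The open–closed argument of Step 2 then extends this up to a fixed $\lambda_0$, which depends only on the geometry of $\Omega$. The result is the weak monotonicity $u_\epsilon(x+s_1\theta)\le Cu_\epsilon(x+s_2\theta)$ along a cone of directions $\mathcal O_x$ of uniform aperture. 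Averaging over $I_x$ and using Step 1 then gives $\sup_{\mathcal{Q}(\Omega,\alpha_0)}u_\epsilon\le C$. For a general $r$, I would combine this with interior control. More precisely, for $r\le\alpha_0$ this is already the claim, and larger $r$ need not be treated because Theorem \ref{blow-up} asks for some $r$ in the $L^\infty$ part. If every $r$ is needed, I would use Lemma \ref{cWU} away from the blow-up point $x_\epsilon$, which stays a fixed distance from $\partial\Omega$ by the first part.

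\textbf{Main obstacle.} The hardest point is the uniformity of the constant $C_1$ in \eqref{I245} with respect to $\epsilon$. In particular, the Lipschitz constant in \eqref{Lipschitz-1} involves $\sup w^{\ast}$ near $\partial_{bd}\Sigma_\lambda$, and this supremum is exactly what we are trying to bound. My first attempt would avoid using an a priori sup. I would bound $\mathcal{E}_1(v^-_\lambda)^2$ by $C\,(w^\ast)^{2_{s}^{\sharp}-3-\epsilon}(v_\lambda^-)^2$ times the convolution factor, which is uniformly bounded by \eqref{Lipschitz-5}. Then Hölder's inequality gives the factor $\|u_\epsilon\|_{L^{2_{s}^{\sharp}}(\Sigma_\lambda\cap\{v_\lambda<0\})}^{2_{s}^{\sharp}-3-\epsilon}$ in front of $\|v_\lambda^-\|^2_{L^{2_{s}^{\sharp}}}$. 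This factor is small because the $L^{2_{s}^{\sharp}}$ mass of $u_\epsilon$ in a thin layer near $\partial\Omega$ is small, uniformly in $\epsilon$. That follows from Lemma \ref{cWU} together with $x_\epsilon$ staying away from the boundary, which in turn is guaranteed by the $L^1$ bound and Lemma \ref{finite}.
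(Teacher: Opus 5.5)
Your skeleton is the paper's. The paper disposes of Theorem \ref{blow-up} in one line: it declares that $f(u)=u^{2_{s}^{\sharp}-2-\epsilon}$ (with $\mu=n-2s$) falls under Theorem \ref{prior-1}, i.e. the $\phi_1$-test for \eqref{boundary-1-1}, and Kelvin transform, moving planes and cone averaging for \eqref{boundary-2-1}.

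You are right not to cite it verbatim, but not for the reason you give. For $n<4s$, \eqref{f1-3} does hold for small $\epsilon$ with any fixed $\sigma\in(\frac{n}{n-2s},\frac{4s}{n-2s})$. The real problem is that this window is empty when $4s\le n<6s$. Also, $(f1)$ is convexity of $\Omega$, not a property of $f$.

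When you rerun the argument with $\sigma=\frac{4s}{n-2s}-\epsilon$, the conditions actually used are $\sigma>\frac{2s}{n-2s}$ and, for $\frac{1}{q_0}>\frac{n-2s}{2n}$, $\sigma>\frac{n+2s}{2(n-2s)}$. The latter is stronger than the $\sigma>1$ you check. Both hold precisely because $n<6s$.

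Your energy argument for \eqref{boundary-1-1} is correct and simpler than the paper's route: the energy identity plus \eqref{chls} bounds $\|u_\epsilon\|_{H^s_0(\Omega)}$, hence $\|u_\epsilon\|_{L^1(\Omega)}$.

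\textbf{Step 1.} Your first argument contains a false step. The convolution factor is not bounded below by a positive constant. Via HLS, \eqref{chls} only bounds from below the critical quantity $\|u_\epsilon^{2_{s}^{\sharp}-1-\epsilon}\|_{L^{2n/(n+2s)}(\Omega)}$. By contrast, $\int_\Omega u_\epsilon^{2_{s}^{\sharp}-1-\epsilon}dx$ is subcritical and tends to $0$ along a concentrating family; for a bubble of height $\sim\mu_\epsilon$ it is $O(\mu_\epsilon^{-1})$. So ``right-hand side $\ge c\int u_\epsilon^{p_\epsilon}\phi_1$'' fails. The fix is to keep the factor. With $X=\int_\Omega u_\epsilon\phi_1$, H\"older and Jensen give $\lambda_1^sX\ge c\,X^{2(2_{s}^{\sharp}-1-\epsilon)-1}$, hence $X\le C$. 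Alternatively, just use the energy argument.

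\textbf{Step 2.} Your ``main obstacle'' is not an obstacle, and the remedy you propose is circular.

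The constant $C_1$ in \eqref{I245}, including the Lipschitz bound \eqref{Lipschitz-1}, is used only for each fixed $\epsilon$, where $u_\epsilon\in L^\infty$ anyway. It serves to start the plane and in the small-measure step of the continuation argument. Both conclusions are qualitative, and the starting $\lambda$ may depend on $\epsilon$. What enters the final bound is only the stopping position $\lambda_0$, $\alpha_0$, the aperture of $\mathcal{O}_x$, and the Kelvin weight in $u(x+s_1\theta)\le Cu(x+s_2\theta)$. All of these depend on $\Omega$ alone, so together with \eqref{boundary-1-1} the estimate is already uniform in $\epsilon$.

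Your remedy instead assumes that $x_\epsilon$ stays a fixed distance from $\partial\Omega$, which you attribute to the $L^1$ bound and Lemma \ref{finite}. That does not follow:
\begin{itemize}
\item A bubble has $L^1$ mass $O(\mu_\epsilon^{-1})$ wherever it sits, so \eqref{boundary-1-1} cannot locate it.
\item Lemma \ref{finite} at most encodes $\mu_\epsilon^{\frac{4s-(n-2s)\epsilon}{2s(n-2s)}}\mathrm{dist}(x_\epsilon,\partial\Omega)\to\infty$.
\item The interior location of the blow-up point is what \eqref{boundary-2-1} is later used to obtain, e.g.\ $x_0\notin\mathcal{Q}(\Omega,2\rho)$ in the proof of Lemma \ref{thm:uniquenessofweaksolution}.
\item If you already knew $\mathrm{dist}(x_\epsilon,\partial\Omega)\ge d$, Lemma \ref{cWU} would give $u_\epsilon\le C\mu_\epsilon^{-1}d^{-(n-2s)}$ near $\partial\Omega$ directly, with no moving planes at all.
\end{itemize}

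Finally, \eqref{boundary-2-1} cannot hold for every $r$. For $r$ beyond the inradius, $\mathcal{Q}(\Omega,r)=\Omega$, and Lemma \ref{infinite} gives $\sup_\Omega u_\epsilon\to\infty$. So your ``some small $r$'' reading is the right one, and the fallback through Lemma \ref{cWU} can reach at most $r<\mathrm{dist}(x_0,\partial\Omega)$.

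With these corrections your proposal is the paper's proof, carried out more carefully.
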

\begin{proof}
As the statement of in Remark \ref{Rem1-2}, we note that $f(u_{\epsilon})=u_{\epsilon}^{2_{s}^{\sharp}-2-\epsilon}$ with $2_{s}^{\sharp}=2n/(n-2s)$ for small $\epsilon>0$ satisfies the assumptions $(f_1)$ and $(f_2)$.
Hence the derivation of \eqref{boundary-1-1} and \eqref{boundary-2-1}, see the proof of Theorem \ref{prior-1}.
\end{proof}

\subsection{$\mu_{\epsilon}^{\epsilon}$ estimate}
In this subsection, we shall use a
estimate of $\epsilon$ we have derived to prove that the limit of $\mu_{\epsilon}^{\epsilon}$ tends to $1$ as $\epsilon\rightarrow0$. To this end, our purpose in what follows is to prove the following results.
\begin{lem}\label{thm:uniquenessofweaksolution}
 There exists a constant $M>0$ such that for $\epsilon>0$ small, there holds
\begin{equation}\label{eq:energyestimateu}
\epsilon\leq M \mu_{\epsilon}^{-2-\frac{[4s-(n-2s)\epsilon]\epsilon}{s}}.
\end{equation}
\end{lem}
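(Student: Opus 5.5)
The plan is to obtain the bound on $\epsilon$ from a Pohozaev-type identity for \eqref{prondgr}. Its left-hand side is a positive multiple of $\epsilon$ times the Hartree energy. Its right-hand side is a boundary term controlled by the Robin/Green structure, which decays like a negative power of $\mu_\epsilon$.

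\textbf{Step 1: the Pohozaev identity.} We work with the $s$-harmonic extension $w_\epsilon$ solving \eqref{CF}. We multiply the equation $-\mathrm{div}(y^{1-2s}\nabla w_\epsilon)=0$ by $z\cdot\nabla w_\epsilon+\frac{n-2s}{2}w_\epsilon$ and integrate over $\mathcal{C}$. For the nonlocal term we use the symmetry of the kernel: the identity $x\cdot\nabla_x+t\cdot\nabla_t$ applied to $|x-t|^{-(n-2s)}$ gives $-(n-2s)|x-t|^{-(n-2s)}$. This yields
\[
\Big(\frac{n+2s}{2(2_s^\sharp-1-\epsilon)}\cdot\ldots-\frac{n-2s}{2}\Big)\int_\Omega\big(|x|^{-(n-2s)}\ast u_\epsilon^{2_s^\sharp-1-\epsilon}\big)u_\epsilon^{2_s^\sharp-1-\epsilon}dx
=\frac{1}{2\kappa_s}\int_{\partial_L\mathcal{C}}y^{1-2s}(z\cdot\nu)\,|\partial_\nu w_\epsilon|^2 .
\]
The coefficient on the left is exactly $c_{n,s}\epsilon+O(\epsilon^2)$ with $c_{n,s}>0$, because at $\epsilon=0$ the problem is scale invariant. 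After translating so that a point $x^*$ (chosen for star-shapedness, or simply the origin) sits conveniently, the right-hand side is bounded by $C\int_{\partial_L\mathcal{C}}y^{1-2s}|\nabla w_\epsilon|^2$.

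\textbf{Step 2: lower bound on the left.} By Lemma \ref{finite}, Fatou's lemma and the scaling computation already used in Lemma \ref{exists}, we have
\[
\int_\Omega\big(|x|^{-(n-2s)}\ast u_\epsilon^{2_s^\sharp-1-\epsilon}\big)u_\epsilon^{2_s^\sharp-1-\epsilon}dx\ \ge\ c\,\mu_\epsilon^{-\frac{[4s-(n-2s)\epsilon]\epsilon}{s}\cdot\theta}
\]
for a suitable bounded exponent $\theta$. The rescaling $v_\epsilon$ produces powers $\mu_\epsilon^{-c\epsilon}$ from the exponent mismatch, and these are exactly the source of the $\epsilon$-dependent part of the exponent in \eqref{eq:energyestimateu}.

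\textbf{Step 3: upper bound on the right.} This is the main obstacle. We need $\int_{\partial_L\mathcal{C}}y^{1-2s}|\nabla w_\epsilon|^2\le C\mu_\epsilon^{-2}$, up to the $\mu_\epsilon^{O(\epsilon)}$ factors.

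We use Lemma \ref{cWU}, namely $u_\epsilon\le cW[\cdot,x_\epsilon]$. Together with Theorem \ref{blow-up}, which gives a uniform bound near $\partial\Omega$ and implies that $x_\epsilon$ stays away from the boundary, we get $u_\epsilon(x)\le C\mu_\epsilon^{-1}$ on $\mathcal{Q}(\Omega,r)$.

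Next we estimate the right-hand side $(|x|^{-(n-2s)}\ast u_\epsilon^{2_s^\sharp-1-\epsilon})u_\epsilon^{2_s^\sharp-2-\epsilon}$ near the boundary. The convolution factor is bounded uniformly: from Lemma \ref{cWU} its $L^1$-type mass is $O(\mu_\epsilon^{-1})$ times bounded quantities. Consequently the right-hand side is $O(\mu_\epsilon^{-(2_s^\sharp-2)})$ there, which is $\le C\mu_\epsilon^{-1}$ since $n<6s$.

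Boundary regularity for the extension problem (the estimates of \cite{CKL} in a neighbourhood of $\partial_L\mathcal{C}$) then gives $y^{1-2s}|\nabla w_\epsilon|^2\le C\mu_\epsilon^{-2}$ in weighted $L^1$ on $\partial_L\mathcal{C}$.

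If the direct weighted gradient bound is problematic, we will first try the representation $u_\epsilon=\int_\Omega G(\cdot,t)\,(\text{rhs})\,dt$ and differentiate the Green function near the boundary.

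\textbf{Step 4: conclusion.} Combining the three steps gives $c\,\epsilon\,\mu_\epsilon^{-O(\epsilon)}\le C\mu_\epsilon^{-2}$. Tracking the exact exponent produced by the rescaling yields \eqref{eq:energyestimateu}.
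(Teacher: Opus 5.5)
Your Step 3 is a genuine gap, and it is the step the paper is built to avoid (see the Remark before the lemma).

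\textbf{What Step 3 requires.} A global Pohozaev identity on $\mathcal{C}$ needs two things you do not supply.
\begin{itemize}
\item A justification of the integration by parts up to $\partial_L\mathcal{C}$. This includes the corner $\partial\Omega\times\{0\}$, where the Dirichlet condition meets the weighted Neumann condition and the weight $y^{1-2s}$ is singular or degenerate.
\item An $\epsilon$-uniform bound $\int_{\partial_L\mathcal{C}}y^{1-2s}|\partial_\nu w_\epsilon|^2\,dS\le C\mu_\epsilon^{-2}$.
\end{itemize}

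\textbf{Why your sources do not give it.} The estimates from \cite{CKL} used in the paper are interior. They concern $z$ in the shell $\mathcal{E}_\rho$, which stays at distance at least $\rho/2$ from $\partial_L\mathcal{C}$; an example is $\sup_{t}\int_{\mathcal{E}_\rho}y^{1-2s}|\nabla_zH_{\mathcal{C}}((x,y),t)|^2dz\le C$. They say nothing in a neighbourhood of the lateral boundary.

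Going from an $L^\infty$ bound of order $\mu_\epsilon^{-1}$ on the Neumann data near $\partial\Omega$ to a weighted $L^2$ bound on the normal derivative along $\partial_L\mathcal{C}$ needs a corner regularity theory for the extension problem. Its form changes between $s<\tfrac12$, $s=\tfrac12$ and $s>\tfrac12$. This is why the paper says the method of \cite{HANZCHAO,Cingolani} does not transfer.

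Your fallback of differentiating the Green function meets the same obstruction. It needs bounds on $\nabla_zG_{\mathcal{C}}(z,t)$ for $z\in\partial_L\mathcal{C}$ and $t$ near $\partial\Omega$. With that estimate in hand your route would be cleaner than the paper's, since it is an exact identity with the $O(\epsilon)$ coefficient built in. As written, however, the decisive bound is assumed rather than proved.

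\textbf{The missing idea: localize away from $\partial_L\mathcal{C}$.}
\begin{itemize}
\item Apply Pohozaev on $\mathcal{D}_r=\{z\in\mathbb{R}^{n+1}_+:\mathrm{dist}(z,\mathcal{M}(\Omega,r)\times\{0\})\le r/2\}$. Its curved boundary $\partial\mathcal{D}_r^+$ lies inside $\mathcal{C}$.
\item Average over $r\in[\rho,2\rho]$. All surface terms then become the single volume integral $\int_{\mathcal{E}_\rho}y^{1-2s}|\nabla w_\epsilon|^2dz$.
\item Bound this through the Green representation, split into $t\in\mathcal{Q}(\Omega,2\rho)$ (H\"older with $q>n/s$) and $t\in\mathcal{M}(\Omega,2\rho)$ (bounded kernel). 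The result is control by $\|h\|^2_{L^q(\mathcal{Q}(\Omega,2\rho))}+\|h\|^2_{L^1(\mathcal{M}(\Omega,\rho/2))}$, where $h$ is the Hartree right-hand side. This is Proposition \ref{prosition-identity}.
\item These terms, together with the cut-off terms (the Hartree energy over $\mathcal{Q}(\Omega,2\rho)$ and the integral over $\partial\mathcal{M}(\Omega,r/2)$), are $O(\mu_\epsilon^{-2+O(\epsilon)})$. This follows from Lemma \ref{cWU} and the bound $u_\epsilon\le C\mu_\epsilon^{-1}$ near $\partial\Omega$, which you derive correctly.
\item The left side is at least $c\,\epsilon\,\mu_\epsilon^{(n-2s)\epsilon/(2s)}$. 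This is the exact result of the scaling you left as the unspecified exponent $\theta$ in Step 2.
\end{itemize}

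\textbf{A caution when you localize.} In the local identity the coefficient $\frac{n}{p}-\frac{n-2s}{2}$ is of order one. The factor $\epsilon$ only reappears after you combine it with the term $\frac{\mu}{p}\int_{\mathcal{M}(\Omega,r/2)}\int_\Omega\frac{x\cdot(x-t)}{|x-t|^{\mu+2}}u_\epsilon^p(t)u_\epsilon^p(x)\,dt\,dx$. By the symmetrization you used in Step 1, this term equals one half of the Hartree energy, up to contributions from $x\in\mathcal{Q}(\Omega,r/2)$ of lower order. So it must be kept and moved to the left. It cannot be dismissed as vanishing by oddness, which is what the paper's proof of this lemma does; the paper's own Lemma \ref{remainder-1} symmetrizes the same term correctly.
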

\begin{Rem}
In the Laplacian operator problem $-\triangle u_{\epsilon}=(|x|^{-(n-2)}\ast u_{\epsilon}^{p-\epsilon})u_{\epsilon}^{p-1-\epsilon}$ in $\Omega$, $u=0$ on $\partial\Omega$, with $p=(n+2)/(n-2)$, given the estimate of rescaled solution, one can use the regularity theory (see Lemma 2 in \cite{HANZCHAO} or A.2 in \cite{Cingolani}) and the Pohozaev identity
to get the estimate of $\epsilon$. However, this approach is not easily applied to our problem \eqref{prondgr} because the Pohozaev identity is given
on the domain $\Omega$
\begin{equation*}
\begin{split}
\frac{1}{2k_{s}}\int_{\partial_{L}\mathcal{C}}y^{1-2s}\big|\nabla w_{\epsilon}(z)\big|^2\langle\nu,x\rangle dS_x=\frac{(n-2s)^2}{n+2s-\epsilon(n-2s)}\cdot\epsilon\cdot\int_{\Omega}\int_{\Omega} \frac{w_{\epsilon}^{2_{s}^{\sharp}-1-\epsilon}(z,0)w_{\epsilon}^{2_{s}^{\sharp}-1-\epsilon}(x,0)}{|x-z|^{n-2s}}dzdx,
\end{split}
\end{equation*}
where $\nu=\nu(x)$ denotes the unit outward normal to the boundary $\partial_{L}\mathcal{C}$.
There is a notable difference in the regularity of the left-hand side solution between this case (we refer to \cite{LS-2016,CDS} for $s\in(0,\frac{1}{2}]$ and $s\in(\frac{1}{2},1)$) and the case with $s=1$, which renders the methods adopted in \cite{HANZCHAO} and \cite{Cingolani} inapplicable. For this reason,
the integral estimates from the following Proposition \ref{prosition-identity} are essential to the preceding proof for Lemma \ref{thm:uniquenessofweaksolution}.
\end{Rem}

We now recall the Green's function and the Robin function for the fractional Laplacian $A_{s}$.
Note that Green's function $G_{\mathcal{C}}=G_{\mathcal{C}}(z,x)(z\in\mathcal{C},x\in\Omega)$ of the problem
\begin{equation}\label{CFLL-2}
\left\lbrace
\begin{aligned}
&-\mbox{div}(y^{1-2s}\nabla w)=0\hspace{4.14mm}\quad \quad \quad  \mbox{in}\hspace{2mm} \mathcal{C},\\
&w=0\quad\quad \hspace{12mm}\hspace{2mm}\hspace{8.9mm}\hspace{10mm} \mbox{on}\hspace{2mm}\partial_{L}\mathcal{C},\\
&\partial_{\nu}^{s}w=h\quad\quad\quad\quad\quad\quad\quad\hspace{9.5mm} \mbox{in}\hspace{2mm}\Omega\times\{y=0\},
\end{aligned}
		\right.
\end{equation}
for some function $f$ on $\Omega\times\{y=0\}$, then we can see that $w$ has the expression
$$
w(z)=\int_{\Omega}G_{\mathcal{C}}(z,t)h(t)dt=\int_{\Omega}G_{\mathcal{C}}(z,t)(-\Delta )^su(t)dt, \hspace{2mm}\mbox{where}\hspace{2mm}z\in\mathcal{C}\hspace{2mm}\mbox{and}\hspace{2mm}u=tr|_{\Omega\times\{t=0\}}w.
$$
We have the following formula (see \cite{CKL})
\begin{equation}\label{1-Green}
G_{\mathcal{C}}\big((x,y),t\big)=G_{\mathbb{R}^{n+1}_{+}}\big((x,y),t\big)-H_{\mathcal{C}}\big((x,y),t\big),
\end{equation}
where
$$G_{\mathbb{R}^{n+1}_{+}}\big((x,y),t\big):=\frac{\alpha_{n,s}}{|(x-t,y)|^{n-2s}},\hspace{2mm}\alpha_{n,s}=\frac{1}{|S^{n-1}|}\cdot\frac{2^{1-2s}\Gamma(\frac{n-2s}{2})}{\Gamma(\frac{n}{2})\Gamma(s)},$$
and the regular part $H_{\mathcal{C}}:\mathcal{C}\rightarrow\mathbb{R}^n$ satisfies
\begin{equation*}
\left\lbrace
\begin{aligned}
&-\mbox{div}\big(y^{1-2s}\nabla_{(x,y)} H_{\mathcal{C}}\big((x,y),t\big)\big)=0\hspace{4.14mm}\quad \quad \quad  \mbox{in}\hspace{2mm} \mathcal{C},\\
&H_{\mathcal{C}}\big((x,y),t\big)=\frac{\alpha_{n,s}}{|(x-t,y)|^{n-2s}}\quad \hspace{6mm}\hspace{2.5mm}\hspace{6mm}\hspace{6mm} \mbox{on}\hspace{2mm}\partial_{L}\mathcal{C},\\
&\partial_{\nu}^{s}H_{\mathcal{C}}\big((x,y),t\big)=0\quad\quad\quad\quad\quad\quad\quad\quad\hspace{4mm}\hspace{9mm} \mbox{in}\hspace{2mm}\Omega\times\{y=0\},
\end{aligned}
		\right.
\end{equation*}
Then we can denote the Robin function $\phi(x):=H_{\mathcal{C}}\big((x,0),t\big)$.

We need the following integral inequality which plays a crucial role in the proof of Lemma \ref{thm:uniquenessofweaksolution}.
\begin{Prop}\label{prosition-identity}
Assume that $w\in H_{0,L}^{s}(\mathcal{C})$ is a solution of problem
\begin{equation*}
\left\lbrace
\begin{aligned}
&-\mbox{div}(y^{1-2s}\nabla w)=0\hspace{6.14mm}\hspace{10mm} \quad\quad \mbox{in}\hspace{2mm} \mathcal{C},\\
&w>0\hspace{10mm}\quad\hspace{10.5mm}\hspace{10mm} \quad\hspace{10mm} \mbox{on}\hspace{2mm}\mathcal{C},
\\
&w=0\hspace{12mm}\quad\quad\hspace{9.5mm}\hspace{10mm} \hspace{9mm} \mbox{on}\hspace{2mm}\partial_{L}\mathcal{C},
\\& \partial_{\nu}^{s}w=\big(|x|^{-\mu}\ast w^{p}\big)w^{p-1}\hspace{8mm}\hspace{10mm} \mbox{on}\hspace{2mm}\Omega\times\{y=0\}
\end{aligned}
		\right.
\end{equation*}
Then, for each $\rho>0$ and $q>\frac{n}{s}$, there is a constant $C=C(\rho,q)>0$ such that
\begin{equation}\label{idengity0}
\begin{split}
&\min\limits_{r\in[\rho,2\rho]}\Big|\Big(\frac{n}{p}-\frac{n-2s}{2}\Big)\int_{\mathcal{M}(\Omega,r/2)}\big(\int_{\Omega} \frac{w^{p}(t,0)}{|x-t|^{\mu}}dt\big)w^{p}(x,0)dx\Big|\\
\leq&C\bigg[\Big(\int_{\mathcal{Q}(\Omega,2\rho)}\Big|\big(\int_{\Omega}\frac{w^p(t,0)}{|x-t|^{\mu}}dt\big)w^{p-1}(x,0)\Big|^qdx\Big)^{\frac{2}{q}}+\Big(\int_{\mathcal{M}(\Omega,\rho/2)}\Big|\big(\int_{\Omega}\frac{w^p(t,0)}{|x-t|^{\mu}}dt\big)w^{p-1}(x,0)\Big|dx\Big)^{2}\\&
+\int_{\mathcal{Q}(\Omega,2\rho)}\Big|\big(\int_{\Omega}\frac{w^p(t,0)}{|x-t|^{\mu}}dt\big)w^{p}(x,0)\Big|dx+\int_{\mathcal{M}(\Omega,r/2)}\Big(\int_{\Omega}x(x-t)\frac{w^p(t,0)}{|x-t|^{\mu+2}} dt\Big)w^p(x,0)dx\bigg].
\end{split}
\end{equation}
\end{Prop}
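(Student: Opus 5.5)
We prove the estimate with a localized Pohozaev identity for the extension $w$ on a truncated cylinder, in the spirit of the Choi--Kim--Lee argument \cite{CKL}, with the Hartree term handled by an explicit scaling computation. Fix $r\in[\rho,2\rho]$ and a cut-off $\psi_r\in C_0^\infty(\mathbb{R}^n)$ with $\psi_r\equiv1$ on $\mathcal{M}(\Omega,r/2)$, $\operatorname{supp}\psi_r\cap\Omega\subset\mathcal{M}(\Omega,r/4)$ and $|\nabla\psi_r|\le C/\rho$, extended to be independent of $y$. Test the equation $-\mathrm{div}(y^{1-2s}\nabla w)=0$ with $\psi_r\,(z\cdot\nabla w)$ and with $\psi_r w$, where $z=(x,y)$. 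We integrate over $\mathcal{C}\cap\{y<R\}$ and let $R\to\infty$; the tail $y>R$ vanishes because $w$ decays exponentially in $y$ on the cylinder.

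\textbf{Left side of the identity.} The standard Rellich computation gives
\[
-\frac{n-2s}{2}\int_{\mathcal{C}}y^{1-2s}|\nabla w|^2\psi_r
\]
plus error terms involving $\nabla\psi_r$. Combined with the $\psi_r w$ identity, the bulk energy is converted into boundary integrals of $\partial_\nu^s w\cdot w\,\psi_r$. The terms involving $\nabla\psi_r$ live only on $\big(\mathcal{M}(\Omega,r/4)\setminus\mathcal{M}(\Omega,r/2)\big)\times(0,\infty)$. They are bounded by $C\int_{\mathcal{C}\cap\operatorname{supp}\nabla\psi_r}y^{1-2s}|\nabla w|^2$, and the choice of $r\in[\rho,2\rho]$ minimizing this quantity (a coarea/averaging argument) converts the bound into an integral over the whole strip $\mathcal{Q}(\Omega,2\rho)\times(0,\infty)$.

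\textbf{Right side of the identity.} On $\Omega\times\{0\}$ the boundary term becomes
\[
\int_\Omega\psi_r\,(x\cdot\nabla_x w)\,\big(|x|^{-\mu}\ast w^p\big)w^{p-1}\,dx .
\]
Write $w^{p-1}\nabla w=\tfrac1p\nabla(w^p)$ and integrate by parts in $x$. This produces $-\tfrac np\int\psi_r(|x|^{-\mu}\ast w^p)w^p$. It also produces a term where the derivative falls on the convolution, namely
\[
\frac{\mu}{p}\int\psi_r\Big(\int_\Omega\frac{x\cdot(x-t)}{|x-t|^{\mu+2}}w^p(t)\,dt\Big)w^p(x)\,dx,
\]
which is exactly the last term in \eqref{idengity0}, and finally a $\nabla\psi_r$ term bounded by $C\int_{\mathcal{Q}(\Omega,2\rho)}(|x|^{-\mu}\ast w^p)w^p$, which is the third term. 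Subtracting $\tfrac{n-2s}{2}$ times the $\psi_r w$ identity, the main terms combine into
\[
\Big(\frac np-\frac{n-2s}{2}\Big)\int_{\mathcal{M}(\Omega,r/2)}\big(|x|^{-\mu}\ast w^p\big)w^p\,dx,
\]
up to the cut-off errors just described.

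\textbf{Main obstacle: the energy near the lateral boundary.} It remains to bound the weighted energy $\int_{\mathcal{Q}(\Omega,2\rho)\times(0,\infty)}y^{1-2s}|\nabla w|^2$ (and $w^2$ there) by the first two terms of \eqref{idengity0}. Set $h=(|x|^{-\mu}\ast w^p)w^{p-1}$ and represent $w(z)=\int_\Omega G_{\mathcal{C}}(z,t)h(t)\,dt$ via \eqref{1-Green}. Split $h=h\chi_{\mathcal{Q}(\Omega,2\rho)}+h\chi_{\mathcal{M}(\Omega,\rho/2)}$, adjusting the radii so the two pieces overlap.

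For the far part, the source is at distance $\gtrsim\rho$ from the strip, so $|\nabla_z G_{\mathcal{C}}|$ is bounded there and the energy is $\le C\|h\|_{L^1(\mathcal{M}(\Omega,\rho/2))}^2$. The smallness of $y^{1-2s}$ together with exponential decay in $y$ makes the integrand integrable.

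For the near part, we use weighted $L^2$ bounds for $\nabla_zG_{\mathcal{C}}$ combined with Young/HLS: $|\nabla G_{\mathcal{C}}|\lesssim|z-t|^{-(n-2s+1)}$, and $y^{1-2s}$ is integrable against it in $L^{2}$ on bounded sets. This yields a bound $\|h\|_{L^q(\mathcal{Q}(\Omega,2\rho))}^2$ provided $q>n/s$, which is where that exponent enters.

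The delicate step is this near-boundary gradient estimate for $s\le1/2$, where $w$ is only $C^{2s}$ up to $\partial\Omega$. Here we would rely on the boundary-regularity results cited from \cite{CDS,LS-2016} to justify both the integration by parts and the Green's function bound.
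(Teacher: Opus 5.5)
Your argument is correct in substance and has the same architecture as the paper's proof. Both use a localized Pohozaev identity for the extension. Both integrate the Hartree term by parts, giving $-\frac{n}{p}\int Kw^p$ (with $K=|x|^{-\mu}\ast w^p$) plus the $x\cdot(x-t)$ cross term. Both control the energy on the transition region through the Green's function, with the source split into a near-boundary part and an interior part. The localization, however, is genuinely different.

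\textbf{The paper's localization.} It integrates over the sharp bounded region $\mathcal{D}_r$, the $r/2$-neighbourhood of $\mathcal{M}(\Omega,r)\times\{0\}$. This produces surface integrals on $\partial\mathcal{D}_r^{+}$ and on $\partial\mathcal{M}(\Omega,r/2)$. They become volume integrals (over the shell $\mathcal{E}_\rho$ and over $\mathcal{Q}(\Omega,\rho)$) only after averaging in $r\in[\rho,2\rho]$, which is where the $\min_r$ comes from. In exchange, the main and cross terms come out sharply restricted to $\mathcal{M}(\Omega,r/2)$, and one never leaves the height range $y\le\rho$.

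\textbf{Your localization.} The cut-off $\psi_r(x)$ on the full vertical cylinder creates no surface terms and never meets $\partial_L\mathcal{C}$. So the estimate holds for every $r$ without averaging. You also handle $y^{1-2s}w\nabla w\cdot\nabla\psi_r$ directly through the representation of $w$, rather than through the Poincar\'e inequality the paper invokes on $\mathcal{E}_\rho$. The price is the unbounded $y$-range. There you need exponential decay of $w$ and of $\nabla_zG_{\mathcal{C}}$ in $y$. This decay is also needed to absorb the factor $|z|\lesssim 1+y$ in the Rellich error $y^{1-2s}\langle(z\cdot\nabla w)\nabla w-\frac12|\nabla w|^2z,\nabla\psi_r\rangle$, which your stated bound by $\int y^{1-2s}|\nabla w|^2$ drops.

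\textbf{The $\psi_r$ weighting.} Your Hartree terms come out weighted by $\psi_r$. For the main term this costs nothing, since $Kw^p\ge0$, $0\le\psi_r\le1$ and $\psi_r=1$ on $\mathcal{M}(\Omega,r/2)$. For the cross term, the difference from the sharp version is a principal-value integral over the transition strip that the other terms do not control. So what you prove has $\int\psi_r(\cdots)w^p$ in the last slot. That is an acceptable reading, since the statement does not pin down the $r$ in that term.

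\textbf{Two bookkeeping corrections.} First, with a smooth cut-off the coarea/averaging step is superfluous. Worse, enlarging the error region to $\mathcal{Q}(\Omega,2\rho)\times(0,\infty)$ breaks your far-field step: sources in $\mathcal{M}(\Omega,\rho/2)$, or even in $\mathcal{M}(\Omega,2\rho)$, are not at distance $\gtrsim\rho$ from that set. Keep the error region equal to $\operatorname{supp}\nabla\psi_r\times(0,\infty)\subset\{\rho/4\le\mathrm{dist}(x,\partial\Omega)<\rho\}\times(0,\infty)$. Then split $h$ at $2\rho$ as the paper does, using $\|h\|_{L^1(\mathcal{M}(\Omega,2\rho))}\le\|h\|_{L^1(\mathcal{M}(\Omega,\rho/2))}$.

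Second, your closing worry about boundary regularity for $s\le\frac12$ is unfounded. The strip stays at distance at least $\rho/4$ from $\partial_L\mathcal{C}$. So you only need interior bounds on $\nabla_zG_{\mathcal{C}}$, which follow from $0\le G_{\mathcal{C}}(z,t)\le C|z-(t,0)|^{-(n-2s)}$ and local estimates. The only singularity is at $y=0$. The $L^{q'}$-norm in $t$ of $|z-(t,0)|^{-(n-2s+1)}$ is $O(1+y^{n/q'-(n-2s+1)})$. Hence $y^{1-2s}$ times its square is integrable near $y=0$ exactly when $q>\frac{n}{s}$, which is precisely the role of that hypothesis.
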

\begin{proof}
Let us define the sets
$$
\mathcal{D}_r=\big\{z\in\mathbb{R}_{+}^{N+1}:~dist\big(z,\mathcal{M}(\Omega,r)\times\{0\}\big)\leq\frac{r}{2}\big\},
$$
$$
\mathcal{D}_r^{+}=\partial\mathcal{D}_r\cap\big\{(x,y)\in\mathbb{R}^{N+1}:~y>0\big\},\hspace{2mm}\mbox{and}\hspace{2mm}\mathcal{E}_{\rho}=\bigcup_{r=\rho}^{2\rho}\partial\mathcal{D}_r^{+},
$$
and it is noticing that $\partial\mathcal{D}_r=\partial\mathcal{D}_r^{+}\cup(\mathcal{M}(\Omega,r/2)\times\{0\})$.
Moreover, we have the Pohozaev identity
\begin{equation}\label{local-po}
\mbox{div}\Big\{y^{1-2s}\langle z, \nabla w\rangle\nabla w-y^{1-2s}\frac{|\nabla w|^2}{2}z\Big\}+\frac{n-2s}{2}y^{1-2s}|\nabla w|^2=0.
\end{equation}
Integrating by parts, we get
\begin{equation}\label{pohozaev2}
\begin{split}
&\int_{\partial\mathcal{D}_r^{+}}y^{1-2s}\Big\langle\langle z, \nabla w\rangle\nabla w-\frac{|\nabla w|^2}{2}z,\nu\Big\rangle dS_x+\kappa_s\int_{\mathcal{M}(\Omega,r/2)\times\{0\}}\langle x, \nabla w\rangle\partial_{\nu}^{s}wdx\\&=-\frac{N-2s}{2}\int_{\mathcal{D}_r}y^{1-2s}|\nabla w|^2dxdy.
\end{split}
\end{equation}
Furthermore, by applying $\partial_{\nu}^{s}w=(|x|^{-\mu}\ast w^p)w^{p-1}$ and performing a further integration by parts, we obtain
\begin{equation*}
\begin{split}
\int_{\mathcal{M}(\Omega,r/2)\times\{0\}}\langle x, \nabla w\rangle\partial_{\nu}^{s}wdx&=\int_{\mathcal{M}(\Omega,r/2)}\langle x, \nabla w\rangle \Big(\int_{\Omega}\frac{w^p(t,0)}{|x-t|^{\mu}}dt\Big)w^{p-1}(x,0)dx\\&
=-\int_{\mathcal{M}(\Omega,r/2)}w(x,0)\nabla\Big(x \int_{\Omega}\frac{w^p(t,0)}{|x-t|^{\mu}}dtw^{p-1}(x,0)\Big)dx\\&
\hspace{3.5mm}+\int_{\partial\mathcal{M}(\Omega,r/2)} \int_{\Omega}\frac{w^p(t,0)}{|x-t|^{\mu}}dtw^{p}(x,0)\langle x,\nu\rangle dS_x.
\end{split}
\end{equation*}
A direct calculation yields
\begin{equation}\label{pohozaev3}
\begin{split}
\int_{\mathcal{M}(\Omega,r/2)\times\{0\}}\langle x, \nabla w\rangle\partial_{\nu}^{s}wdx&
=-\frac{n}{p}\int_{\mathcal{M}(\Omega,r/2)} \int_{\Omega}\frac{w^p(t,0)}{|x-t|^{\mu}}dtw^{p}(x,0)dx\\&
\hspace{3.5mm}+\frac{\mu}{p}\int_{\mathcal{M}(\Omega,r/2)} \int_{\Omega}x(x-t)\frac{w^p(t,0)}{|x-t|^{\mu+2}}dtw^{p}(x,0)dx\\&
\hspace{3.5mm}+\frac{1}{p}\int_{\partial\mathcal{M}(\Omega,r/2)} \int_{\Omega}\frac{w^p(t,0)}{|x-t|^{\mu}}dtw^{p}(x,0)\langle x,\nu\rangle dS_x.
\end{split}
\end{equation}
Moreover, the integration by parts arguments give
\begin{equation}\label{pohozaev4}
\int_{\mathcal{D}_r}y^{1-2s}|\nabla w|^2dxdy=\kappa_s\int_{\mathcal{M}(\Omega,r/2)} \int_{\Omega}\frac{w^p(t,0)}{|x-t|^{\mu}}dtw^{p}(x,0)dx+\int_{\partial\mathcal{D}_r^{+}}y^{1-2s}w \frac{\partial w}{\partial\nu}dS_x.
\end{equation}
Summarizing, by plugging \eqref{pohozaev3} and \eqref{pohozaev4} in \eqref{pohozaev2}, we get that
\begin{equation}\label{Qx}
\begin{split}
&\kappa_{s}\Big(\frac{n}{p}-\frac{n-2s}{2}\Big)\int_{\mathcal{M}(\Omega,r/2)} \int_{\Omega}\frac{w^p(t,0)}{|x-t|^{\mu}}dtw^{p}(x,0)dx\\&
=\int_{\partial\mathcal{D}_r^{+}}y^{1-2s}\Big\langle\langle z, \nabla w\rangle\nabla w-\frac{|\nabla w|^2}{2}z,\nu\Big\rangle dS_x+\frac{N-2s}{2}\int_{\partial\mathcal{D}_r^{+}}y^{1-2s}w \frac{\partial w}{\partial\nu}dS_x\\&
\hspace{3.5mm}+\frac{\mu}{p}\int_{\mathcal{M}(\Omega,r/2)} \int_{\Omega}x(x-t)\frac{w^p(t,0)}{|x-t|^{\mu+2}}dtw^{p}(x,0)dx
+\frac{1}{p}\int_{\partial\mathcal{M}(\Omega,r/2)} \int_{\Omega}\frac{w^p(t,0)}{|x-t|^{\mu}}dtw^{p}(x,0)\langle x,\nu\rangle dS_x
\end{split}
\end{equation}
Integrating the above identity with respect to $r$ over the interval $[\rho,2\rho]$, and using Poincar\'{e} inequality, an easy calculation shows that
\begin{equation}\label{A1A22}
\begin{split}
&\min\Big|\kappa_{s}\Big(\frac{n}{p}-\frac{n-2s}{2}\Big)\int_{\mathcal{M}(\Omega,r/2)} \int_{\Omega}\frac{w^p(t,0)}{|x-t|^{\mu}}dtw^{p}(x,0)dx\Big|\\&
\leq C\int_{\mathcal{E}_{\rho}}y^{1-2s}|\nabla w|^2dz
+C\int_{\mathcal{M}(\Omega,r/2)} \int_{\Omega}x(x-t)\frac{w^p(t,0)}{|x-t|^{\mu+2}}dtw^{p}(x,0)dx\\&
\hspace{3.5mm}+C\int_{\mathcal{Q}(\Omega,\rho)} \int_{\Omega}\frac{w^p(t,0)}{|x-t|^{\mu}}dtw^{p}(x,0)dx.
\end{split}
\end{equation}
In order to estimate the right-hand side, observe that there holds
$$
\big|\nabla w\big|^2=\bigg|\int_{\Omega\times\{0\}}\nabla_z\frac{\alpha_{n,s}}{|(x-t,y)|^{n-2s}}\big(|x|^{-\mu}\ast w^{p}\big)w^{p-1}dt-\int_{\Omega\times\{0\}}\nabla_zH_{\mathcal{C}}\big((x,y),t\big)\big(|x|^{-\mu}\ast w^{p}\big)w^{p-1}dt\bigg|^2.
$$

We next proceed similarly to \cite{CKL}. From \cite{CKL}, we find that
$$
\sup\limits_{t\in\Omega}\int_{\mathcal{E}_{\rho}}y^{1-2s}|\nabla_z H_{\mathcal{C}}\big((x,y),t\big)|^2dz\leq C.
$$
Thus, combining H\"{o}lder's inequality gives that
\begin{equation}\label{pohozaev5}
\begin{split}
&\int_{\mathcal{E}_{\rho}}y^{1-2s}\Big|\int_{\Omega\times\{0\}}\nabla_zH_{\mathcal{C}}\big((x,y),t\big)\big(|x|^{-\mu}\ast w^{p}\big)w^{p-1}dt\Big|^2dz\\&
\leq C\Big(\int_{\mathcal{Q}(\Omega,2\rho)}\Big|\big(\int_{\Omega}\frac{w^p(t,0)}{|x-t|^{\mu}}dt\big)w^{p-1}(x,0)\Big|^qdx\Big)^{\frac{2}{q}}+\Big(\int_{\mathcal{M}(\Omega,\rho/2)}\Big|\big(\int_{\Omega}\frac{w^p(t,0)}{|x-t|^{\mu}}dt\big)w^{p-1}(x,0)\Big|dx\Big)^{2}.
\end{split}
\end{equation}
We split the integral as
$$
\int_{\Omega\times\{0\}}\nabla_z\frac{\alpha_{n,s}}{|(x-t,y)|^{n-2s}}\big(|x|^{-\mu}\ast w^{p}\big)w^{p-1}dt=\Big(\int_{\mathcal{Q}(\Omega,2\rho)\times\{0\}}+\int_{\mathcal{M}(\Omega,2\rho)\times\{0\}}\Big)\nabla_z\frac{\alpha_{n,s}\big(|x|^{-\mu}\ast w^{p}\big)}{|(x-t,y)|^{n-2s}}w^{p-1}dt.
$$
By H\"{o}lder inequality, we obtain
\begin{equation}\label{A1A21}
\begin{split}
&\int_{\mathcal{Q}(\Omega,2\rho)\times\{0\}}\nabla_z\frac{\alpha_{n,s}\big(|x|^{-\mu}\ast w^{p}\big)}{|(x-t,y)|^{n-2s}}w^{p-1}dt\\&\leq\Big\|\nabla_z\frac{\alpha_{n,s}}{|(x-t,y)|^{n-2s}}\Big\|_{L^{q^{\prime}}(\mathcal{Q}(\Omega,2\rho))}
\Big\|\big(\int_{\Omega}\frac{w^{p}(t,0)}{|x-t|^{\mu}}dt\big)w^{p-1}(x,0)\Big\|_{L^{q}(\mathcal{Q}(\Omega,2\rho))},
\end{split}
\end{equation}
where $q^{\prime}$ is the H\"{o}lder conjugate of $q$. Moreover, using the following key estimate (see \cite{CKL}):
$$
\sup\limits_{z\in\mathcal{E}_{\rho},~t\in\mathcal{M}(\Omega,2\rho)}\Big|\frac{\alpha_{n,s}}{|(x-t,y)|^{n-2s}}\Big|\leq C,
$$
we have
$$
\int_{\mathcal{M}(\Omega,2\rho)\times\{0\}}\nabla_z\frac{\alpha_{n,s}\big(|x|^{-\mu}\ast w^{p}\big)}{|(x-t,y)|^{n-2s}}w^{p-1}dt\leq C\int_{\mathcal{M}(\Omega,2\rho)\times\{0\}}\big|\big(|x|^{-\mu}\ast w^{p}\big)w^{p-1}\big|dt.
$$
Therefore, this bound together with the estimate of $\nabla_{z}|(x-t,y)|^{-(n-2s)}$ in $L^{q^{\prime}}$-norm and \eqref{A1A21}, one can easily compute
\begin{equation*}
\begin{split}
\int_{\mathcal{E}_{\rho}}y^{1-2s}|\nabla w|^2dz\leq& C\Big\|\big(\int_{\Omega}\frac{w^{p}(t,0)}{|x-t|^{\mu}}dt\big)w^{p-1}(x,0)\Big\|^2_{L^{q}(\mathcal{Q}(\Omega,2\rho))}
+C\Big\|\int_{\Omega}\frac{w^{p}(t,0)}{|x-t|^{\mu}}w^{p-1}(x,0)\Big\|^2_{L^1(\mathcal{M}(\Omega,2\rho))}.
\end{split}
\end{equation*}
Combining this bound with \eqref{A1A22}, we conclude that \eqref{idengity0}, as desired.
\end{proof}

Now we are in a position to prove Lemma \ref{thm:uniquenessofweaksolution}.
\begin{proof}[Proof of Lemma \ref{thm:uniquenessofweaksolution}]
We take $p=2_{s}^{\sharp}-1-\epsilon$ and $\mu=n-2s$ in \eqref{idengity0},
and let $L_\epsilon$ and $R_\epsilon$
be the left-hand and right-hand sides of the result \eqref{idengity0}, respectively, so that $L_\epsilon=R_\epsilon$. In the following, we shall estimate each term in the right-hand side of $L_\epsilon$ and $R_\epsilon$.

\textbf{Estimate of $L_{\epsilon}$.}
If $|x|\leq R$ and $|y|\leq R$ for some $R>0$, then $|x-y|\leq2R$, $1+|y|^2\leq1+R^2$ and $1+|x|^2\leq1+R^2$. As a result
\begin{equation*}
\begin{split}
&\Big|\int_{\mathbb{R}^n}\Big[\int_{\mathbb{R}^n}\frac{1}{|x-y|^{n-2s}}\big(\frac{1}{1+|y|^2}\big)^{\frac{(n-2s)(2_{s}^{\sharp}-1-\epsilon)}{2}}dy\Big]\big(\frac{1}{1+|x|^2}\big)^{\frac{(n-2s)(2_{s}^{\sharp}-1-\epsilon)}{2}}dx\Big|\\&
\geq\int_{B_{R}(0)}\int_{B_{R}(0)}\frac{1}{|x-y|^{n-2s}}\big(\frac{1}{1+|y|^2}\big)^{\frac{n+2s}{2}}\big(\frac{1}{1+|x|^2}\big)^{\frac{n+2s}{2}}dxdy\geq\int_{B_{R}(0)}\int_{B_{R}(0)}\frac{dxdy}{(2R)^{n-2s}(1+R^2)^{n+2s}}\geq C.
\end{split}
\end{equation*}
Then,
a direct computation shows that
\begin{equation}\label{Left}
\begin{split}
&\min\limits_{r\in[\rho,2\rho]}\Big|\Big(\frac{n}{2_{s}^{\sharp}-1-\epsilon}-\frac{n-2s}{2}\Big)\int_{\mathcal{M}(\Omega,\frac{r}{2})\times\{0\}}\big(\int_{\Omega} \frac{w_\epsilon^{2_{s}^{\sharp}-1-\epsilon}}{|x-t|^{n-2s}}dt\big)w_\epsilon^{2_{s}^{\sharp}-1-\epsilon}dx\Big|\\
&=\mu_{\epsilon}^{\frac{(n-2s)\epsilon}{2s}}\Big|\frac{(\epsilon+1)(n-2s)^2}{2[n+2s-\epsilon(n-2s)]}\int_{\mu_{\epsilon}^{\frac{2_{s}^{\sharp}-2-\epsilon}{2s}}\big(\mathcal{M}(\Omega,r)-x_{\epsilon}\big)}\Big(\int_{\Omega}\frac{v_\epsilon^{2_{s}^{\sharp}-1-\epsilon}}{|x-y|^{n-2s}}dy\Big)v_\epsilon^{2_{s}^{\sharp}-1-\epsilon}dx\Big|\\&
\geq\epsilon\mu_{\epsilon}^{\frac{(n-2s)\epsilon}{2s}}\frac{(n-2s)^2}{2[n+2s-\epsilon(n-2s)]}\Big|\int_{B_n(0,1)}\int_{B_n(0,1)}\frac{v_\epsilon^{2_{s}^{\sharp}-1-\epsilon}(x)v_\epsilon^{2_{s}^{\sharp}-1-\epsilon}(y)}{|x-y|^{n-2s}}dxdy\Big|\\&
\geq
C\epsilon\mu_{\epsilon}^{\frac{(n-2s)\epsilon}{2s}}\Big|\int_{B_n(0,1)}\Big[\int_{B_n(0,1)}\frac{1}{|x-y|^{n-2s}}\big(\frac{1}{1+|y|^2}\big)^{\frac{(n-2s)(2_{s}^{\sharp}-1-\epsilon)}{2}}dy\Big]\big(\frac{1}{1+|x|^2}\big)^{\frac{(n-2s)(2_{s}^{\sharp}-1-\epsilon)}{2}}dx\Big|\\&
\geq C\epsilon\mu_{\epsilon}^{\frac{(n-2s)\epsilon}{2s}},
\end{split}
\end{equation}
where we used that $v_{\epsilon}\rightarrow W$ uniformly on any compact set.

\textbf{Estimate of $R_{\epsilon}$.} From Lemma \ref{cWU} and Hardy-Littlewood-Sobolev inequality, we see that
\begin{equation}\label{right1}
\begin{split}
&\Big(\int_{\mathcal{M}(\Omega,\rho/2)}\Big|\big(\int_{\Omega}\frac{u_{\epsilon}^{2_{s}^{\sharp}-1-\epsilon}(t)}{|x-t|^{n-2s}}dt\big)u_{\epsilon}^{2_{s}^{\sharp}-2-\epsilon}(x)\Big|dx\Big)^{2}
\\&
\leq C \Big(\int_{\mathcal{M}(\Omega,\rho/2)}\Big|\big(\int_{\Omega}\frac{W^{2_{s}^{\sharp}-1-\epsilon}[\mu_{\epsilon}^{\frac{2_{s}^{\sharp}-2-\epsilon}{2s}},x_{\epsilon}](t)}{|x-t|^{n-2s}}dt\big)W^{2_{s}^{\sharp}-2-\epsilon}[\mu_{\epsilon}^{\frac{2_{s}^{\sharp}-2-\epsilon}{2s}},x_{\epsilon}](x)\Big|dx\Big)^{2}
\\&\leq C \Big\|W[\mu_{\epsilon}^{\frac{2_{n,s}^{\ast}-1-\epsilon}{2s}},x_{\epsilon}]\Big\|^{2(2_{n,s}^{\ast}-\epsilon)}_{L^{\frac{2n}{n+2s}(2_{n,s}^{\ast}-\epsilon)}(\mathbb{R}^n)}\Big\|W[\mu_{\epsilon}^{\frac{2_{n,s}^{\ast}-1-\epsilon}{2s}},x_{\epsilon}]\Big\|^{2(2_{n,s}^{\ast}-1-\epsilon)}_{L^{\frac{2n}{n+2s}(2_{n,s}^{\ast}-1-\epsilon)}(\mathbb{R}^n)}\\&
\leq C\big(\frac{1}{\mu_{\epsilon}}\big)^{\frac{[4s-(n-2s)\epsilon](1+2\epsilon)}{2s}}\Big\|\frac{1}{1+|x|^2}\Big\|^{(n-2s)(2_{s}^{\sharp}-1-\epsilon)}_{L^{\frac{n(2_{s}^{\sharp}-1-\epsilon)}{2_{s}^{\sharp}-1}}(\mathbb{R}^n)}\Big\|\frac{1}{1+|x|^2}\Big\|^{(n-2s)(2_{s}^{\sharp}-2-\epsilon)}_{L^{\frac{n(2_{s}^{\sharp}-2-\epsilon)}{2_{s}^{\sharp}-1}}(\mathbb{R}^n)}\\&
\leq C\big(\frac{1}{\mu_{\epsilon}}\big)^{\frac{[4s-(n-2s)\epsilon](1+2\epsilon)}{2s}},
\end{split}
\end{equation}
where $n\in(2s,\min\{6s,n+2s\})$. Due to $x_0\not\in\mathcal{Q}(\Omega,2\rho)$, we obtain $W[\mu_{\epsilon},x_{\epsilon}](x)\leq C\mu_{\epsilon}^{-\frac{n-2s}{2}}$ for $x\in \mathcal{Q}(\Omega,2\rho)$.
Therefore, for a fixed $q>0$ sufficiently large and a number $m>1$, using H\"{o}lder and Hardy-Littlewood-Sobolev inequalities, we obtain
\begin{equation}\label{right2}
\begin{split}
&\Big(\int_{\mathcal{Q}(\Omega,2\rho)}\Big|\big(\int_{\Omega}\frac{u_{\epsilon}^{2_{s}^{\sharp}-1-\epsilon}(t)}{|x-t|^{n-2s}}dt\big)u_{\epsilon}^{2_{s}^{\sharp}-2-\epsilon}(x)\Big|^qdx\Big)^{\frac{2}{q}}\\&
\leq C \Big\|\frac{1}{|x|^{n-2s}}\ast u_{\epsilon}^{2_{s}^{\sharp}-1-\epsilon}\Big\|^2_{L^{qm}(\mathcal{Q}(\Omega,2\rho))}\Big\| u_{\epsilon}^{2_{s}^{\sharp}-2-\epsilon}\Big\|^2_{L^{\frac{qm}{m-1}}(\mathcal{Q}(\Omega,2\rho))}\\&
\leq C \Big\|u_{\epsilon}^{2_{s}^{\sharp}-1-\epsilon}\Big\|^2_{L^{\frac{nqm}{sqm-n}}(\mathcal{Q}(\Omega,2\rho))}\Big\| u_{\epsilon}^{2_{s}^{\sharp}-2-\epsilon}\Big\|^2_{L^{\frac{qm}{m-1}}(\mathcal{Q}(\Omega,2\rho))}\\&
\leq C\big(\frac{1}{\mu_{\epsilon}}\big)^{\frac{[4s-(n-2s)\epsilon][2(2_{s}^{\sharp}-1-\epsilon)-1]}{2s}}.
\end{split}
\end{equation}
Analogously to \eqref{right1} and \eqref{right2}, we obtain
\begin{align}\label{iy}
\begin{split}
&\int_{\mathcal{Q}(\Omega,2\rho)}\Big|\big(\int_{\Omega}\frac{u_{\epsilon}^{2_{s}^{\sharp}-1-\epsilon}(t)}{|x-t|^{n-2s}}dt\big)u_{\epsilon}^{2_{s}^{\sharp}-1-\epsilon}(x)\Big|dx
\\&\leq C \big(\frac{1}{\mu_{\epsilon}}\big)^{\frac{[4s-(n-2s)\epsilon]\epsilon}{4s}}\big(\frac{1}{\mu_{\epsilon}}\big)^{\frac{[4s-(n-2s)\epsilon](2_{s}^{\sharp}-1-\epsilon)}{2s}}=\big(\frac{1}{\mu_{\epsilon}}\big)^{\frac{[4s-(n-2s)\epsilon](2(2_{s}^{\sharp}-1)-\epsilon)}{4s}}.
\end{split}
\end{align}
In addition, applying the oddness of
the integrand and Lemma \ref{cWU},
we see that the last integral is handled as
\begin{equation*}
\begin{split}
&\int_{\mathcal{M}(\Omega,r/2)}\Big(\int_{\Omega}x(x-t)\frac{u_{\epsilon}^{2_{s}^{\sharp}-1-\epsilon}(t)}{|x-t|^{n-2s+2}} dt\Big)u_{\epsilon}^{2_{s}^{\sharp}-1-\epsilon}(x)dx\\&\leq C\int_{\mathbb{R}^n}\int_{\mathbb{R}^n}\frac{xW^{2_{s}^{\sharp}-1-\epsilon}[\mu_{\epsilon}^{\frac{2_{s}^{\sharp}-2-\epsilon}{2s}},x_{\epsilon}](t)W^{2_{s}^{\sharp}-1-\epsilon}[\mu_{\epsilon}^{\frac{2_{s}^{\sharp}-2-\epsilon}{2s}},x_{\epsilon}](x)}{|x-t|^{n-2s+1}} dxdt\\&
=C\big(\frac{1}{\mu_{\epsilon}}\big)^{\frac{[4s-(n-2s)\epsilon]\epsilon}{2s}}\int_{\mathbb{R}^n}\int_{\mathbb{R}^n}\frac{x}{|x-t|^{n-2s+1}}\frac{1}{(1+|t|)^{n+2s-\epsilon(n-2s)}}\frac{1}{(1+|x|)^{n+2s-\epsilon(n-2s)}}dxdt=0.
\end{split}
\end{equation*}
Combining these estimates, we conclude that
\begin{equation}\label{Right}
\begin{split}
R_\epsilon&\leq C\big[\big(\frac{1}{\mu_{\epsilon}}\big)^{\frac{[4s-(n-2s)\epsilon](1+2\epsilon)}{2s}}+\big(\frac{1}{\mu_{\epsilon}}\big)^{\frac{[4s-(n-2s)\epsilon][2(2_{s}^{\sharp}-1-\epsilon)-1]}{2s}}
+\big(\frac{1}{\mu_{\epsilon}}\big)^{\frac{[4s-(n-2s)\epsilon](2(2_{s}^{\sharp}-1)-\epsilon)}{4s}}]\\&
\leq C\big(\frac{1}{\mu_{\epsilon}}\big)^{\frac{[4s-(n-2s)\epsilon](1+2\epsilon)}{2s}}.
\end{split}
\end{equation}
Finally, by putting \eqref{Left} and \eqref{Right} in the statement of Proposition \ref{prosition-identity}, we obtain
$$
\epsilon\mu_{\epsilon}^{\frac{(n-2s)\epsilon}{2s}}\leq C\big(\frac{1}{\mu_{\epsilon}}\big)^{\frac{[4s-(n-2s)\epsilon](1+2\epsilon)}{2s}},
$$
which leads to
$$\epsilon\leq C\big(\frac{1}{\mu_{\epsilon}}\big)^{2+\frac{[4s-(n-2s)\epsilon]\epsilon}{s}},$$
the result follows.
\end{proof}

\begin{lem}\label{thm:existenceofweaksolution}
We have the following limit
\begin{equation}\label{limit}
\lim\limits_{\epsilon\rightarrow0} \mu_{\epsilon}^{\epsilon}=1.
\end{equation}
\end{lem}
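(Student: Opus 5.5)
The argument is a short consequence of Lemma \ref{thm:uniquenessofweaksolution} and Lemma \ref{infinite}. Since $\mu_\epsilon\ge c>0$ for $\epsilon$ small (indeed $\mu_\epsilon\to\infty$ by Lemma \ref{infinite}), we have $\mu_\epsilon^{\epsilon}\ge 1$ eventually. It is therefore enough to show $\limsup_{\epsilon\to0}\mu_\epsilon^{\epsilon}\le 1$, or equivalently $\epsilon\log\mu_\epsilon\to0$.

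\textbf{Upper bound.} From \eqref{eq:energyestimateu} and $\mu_\epsilon\ge1$ we get
$$
\epsilon\le M\mu_\epsilon^{-2-\frac{[4s-(n-2s)\epsilon]\epsilon}{s}}\le M\mu_\epsilon^{-2}
$$
for $\epsilon$ small, since the extra exponent $\frac{[4s-(n-2s)\epsilon]\epsilon}{s}$ is nonnegative once $\epsilon<4s/(n-2s)$. This gives $\mu_\epsilon\le (M/\epsilon)^{1/2}$. Taking logarithms,
$$
0\le \epsilon\log\mu_\epsilon\le \frac{\epsilon}{2}\big(\log M+\log\tfrac1\epsilon\big)\longrightarrow 0
$$
as $\epsilon\to0$, because $\epsilon\log(1/\epsilon)\to0$. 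Exponentiating yields $\mu_\epsilon^{\epsilon}\to1$, which is \eqref{limit}.

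The only point needing care is the sign and size of the exponent in Lemma \ref{thm:uniquenessofweaksolution}. We only need $-2-\frac{[4s-(n-2s)\epsilon]\epsilon}{s}\le -\kappa$ for some fixed $\kappa>0$ uniformly in small $\epsilon$. This holds trivially, so there is no real obstacle: any polynomial control $\epsilon\lesssim\mu_\epsilon^{-\kappa}$ suffices. All the substantive work, namely the Pohozaev-type Proposition \ref{prosition-identity} together with the decay bound of Lemma \ref{cWU}, is already contained in the previous lemma.

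One should also note that the whole argument is carried out along the subsequence already fixed in the preceding lemmas. The conclusion is independent of the subsequence, so the full family satisfies $\mu_\epsilon^{\epsilon}\to1$.
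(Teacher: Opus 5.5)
Your argument is correct and is essentially the paper's: both obtain the limit directly from the bound $\epsilon\le M\mu_\epsilon^{-2-\frac{[4s-(n-2s)\epsilon]\epsilon}{s}}$ of Lemma \ref{thm:uniquenessofweaksolution} together with $\mu_\epsilon\to\infty$. The only difference is the final elementary step: the paper uses the mean value theorem, $|\mu_\epsilon^{\epsilon}-1|=\epsilon\mu_\epsilon^{t\epsilon}\log\mu_\epsilon\le M\mu_\epsilon^{-1-\frac{[4s-(n-2s)\epsilon]\epsilon}{s}}\log\mu_\epsilon\to0$, whereas you invert the bound to $\mu_\epsilon\le (M/\epsilon)^{1/2}$ and use $\epsilon\log(1/\epsilon)\to0$, which amounts to the same thing.
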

\begin{proof}
By the theorem of the mean and Lemma \ref{thm:uniquenessofweaksolution}, we have
\begin{equation}\label{lamta}
\big|\mu_{\epsilon}^{\epsilon}-1\big|=\big|\epsilon\mu_{\epsilon}^{t\epsilon}\log\mu_{\epsilon}\big|\leq M\mu_{\epsilon}^{-1-\frac{[4s-(n-2s)\epsilon]\epsilon}{s}}\log\mu_{\epsilon}
\end{equation}
for some $0<t<1$.
Hence \eqref{limit} can be derived by a direct computation as $\epsilon\rightarrow0$.
\end{proof}

\subsection{Proof of Theorem \ref{prondgr} and Theorem \ref{consequence}}

We shall give the proof of Theorems \ref{prondgr}-\ref{consequence}.

The following important identity will be used later to conclude the proofs of Theorem \ref{prondgr} and Theorem \ref{consequence}.
\begin{lem}\label{p1-00}
		We have that
		\begin{equation*}
			|x|^{-\mu}\ast W_i^{p_s}
			=\int_{\mathbb{R}^n}\frac{W_i^{p_s}(y)}{|x-y|^{\mu}}dy
			=\widetilde{\beta}_{n,\mu,s}W_i^{2_s^{\sharp}-2_{\mu,s}^{\ast}}(x),
		\end{equation*}
		where  $\widetilde{\beta}_{n,\mu,s}=\frac{\pi^{n/2}\Gamma\big(\frac{n-\mu}{2}\big)}{\Gamma(\frac{2n-\mu}{2})}\Big(\frac{2^{2s}\Gamma(\frac{n+2s}{2})\Gamma(\frac{2n-\mu}{2})}{\pi^{n/2}\Gamma\big(\frac{n-2s}{2}\big)\Gamma\big(\frac{n-\mu}{2}\big)}\Big)^{\frac{n-\mu}{n+2s-\mu}}$, $s\in(0,\frac{n}{2})$ and $\mu\in(0,n)$.
	\end{lem}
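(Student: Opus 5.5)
Since $p_s=2^{\ast}_{\mu,s}=\frac{2n-\mu}{n-2s}$ and $W_i=W[\xi,\lambda]$ is given by \eqref{defU}, I would reduce the identity to one explicit radial integral. The approach is to normalize by translation and dilation and then evaluate that integral with stereographic projection.

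\emph{Step 1 (exponent bookkeeping).} From \eqref{defU},
$$W^{p_s}(y)=\alpha_{n,\mu,s}^{p_s}\Big(\frac{\lambda}{1+\lambda^2|y-\xi|^2}\Big)^{\frac{2n-\mu}{2}}.$$
Since $2_s^{\sharp}-2^{\ast}_{\mu,s}=\frac{\mu}{n-2s}$, the target is
$$W^{\frac{\mu}{n-2s}}(x)=\alpha_{n,\mu,s}^{\frac{\mu}{n-2s}}\Big(\frac{\lambda}{1+\lambda^2|x-\xi|^2}\Big)^{\frac{\mu}{2}}.$$
Substituting $y=\xi+z/\lambda$ and writing $x=\xi+x'/\lambda$, the powers of $\lambda$ combine as $\lambda^{\frac{2n-\mu}{2}-n+\mu}=\lambda^{\mu/2}$. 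This matches the target, so it suffices to treat $\xi=0$, $\lambda=1$.

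\emph{Step 2 (core identity).} I would prove
$$\int_{\mathbb{R}^n}\frac{1}{|x-y|^{\mu}}\Big(\frac{1}{1+|y|^2}\Big)^{\frac{2n-\mu}{2}}dy=\frac{\pi^{n/2}\Gamma(\frac{n-\mu}{2})}{\Gamma(\frac{2n-\mu}{2})}\Big(\frac{1}{1+|x|^2}\Big)^{\frac{\mu}{2}}.$$
Let $\mathcal S:\mathbb{R}^n\to S^n$ be the inverse stereographic projection. It satisfies $d\sigma(\mathcal S y)=\big(\frac{2}{1+|y|^2}\big)^n dy$ and
$$|\mathcal Sx-\mathcal Sy|=\frac{2|x-y|}{(1+|x|^2)^{1/2}(1+|y|^2)^{1/2}}.$$
Substituting both relations turns the left side into
$$2^{-\mu}\,(1+|x|^2)^{-\mu/2}\int_{S^n}|\mathcal Sx-\eta|^{-\mu}\,d\sigma(\eta),$$
because the remaining $(1+|y|^2)$ factors cancel exactly. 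By rotation invariance the spherical integral does not depend on $\mathcal Sx$. Evaluating it at the north pole with the standard Beta-function computation gives
$$\int_{S^n}|\zeta-\eta|^{-\mu}d\sigma(\eta)=2^{n-\mu}\pi^{n/2}\frac{\Gamma(\frac{n-\mu}{2})}{\Gamma(n-\frac{\mu}{2})}.$$
Multiplying by the prefactor $2^{-\mu}\cdot 2^{\mu}$ produced by the two substitutions leaves the constant $\pi^{n/2}\Gamma(\frac{n-\mu}{2})/\Gamma(\frac{2n-\mu}{2})$.

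\emph{Step 3 (constant).} Combining Steps 1 and 2, the prefactor is $\alpha_{n,\mu,s}^{\,p_s-\frac{\mu}{n-2s}}=\alpha_{n,\mu,s}^{\frac{2(n-\mu)}{n-2s}}$. Inserting \eqref{afal}, whose exponent is $\frac{n-2s}{2(n+2s-\mu)}$, this equals
$$\Big(\frac{2^{2s}\Gamma(\frac{n+2s}{2})\Gamma(\frac{2n-\mu}{2})}{\pi^{n/2}\Gamma(\frac{n-2s}{2})\Gamma(\frac{n-\mu}{2})}\Big)^{\frac{n-\mu}{n+2s-\mu}}.$$
Multiplied by the constant from Step 2, this is exactly $\widetilde\beta_{n,\mu,s}$.

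The main obstacle is the careful tracking of powers of $2$ in Step 2. First I would check that the spherical integral is finite, which holds since $\mu<n$. Then I would verify the final constant in the special case $\mu\to0$, where both sides reduce to $\int(1+|y|^2)^{-n}dy=\pi^{n/2}\Gamma(n/2)/\Gamma(n)$.
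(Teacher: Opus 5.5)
Your proof is correct and arrives at exactly $\widetilde{\beta}_{n,\mu,s}$, reading $p_s=2^{\ast}_{\mu,s}$ and $W_i=W[\xi,\lambda]$. It takes a different route from the paper, which gives no computation and only points to the Fourier transforms of the Riesz and Bessel kernels.

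On that route, $|x|^{-\mu}$ has transform $c\,|\xi|^{\mu-n}$, and $(1+|x|^2)^{-a}$ has transform $c_a|\xi|^{a-\frac n2}K_{a-\frac n2}(|\xi|)$. For $a=\frac{2n-\mu}{2}$ the product is a multiple of $|\xi|^{\frac{\mu-n}{2}}K_{\frac{n-\mu}{2}}(|\xi|)$. This is the transform of $(1+|x|^2)^{-\mu/2}$ because $K_{-\nu}=K_\nu$, and the constant is read off from the normalizations of these transforms.

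You instead use conformal invariance. After inverse stereographic projection, the weight $(1+|y|^2)^{-(2n-\mu)/2}$ is exactly the one for which all $y$-dependent factors cancel. The convolution therefore becomes a rotation-invariant spherical integral, i.e. a constant times $(1+|x|^2)^{-\mu/2}$, and the constant is a single Beta integral. Your route is self-contained and explains why the exponent $2^{\ast}_{\mu,s}$ is the one that makes the identity hold. The Fourier route is shorter only if the tabulated transforms and their constants are taken for granted. Your scaling reduction in Step 1 and the exponent computation $\alpha_{n,\mu,s}^{2(n-\mu)/(n-2s)}=(\cdots)^{\frac{n-\mu}{n+2s-\mu}}$ in Step 3 are both correct.

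There is one bookkeeping slip in Step 2. The prefactor produced by the substitution is $2^{\mu-n}$, not $2^{-\mu}$. It comes from a factor $2^{\mu}$ in the chordal distance formula and $2^{-n}$ from $dy=2^{-n}(1+|y|^2)^n\,d\sigma$. It is this $2^{\mu-n}$ that cancels the $2^{n-\mu}$ of the spherical integral. As written, your displayed prefactor $2^{-\mu}$ and the sentence about ``$2^{-\mu}\cdot 2^{\mu}$'' contradict your final constant. That final constant, $\pi^{n/2}\Gamma(\frac{n-\mu}{2})/\Gamma(\frac{2n-\mu}{2})$, is nonetheless correct, as is your $\mu\to0$ check, so only the intermediate display needs fixing.
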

	\begin{proof}
		The conclusion follows by the Fourier transforms of the kernels of Riesz and Bessel potentials (see (37) in \cite{DHQWF} for example).
	\end{proof}
\begin{proof}[Proof of Theorems \ref{prondgr} and \ref{consequence}]
We have
\begin{equation}\label{laplacian}
A_{s}(\|u_{\epsilon}\|_{\infty}u_{\epsilon})=\alpha_{n,s}\mu_{\epsilon}\big(|x|^{-{(n-2s)}}\ast u_\epsilon^{2_{s}^{\sharp}-1-\epsilon}\big)u_\epsilon^{2_{s}^{\sharp}-2-\epsilon}\quad\mbox{in}\quad \Omega.
\end{equation}
We integrate the right-hand side of \eqref{laplacian}
$$\int_{\Omega}\int_{\Omega}\alpha_{n,s}\mu_{\epsilon}\frac{u_\epsilon^{2_{s}^{\sharp}-1-\epsilon}(y)u_\epsilon^{2_{s}^{\sharp}-2-\epsilon}(x)}{|x-y|^{n-2s}} dxdy=\int_{\Omega_{\epsilon}}\int_{\Omega_{\epsilon}}\alpha_{n,s}\mu_{\epsilon}^{\frac{n-2s}{2s}\epsilon}\frac{v_\epsilon^{2_{s}^{\sharp}-1-\epsilon}(y)v_\epsilon^{2_{s}^{\sharp}-2-\epsilon}(x)}{|x-y|^{n-2s}} dxdy.$$
Thus, combining \eqref{cU} by dominated convergence, Lemma \ref{finite}, Lemma \ref{thm:existenceofweaksolution} and Lemma \ref{p1-00}, we obtain
\begin{equation*}
\begin{split}
\lim\limits_{\epsilon\rightarrow0^{+}}\int_{\Omega}\alpha_{n,s}\mu_{\epsilon}\big(|x|^{-{(n-2s)}}\ast u_\epsilon^{2_{s}^{\sharp}-1-\epsilon}\big)u_\epsilon^{2_{s}^{\sharp}-2-\epsilon}&
=\alpha_{n,s}\int_{\mathbb{R}^n}\int_{\mathbb{R}^n}\frac{W^{2_{s}^{\sharp}-1}(y)W^{2_{s}^{\sharp}-2}(x)}{|x-y|^{n-2s}}dxdy\\&
=\alpha_{n,s}\widetilde{\beta}_{n,s}\int_{\mathbb{R}^n}W^{2_{s}^{\sharp}-1}(x)dx=b_{n,s},
\end{split}
\end{equation*}
where $\widetilde{\beta}_{n,s}:=\widetilde{\beta}_{n,n-2s,s}$ and
\begin{equation}\label{bns}
b_{n,s}:=\frac{|S^{n-1}|}{2}\frac{\Gamma(s)\Gamma(n/2)}{\Gamma((n+2s)/2)}\alpha^{2_{s}^{\sharp}}_{n,s}\widetilde{\beta}_{n,s}.
\end{equation}
Moreover, by H\"{o}lder inequality and the estimates of integration, we have proved that
\begin{equation}\label{linfinity}
\int_{\mathbb{R}^n}\frac{1}{|x-y|^{n-2s}}\frac{1}{(1+|y|)^{(n-2s)(2_{s}^{\sharp}-1-\epsilon)}}dy\in L^{\infty}(\mathbb{R}^n).
\end{equation}
Also using the bound \eqref{00cU} and \eqref{linfinity}, we find
\begin{equation*}
\begin{split}
\mu_{\epsilon}\big(\int_{\Omega}\frac{u_\epsilon^{2_{s}^{\sharp}-1-\epsilon}}{|x-y|^{n-2s}} \big)u_\epsilon^{2_{s}^{\sharp}-2-\epsilon}\leq
C_0\big(\frac{1}{\mu_{\epsilon}}\big)^{\frac{2_{s}^{\sharp}-2-\epsilon}{2s}[\frac{(n-2s)(2(2_{s}^{\sharp}-1-\epsilon)-1)}{2}-2s]}\big(\frac{1}{|x-x_{0}|}\big)^{(n-2s)(2_{s}^{\sharp}-2-\epsilon)}
\end{split}
\end{equation*}
for $x\neq x_0$ and some $C_0>0$. It is noticing that
$(n-2s)(2(2_{n,s}^{\ast}-\epsilon)-1)>4s$,
hence, we deduce that
\begin{equation}\label{LWUQ}
\alpha_{n,s}\mu_{\epsilon}\big(|x|^{-{(n-2s)}}\ast u_\epsilon^{2_{s}^{\sharp}-1-\epsilon}\big)u_\epsilon^{2_{s}^{\sharp}-2-\epsilon}\rightarrow0\quad\mbox{for}\quad x\neq x_0.
\end{equation}
From here we have
$$
A_{s}(\|u_{\epsilon}\|_{L^{\infty}(\Omega)}u_{\epsilon})\rightarrow b_{n,s}\delta_{x=x_0}\hspace{3mm}\mbox{as}\hspace{3mm}\epsilon\rightarrow0
$$
in the sense of distributions in $\Omega$. If we denote $\tilde{u}_{\epsilon}:=v(\|u_{\epsilon}\|_{L^{\infty}(\Omega)}u_{\epsilon})$,  then we have that $\tilde{u}_{\epsilon}$ converges to zero uniformly on any neighborhood $\omega$ of $\partial\Omega$, not containing $x_0$.
Observe that, we get from the integral representation of $w_{\epsilon}$ the s-harmonic extension of $u_{\epsilon}$ that
$$
\|u_{\epsilon}\|_{L^{\infty}(\Omega)}w_{\epsilon}(z)=\int_{\Omega}G_{\mathcal{C}}(z,t)\tilde{u}_{\epsilon}(t)dt=\int_{\Omega}[G_{\mathbb{R}^{n+1}_{+}}\big((x,y),t\big)-H_{\mathcal{C}}\big((x,y),t\big)]\tilde{u}_{\epsilon}(t)dt.
$$
On the other hand we have $H_{\mathcal{C}}((x,y),\cdot)$ is in $C_{loc}^{\infty}$ and $\|H_{\mathcal{C}}((x,y),\cdot)\|_{L^{2_{s}^{\ast}}(\Omega)}\leq C$ which holds uniformly on any neighborhood $\omega$ of $\partial\Omega$, not containing $x_0$.
In conclusion, we have
$$ \|u_{\epsilon}\|_{L^{\infty}(\Omega)}w_{\epsilon}(z)\rightarrow b_{n,s}G_{\mathcal{C}}(z,x_0)
\hspace{2mm}\mbox{in}\hspace{2mm}C_{loc}^{0}(\bar{\mathcal{C}}\setminus\{(x_0,0)\}).
$$
Furthermore, by elliptic regularity theory, pointwise convergence in $\mathcal{C}$ holds for all derivatives of $\|u_{\epsilon}\|_{L^{\infty}(\Omega)}w_{\epsilon}(z)$. In particular,
combining the regularity property of the function $H_{\mathcal{C}}$ for $t=0$ (see \cite{Kenig}) yields that
\begin{equation}\label{ox}
\begin{split}
\lim\limits_{\epsilon\rightarrow0^{+}}&\|u_{\epsilon}\|_{L^{\infty}(\Omega)}u_{\epsilon}(x)=b_{n,s}G(x,x_{0})\\&
\hspace{2mm}\mbox{in}\hspace{2mm}
\left\lbrace
\begin{aligned}
&C_{loc}^\alpha(\Omega\setminus\{x_0\})\hspace{2mm}\mbox{for all}\hspace{2mm}\alpha\in(0,2s)\hspace{2mm}\mbox{if}\hspace{2mm}s\in(0,1/2],\hspace{2mm}\mbox{as}\hspace{2mm}\epsilon\rightarrow0,
\\&C_{loc}^{1,\alpha}(\Omega\setminus\{x_0\})\hspace{2mm}\mbox{for all}\hspace{2mm}\alpha\in(0,2s-1)\hspace{2mm}\mbox{if}\hspace{2mm}s\in(1/2,1),\hspace{2mm}\mbox{as}\hspace{2mm}\epsilon\rightarrow0,
   \end{aligned}
\right.
\end{split}
\end{equation}
Concluding the proof.
\end{proof}

\section{Proof of Theorem \ref{prondgr-1} and Theorem \ref{remainder terms}}\label{remainder}
In this section, we are position to prove Theorems \ref{prondgr-1} and \ref{remainder terms}.
For the simplicity of notations,
we write $X_0$ instead of $(x_0,0)$ in the sequel. Let us define the sets
$$
B_r=B_{n+1}(X_0,r)\cap\mathbb{R}^{n+1}_{+},\hspace{2mm}\partial B_r^{+}=\partial B_r\cap\mathbb{R}^{n+1}_{+}\hspace{2mm}\mbox{and}\hspace{2mm} \mathcal{Z}_r=B_{n}(x_0,r) \hspace{2mm}\mbox{for}\hspace{2mm}r>0\hspace{2mm}\mbox{small}.
$$
\begin{lem}\label{GXX}
Let $\mathcal{O}_x(x):=\|w_{\epsilon}(\cdot,0)\|_{\infty}w_{\epsilon}(\cdot,0)$, it hold that
\begin{equation}\label{GXX-1}
\lim\limits_{\epsilon\rightarrow0}\int_{\delta}^{2\delta}\int_{\partial B_r^{+}}y^{1-2s}|\nabla \mathcal{O}_x|^2\nu_jdSdr=
b^2_{n,s}\int_{\delta}^{2\delta}\int_{\partial B_r^{+}}y^{1-2s}|\nabla G|^2\nu_jdSdr,
\end{equation}
and
\begin{equation}\label{GXX-2}
\lim\limits_{\epsilon\rightarrow0}\int_{\delta}^{2\delta}\int_{\partial B_r^{+}}y^{1-2s}\langle\nabla \mathcal{O}_x,\nu\rangle \partial_j\mathcal{O}_xdSdr
=b^2_{n,s}\int_{\delta}^{2\delta}\int_{\partial B_r^{+}}y^{1-2s}\langle\nabla G,\nu\rangle \partial_jGdSdr
\end{equation}
for $r>0$ small, where $\nu_j$ is the $j$-th component of $\nu$ and $\partial_j$ is the partial derivative with respect to the $k$-th variable.
\end{lem}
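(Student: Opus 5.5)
The two limits \eqref{GXX-1} and \eqref{GXX-2} follow from convergence of gradients of $\mathcal{O}_x$ on the region swept by the spheres $\partial B_r^{+}$, $r\in[\delta,2\delta]$. Here $\mathcal{O}_x$ is understood as $\|u_\epsilon\|_{\infty} w_\epsilon$ on $\mathcal{C}$, and $G$ stands for $G_{\mathcal{C}}(\cdot,x_0)$. Set $A_\delta:=\bigcup_{r\in[\delta,2\delta]}\partial B_r^{+}=\{z\in\mathbb{R}^{n+1}_+:\delta\le |z-X_0|\le 2\delta\}$, with $\delta$ small enough that $A_\delta\subset\overline{\mathcal{C}}$ stays away from $\partial_L\mathcal{C}$. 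By the coarea formula, each left-hand side equals $\int_{A_\delta} y^{1-2s}\,\Phi(\nabla\mathcal{O}_x)\,dz$, where $\Phi$ is the quadratic expression $|\xi|^2\nu_j$ or $\langle \xi,\nu\rangle\xi_j$ and $\nu=(z-X_0)/|z-X_0|$. It therefore suffices to show
\[
\int_{A_\delta} y^{1-2s}\big|\nabla \mathcal{O}_x-b_{n,s}\nabla G\big|^2dz\rightarrow0 .
\]
Indeed, writing $\Phi(a)-\Phi(b)$ as a bilinear difference and applying Cauchy--Schwarz with the weight $y^{1-2s}$ gives the claim, provided $\int_{A_\delta}y^{1-2s}|\nabla G|^2dz<\infty$. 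That holds because $G$ is smooth away from $X_0$ up to the weighted boundary behaviour, and $y^{1-2s}$ is integrable in $y$ near $0$.

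To prove this $L^2$ convergence, use the representation from the proof of Theorems \ref{prondgr}--\ref{consequence}:
\[
\mathcal{O}_x(z)=\int_\Omega G_{\mathcal{C}}(z,t)\,h_\epsilon(t)\,dt,\qquad h_\epsilon:=\alpha_{n,s}\mu_\epsilon\big(|x|^{-(n-2s)}\ast u_\epsilon^{2_s^\sharp-1-\epsilon}\big)u_\epsilon^{2_s^\sharp-2-\epsilon}.
\]
That proof showed $h_\epsilon\rightharpoonup b_{n,s}\delta_{x_0}$, with the mass converging to $b_{n,s}$. Split $\Omega=\mathcal{Z}_{\delta/2}\cup(\Omega\setminus\mathcal{Z}_{\delta/2})$.
\begin{itemize}
\item \textbf{Near part.} For $t\in\mathcal{Z}_{\delta/2}$ and $z\in A_\delta$ we have $|z-(t,0)|\ge\delta/2$, so $\nabla_zG_{\mathcal{C}}(z,t)$ is uniformly bounded and uniformly continuous in $t$. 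Weak-$*$ convergence of measures then gives $\int_{\mathcal{Z}_{\delta/2}}\nabla_zG_{\mathcal{C}}(z,t)h_\epsilon(t)\,dt\to b_{n,s}\nabla_zG_{\mathcal{C}}(z,x_0)$ uniformly on $A_\delta$.
\item \textbf{Far part.} The pointwise bound established in that proof (from Lemma \ref{cWU} and \eqref{linfinity}) gives $h_\epsilon(t)\le C\mu_\epsilon^{-\kappa}|t-x_0|^{-(n-2s)(2_s^\sharp-2-\epsilon)}$ with $\kappa>0$. Hence $\|h_\epsilon\|_{L^\infty(\Omega\setminus\mathcal{Z}_{\delta/2})}\to0$.
\end{itemize}

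The main obstacle is the far part: $\nabla_zG_{\mathcal{C}}(z,t)$ is singular when $t$ is close to the $x$-projection of points $z\in A_\delta$ with small $y$. One must show that $z\mapsto\int_{\Omega\setminus\mathcal{Z}_{\delta/2}}|\nabla_zG_{\mathcal{C}}(z,t)|\,|h_\epsilon(t)|\,dt$ tends to $0$ in $L^2(A_\delta,y^{1-2s}dz)$. The plan is to use the decomposition \eqref{1-Green}:
\begin{itemize}
\item The regular part $H_{\mathcal{C}}$ is handled exactly as in the proof of Proposition \ref{prosition-identity}, via the bound $\sup_t\int y^{1-2s}|\nabla_zH_{\mathcal{C}}|^2\le C$ quoted there from \cite{CKL}, together with Minkowski's inequality.
\item For the singular part, $|\nabla_z|(x-t,y)|^{-(n-2s)}|\le C|(x-t,y)|^{-(n-2s+1)}$. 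A Young/Minkowski estimate yields
\[
\big\|\textstyle\int|(\cdot-t,y)|^{-(n-2s+1)}|h_\epsilon(t)|\,dt\big\|_{L^2(A_\delta,y^{1-2s})}\le C\|h_\epsilon\|_{L^\infty}\sup_t\Big(\int_{A_\delta}y^{1-2s}|(x-t,y)|^{-2(n-2s+1)}dz\Big)^{1/2}.
\]
The last integral need not be finite, since the exponent gives local integrability only when $2(n-2s+1)<n+2-2s$, i.e. $n<2s$, which fails. So I would instead first integrate in $t$: $\int_{\Omega}|(x-t,y)|^{-(n-2s+1)}dt\le C\,y^{2s-1}$ (or $C|\log y|$, or $C$, depending on the sign of $2s-1$). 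Squaring against $y^{1-2s}$ gives $y^{2s-1}$, which is integrable near $y=0$ for all $s\in(0,1)$, and the logarithmic case is also integrable. This gives the bound $C\|h_\epsilon\|_{L^\infty(\Omega\setminus\mathcal{Z}_{\delta/2})}\to0$.
\end{itemize}

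Combining the near and far parts yields the weighted $L^2$ convergence of gradients on $A_\delta$, and hence both \eqref{GXX-1} and \eqref{GXX-2}.
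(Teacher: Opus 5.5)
Your proposal is correct and follows essentially the same route as the paper. Both arguments use the Green representation of $\mathcal{O}_x$ and split $\Omega$ into $\mathcal{Z}_{\delta/2}$ and its complement: the near part is governed by the bounded kernel $\nabla_z G_{\mathcal{C}}(z,t)$ and the concentration $h_\epsilon\rightharpoonup b_{n,s}\delta_{x_0}$, and the far part vanishes in $L^2(A_\delta, y^{1-2s}dz)$ because $h_\epsilon\to0$ uniformly away from $x_0$.

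The differences are only in execution. You prove the far-part bound explicitly where the paper defers it to Choi--Kim--Lee; integrating the singular kernel in $t$ first correctly sidesteps the non-integrable $\sup_t$ estimate. You also close with strong weighted $L^2$ convergence of the gradients plus Cauchy--Schwarz, where the paper uses dominated convergence.
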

\begin{proof}
By the Green's representation, we have
$$
\mathcal{O}_x(z)=\int_{\Omega}G_{\mathcal{C}}(z,t)\tilde{u}_{\epsilon}(t)dt,
$$
where $\tilde{u}_{\epsilon}:=A_{s}(\|u_{\epsilon}\|_{L^{\infty}(\Omega)}u_{\epsilon})$ in section \ref{consequence00}.
A direct differentiation yields
\begin{equation*}
\begin{split}
|\nabla \mathcal{O}_x(z)|^2=&\Big|\int_{\Omega}\nabla_{z}G_{\mathcal{C}}(z,t)\tilde{u}_{\epsilon}(t)dt\Big|^2\leq C\big(\sup\limits_{t\in\mathcal{Z}_{r/2}}\big|\nabla_{z}G_{\mathcal{C}}(z,t)\big|\big)^2
+\Big|\int_{\Omega\setminus \mathcal{Z}_{r/2}}\nabla_{z}G_{\mathcal{C}}(z,t)\tilde{u}_{\epsilon}(t)dt\Big|^2.
\end{split}
\end{equation*}
We next proceed similarly to \cite{CKL}. We find
$$
\int_{\delta}^{2\delta}y^{1-2s}\big(\sup\limits_{t\in\mathcal{Z}_{r/2}}\big|\nabla_{z}G_{\mathcal{C}}(z,t)\big)^2dz\leq C
\hspace{2mm}\mbox{and}\hspace{2mm} \lim\limits_{\epsilon\rightarrow0}\int_{\delta}^{2\delta}\big(\int_{\Omega\setminus \mathcal{Z}_{r/2}}\nabla_{z}G_{\mathcal{C}}(z,t)\tilde{u}_{\epsilon}(t)dt\big)^2dz=0.
$$
By combining this with the dominated convergence theorem, and $\mathcal{O}_x$ converges to $b_{n,s}G$ uniformly in on
any neighborhood $\omega$ of $\Omega$, not containing $x_0$, we finally derive \eqref{GXX-1} and \eqref{GXX-2} by taking limit.

\end{proof}

\begin{proof}[Proof of Theorem \ref{prondgr-1}].
We note that
$$
I_0:=\int_{\mathcal{Z}_{r}}\Big(\int_{\mathcal{Z}_{r}}\frac{x_j-t_j}{|x-t|^{\mu+2}}F(w)dt\Big)F(w)dx=-\int_{\mathcal{Z}_{r}}\Big(\int_{\mathcal{Z}_{r}}\frac{x_j-t_j}{|x-t|^{\mu+2}}F(w)dt\Big)F(w)dx,$$
which implies $I_0=0$.
If $w_{\epsilon}$ is a solution to \eqref{CF}, for each $1\leq j\leq n$, and using the following identity
$$y^{1-2s}\nabla w_{\epsilon}\cdot\nabla\partial_j w_{\epsilon}=div(y^{1-2s}\partial_jw_{\epsilon}\nabla w_{\epsilon})-\partial_j\cdot div(y^{1-2s}\nabla w_{\epsilon}),$$
the divergence theorem and \eqref{CFLL-00}, we obtain
\begin{equation*}
\begin{split}
\int_{\partial B_r^{+}}y^{1-2s}|\nabla w_{\epsilon}|^2\nu_jdS&=\int_{B_{r}}y^{1-2s}\partial_j|\nabla w_{\epsilon}|^2dz=2\int_{B_{r}}y^{1-2s}\nabla w_{\epsilon}\cdot\nabla \partial_jw_{\epsilon}dz\\&
=2k_{s}\int_{\partial B_r^{+}}y^{1-2s}\langle\nabla w_{\epsilon},\nu\rangle \partial_jw_{\epsilon}dS+2k_s\int_{\mathcal{Z}_{r}}\partial_{\nu}^{s}w_{\epsilon}\partial_jw_{\epsilon}dz\\&
=2\int_{\partial B_r^{+}}y^{1-2s}\langle\nabla w_{\epsilon},\nu\rangle \partial_jw_{\epsilon}dS+\frac{2k_{s}}{2_{s}^{\sharp}-1-\epsilon}\int_{\partial\mathcal{Z}_{r}}(\int_{\Omega\times\{0\}}\frac{w_{\epsilon}^{2_{s}^{\sharp}-1-\epsilon}}{|x-t|^{n-2s}}dt\Big)w_{\epsilon}^{2_{s}^{\sharp}-1-\epsilon}\nu_jdS_x\\&
\hspace{3mm}+\frac{2k_{s}}{2_{s}^{\sharp}-1-\epsilon}(n-2s)\int_{\mathcal{Z}_{r}}\Big(\int_{\Omega\setminus\mathcal{Z}_{r}}\frac{x_j-t_j}{|x-t|^{n-2s+2}}w_{\epsilon}^{2_{s}^{\sharp}-1-\epsilon}(t,0)dt\Big)w_{\epsilon}^{2_{s}^{\sharp}-1-\epsilon}dx.
\end{split}
\end{equation*}
Multiplying the above identity by the function $\|w_{\epsilon}(\cdot,0)\|_{\infty}^2$, and integrating in the interval $[\delta,2\delta]$ with respect to $r$, we see that
\begin{equation}\label{epsilonfrac}
\begin{split}
\int_{\delta}^{2\delta}\int_{\partial B_r^{+}}y^{1-2s}&|\nabla \mathcal{O}_x|^2\nu_jdSdr=
2\int_{\delta}^{2\delta}\int_{\partial B_r^{+}}y^{1-2s}\langle\nabla \mathcal{O}_x,\nu\rangle \partial_j\mathcal{O}_xdSdr\\&
+\frac{2k_{s}}{2_{s}^{\sharp}-1-\epsilon}\big\|w_{\epsilon}\big\|_{\infty}^2\int_{\delta}^{2\delta}\int_{\partial\mathcal{Z}_{r}}(\int_{\Omega\times\{0\}}\frac{w_{\epsilon}^{2_{s}^{\sharp}-1-\epsilon}}{|x-t|^{n-2s}}dt\Big)w_{\epsilon}^{2_{s}^{\sharp}-1-\epsilon}\nu_jdS_xdr\\&
+\frac{2k_{s}}{2_{s}^{\sharp}-1-\epsilon}(n-2s)\big\|w_{\epsilon}\big\|_{\infty}^2\int_{\delta}^{2\delta}\int_{\mathcal{Z}_{r}}\Big(\int_{\Omega\setminus\mathcal{Z}_{r}}\frac{(x_j-t_j)w_{\epsilon}^{2_{s}^{\sharp}-1-\epsilon}}{|x-t|^{n-2s+2}}dt\Big)w_{\epsilon}^{2_{s}^{\sharp}-1-\epsilon}dxdr
\end{split}
\end{equation}
for $r>0$ small. Due to $x_0\not\in\partial\mathcal{Z}_{r}$, and using Lemma \ref{cWU}, Lemma \ref{p1-00} and \eqref{VEPUSILONG}, we continue to estimate
\begin{equation}\label{fanshu}
\begin{split}
\big\|w_{\epsilon}(x,0)\big\|^{2}_{\infty}\int_{\partial\mathcal{Z}_{r}}(\int_{\Omega\times\{0\}}\frac{w_{\epsilon}^{2_{s}^{\sharp}-1-\epsilon}(t,0)}{|x-t|^{n-2s}}dt\Big)w_{\epsilon}^{2_{s}^{\sharp}-1-\epsilon}(x,0)\nu_jdS_x&
\leq C\big\|w_{\epsilon}(x,0)\big\|^{2}_{\infty}\int_{\partial\mathcal{Z}_{r}}w_{\epsilon}^{2_{s}^{\sharp}}dS_{x}\\&
\leq C\big\|w_{\epsilon}(x,0)\big\|^{2}_{\infty}\big\|w_{\epsilon}(x,0)\big\|^{-\frac{[4s-\epsilon(n-2s)]n}{2s(n-2s)}}_{\infty}\\&
\leq C\big\|w_{\epsilon}(x,0)\big\|^{-\frac{[8s^2-\epsilon (n-2s)n]}{2s(n-2s)}}_{\infty}.
\end{split}
\end{equation}
By taking the limit in $[\delta,2\delta]$, we find
\begin{equation}\label{G-1}
\begin{split}
\lim\limits_{\epsilon\rightarrow0}\big\|w_{\epsilon}(x,0)\big\|^{2}_{\infty}\int_{\delta}^{2\delta}\int_{\partial\mathcal{Z}_{r}}(\int_{\Omega\times\{0\}}\frac{w_{\epsilon}^{2_{s}^{\sharp}-1-\epsilon}}{|x-t|^{n-2s}}dt\Big)w_{\epsilon}^{2_{s}^{\sharp}-1-\epsilon}\nu_jdS_xdr
\leq  \lim\limits_{\epsilon\rightarrow0}\int_{\delta}^{2\delta}\big\|w_{\epsilon}\big\|^{-\frac{[8s^2-\epsilon (n-2s)n]}{2s(n-2s)}}_{\infty}dr=0
\end{split}
\end{equation}
for small $r>0$.
Since $x_0\not\in\Omega\setminus\mathcal{Z}_{r}$, we note that
\begin{equation*}
\begin{split}
&\big\|w_{\epsilon}(x,0)\big\|^{2}_{\infty}\int_{\mathcal{Z}_{r}}\Big(\int_{\Omega\setminus\mathcal{Z}_{r}}\frac{(x_j-t_j)w_\epsilon^{2_{s}^{\sharp}-1-\epsilon}}{|x-t|^{n-2s+2}}dt\Big)w_\epsilon^{2_{s}^{\sharp}-1-\epsilon}dx\\&\leq C
\frac{\big\|w_{\epsilon}(x,0)\big\|^{2}_{\infty}}{\big\|w_{\epsilon}(x,0)\big\|^{\frac{(n-2s)(2_{s}^{\sharp}-1-\epsilon)(2_{s}^{\sharp}-2-\epsilon)}{4s}}_{\infty}}\Big(\int_{\mathbb{R}^{n}}w_{\epsilon}^{2_{s}^{\sharp}-1-\epsilon}dx\Big)\Big(\int_{\Omega\setminus\mathcal{Z}_{r}}\frac{1}{|x-t|^{n-2s+1}}dt\Big)\\&
\leq
\begin{cases}
Cr^{2s-1}\big\|w_{\epsilon}(x,0)\big\|^{-\frac{8s^2-n(n-2s)\epsilon}{2s(n-2s)}}_{\infty} \quad\quad\quad\hspace{6mm}\mbox{ if } 0<s<\frac{1}{2},\\
C|\log r|\big\|w_{\epsilon}(x,0)\big\|^{-\frac{8s^2-n(n-2s)\epsilon}{2s(n-2s)}}_{\infty} \hspace{4.8mm}\quad\quad\quad\mbox{ if } s=\frac{1}{2},\\
C\big\|w_{\epsilon}(x,0)\big\|^{-\frac{8s^2-n(n-2s)\epsilon}{2s(n-2s)}}_{\infty}\quad\quad\hspace{7.2mm}\quad\quad\quad \mbox{ if } \frac{1}{2}<s<1,
\end{cases}
\end{split}
\end{equation*}
Hence we have
\begin{equation*}
\begin{split}
\lim\limits_{\epsilon\rightarrow0}&\big\|w_{\epsilon}(x,0)\big\|^{2}_{\infty}\int_{\delta}^{2\delta}\int_{\mathcal{Z}_{r}}\Big(\int_{\Omega\setminus\mathcal{Z}_{r}}\frac{(x_j-t_j)w_\epsilon^{2_{s}^{\sharp}-1-\epsilon}}{|x-t|^{n-2s+2}}dt\Big)w_\epsilon^{2_{s}^{\sharp}-1-\epsilon}dxdr
\\&\leq C\lim\limits_{\epsilon\rightarrow0}\int_{\delta}^{2\delta}\big\|w_{\epsilon}(x,0)\big\|^{-\frac{8s^2-n(n-2s)\epsilon}{2s(n-2s)}}_{\infty}\max\big\{r^{2s-1}, |\log r|, 1\big\}dr=0.
\end{split}
\end{equation*}
Combining this identity and \eqref{epsilonfrac}-\eqref{G-1} with Lemma \ref{GXX}, we conclude that
\begin{equation}\label{GX0}
\int_{\delta}^{2\delta}\int_{\partial B_r^{+}}y^{1-2s}|\nabla G|^2\nu_jdSdr=2\int_{\delta}^{2\delta}\int_{\partial B_r^{+}}y^{1-2s}\langle\nabla G,\nu\rangle \partial_jGdSdr
\end{equation}
for sufficiently small $\epsilon>0$. As the calculous of Theorem 1.2 in \cite{CKL}, we can show that
\begin{equation}\label{GX0-1}
\lim\limits_{\delta\rightarrow0}\frac{1}{\delta}\int_{\delta}^{2\delta}\int_{\partial B_r^{+}}y^{1-2s}|\nabla G|^2\nu_jdSdr=
2(n-2s)\gamma_{n,s}\frac{|S^{n-1}|}{2n}B(1-s,\frac{n+2}{2})\partial_j\phi(x_0),
\end{equation}
and
\begin{equation}\label{GX0-2}
\lim\limits_{\delta\rightarrow0}\frac{1}{\delta}2\int_{\delta}^{2\delta}\int_{\partial B_r^{+}}y^{1-2s}|\nabla G|^2\nu_jdSdr=2(n-2s+3)(n-2s)\gamma_{n,s}\frac{|S^{n-1}|}{2n}B(1-s,\frac{n+2}{2})\partial_j\phi(x_0).
\end{equation}
Consequently, the result of theorem follows by the identities \eqref{GX0}, \eqref{GX0-1} and \eqref{GX0-2}.
\end{proof}

\subsection{Proof of Theorem \ref{remainder terms}}
By exploiting the local form of the Pohozaev identity and blow-up analysis, we can prove the following result.
\begin{lem}\label{remainder-1}
It holds that
\begin{equation}\label{remainder-2}
\begin{split}
\lim\limits_{\epsilon\rightarrow0}&\kappa_{s}\bigg(\frac{n+2s}{2(2_{s}^{\sharp}-1-\epsilon)}-\frac{n-2s}{2}\bigg)\alpha^2_{n,s}\widetilde{\beta}_{n,n-2s,s}\mu_{\epsilon}^{\frac{4s+(n-2s)\epsilon}{2s}}\delta\int_{\mathbb{R}^n}W^{2_{s}^{\sharp}}(x)dx\\&
=b_{n,s}^2\int_{\delta}^{2\delta}\int_{\partial B_r^{+}}\bigg[y^{1-2s}\Big\langle\langle z-X_0, \nabla G\rangle\nabla G-\frac{|\nabla G|^2}{2}(z-X_0),\nu\Big\rangle+\frac{n-2s}{2}y^{1-2s}G \frac{\partial G}{\partial\nu}\bigg]dS_xdr.
\end{split}
\end{equation}
\end{lem}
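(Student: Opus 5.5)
Our plan is to use the local Pohozaev identity (the analogue of \eqref{Qx}) on the half-balls $B_r$ centred at $X_0$ instead of on $\mathcal{D}_r$, with $p=2_{s}^{\sharp}-1-\epsilon$ and $\mu=n-2s$. We multiply it by $\|u_\epsilon\|_\infty^2=\alpha_{n,s}^2\mu_\epsilon^2$ and integrate in $r\in[\delta,2\delta]$. Concretely, we integrate \eqref{local-po} (with $z$ replaced by $z-X_0$) over $B_r$. On the flat part $\mathcal{Z}_r\times\{0\}$ we use $\partial_\nu^s w_\epsilon=(|x|^{-(n-2s)}\ast w_\epsilon^{p})w_\epsilon^{p-1}$ and integrate by parts in $x$, exactly as in \eqref{pohozaev3}. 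This gives
\[
\kappa_s\Big(\tfrac{n}{p}-\tfrac{n-2s}{2}\Big)\int_{\mathcal{Z}_r}\Big(\int_\Omega\tfrac{w_\epsilon^p(t,0)}{|x-t|^{n-2s}}dt\Big)w_\epsilon^p(x,0)\,dx
=\int_{\partial B_r^+}\Big[\cdots\Big]dS+\mathcal{R}_\epsilon(r).
\]
The bracket on the right is the same boundary integrand as in \eqref{remainder-2}, but written for $w_\epsilon$. The remainder $\mathcal{R}_\epsilon(r)$ collects three pieces: the boundary term on $\partial\mathcal{Z}_r$, the commutator term with kernel $\langle x-x_0,x-t\rangle|x-t|^{-(n-2s)-2}$, and the interaction between $\mathcal{Z}_r$ and $\Omega\setminus\mathcal{Z}_r$. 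The prefactor $\frac{n}{p}-\frac{n-2s}{2}$ should be rewritten as $\frac{n+2s}{2p}-\frac{n-2s}{2}$ up to the symmetric part of the commutator. Indeed, symmetrizing in $x\leftrightarrow t$ over $\mathcal{Z}_r\times\mathcal{Z}_r$ gives $\langle x-x_0,x-t\rangle\to\frac12|x-t|^2$, which turns $\frac{n}{p}$ into $\frac{n}{p}-\frac{n-2s}{2p}=\frac{n+2s}{2p}$. This matches the constant in \eqref{remainder-2}.

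\textbf{Right-hand side.} After multiplication by $\|u_\epsilon\|_\infty^2$, the boundary integral is a quadratic expression in $\mathcal{O}_x$ and $\nabla\mathcal{O}_x$ on $\partial B_r^+$. Its $r$-integral converges to the $G$-expression by the argument of Lemma \ref{GXX}. That lemma handles $|\nabla\mathcal{O}_x|^2\nu_j$ and $\langle\nabla\mathcal{O}_x,\nu\rangle\partial_j\mathcal{O}_x$; the same Green-representation argument also gives convergence of the terms weighted by $(z-X_0)$ and of $\mathcal{O}_x\partial_\nu\mathcal{O}_x$, using the $C^0_{loc}$ convergence $\|u_\epsilon\|_\infty w_\epsilon\to b_{n,s}G_{\mathcal{C}}(\cdot,x_0)$ from Section \ref{consequence00}. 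The pieces of $\mathcal{R}_\epsilon$ we bound exactly as in \eqref{fanshu}--\eqref{G-1} and the subsequent estimate, using Lemma \ref{cWU}. These show that $\|u_\epsilon\|_\infty^2$ times the $\partial\mathcal{Z}_r$ terms and times the cross terms with $\Omega\setminus\mathcal{Z}_r$ are $O(\|u_\epsilon\|_\infty^{-c})$ with $c>0$, hence tend to zero after integration over $[\delta,2\delta]$.

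\textbf{Left-hand side.} We rescale with $v_\epsilon$. The double integral over $\mathcal{Z}_r$ becomes $\mu_\epsilon^{\frac{(n-2s)\epsilon}{2s}}$ times an integral of $v_\epsilon$ over the dilated ball; note $\mu_\epsilon^{\frac{(n-2s)\epsilon}{2s}}\to1$ by Lemma \ref{thm:existenceofweaksolution}. By the domination $v_\epsilon\le cW$ (Lemma \ref{cWU}), together with Lemma \ref{finite} and dominated convergence, the integral tends to $\iint W^{2_{s}^{\sharp}-1}(x)W^{2_{s}^{\sharp}-1}(t)|x-t|^{-(n-2s)}\,dx\,dt$. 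By Lemma \ref{p1-00} this equals $\widetilde\beta_{n,s}\int W^{2_{s}^{\sharp}}$. Multiplying by $\alpha_{n,s}^2\mu_\epsilon^2$ and by $\delta$ (from the $r$-integration) produces the stated left side. Here the exponent $\frac{4s+(n-2s)\epsilon}{2s}$ records the combined power of $\mu_\epsilon$ coming from $\|u_\epsilon\|_\infty^2$ and the Jacobian.

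\textbf{Main obstacle.} The hardest point is that the prefactor $\frac{n+2s}{2p}-\frac{n-2s}{2}$ is of order $\epsilon$. The identity therefore has to be exact up to errors that are $o(1)$ after multiplication by $\mu_\epsilon^{2}$; merely bounded errors will not do. In particular, the antisymmetric remainder of the commutator must be shown to be $o(\mu_\epsilon^{-2})$. For this we will use the oddness argument from the proof of Lemma \ref{thm:uniquenessofweaksolution}, centring at $x_\epsilon$ and controlling $|x_\epsilon-x_0|$. The tails outside $\mathcal{Z}_r$ we control through Lemma \ref{cWU} and the decay rate of $W$.
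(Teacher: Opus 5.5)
Your plan is essentially the paper's proof: the local Pohozaev identity on the half-balls $B_r$ centred at $X_0$ is multiplied by $\|u_\epsilon\|_\infty^2$ and integrated over $r\in[\delta,2\delta]$; the $\mathcal Z_r\times\mathcal Z_r$ part of the commutator is symmetrized so that $\frac np$ becomes $\frac{2n-\mu}{2p}=\frac{n+2s}{2p}$; the $\partial\mathcal Z_r$ and $\mathcal Z_r\times(\Omega\setminus\mathcal Z_r)$ terms are made negligible by Lemma \ref{cWU}; the $\partial B_r^+$ terms are passed to the limit as in Lemma \ref{GXX}; and the left side is rescaled and evaluated with Lemma \ref{p1-00}. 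The one correction concerns your ``main obstacle'': the kernel $|x-t|^{-(n-2s)-2}$, the weight $w_\epsilon^{p}(x,0)w_\epsilon^{p}(t,0)$ and the domain $\mathcal Z_r\times\mathcal Z_r$ are all invariant under $x\leftrightarrow t$, so the antisymmetric part $\frac12\langle x+t-2x_0,x-t\rangle$ of $\langle x-x_0,x-t\rangle$ integrates to exactly zero, and no oddness argument about $x_\epsilon$ is needed (nor would one be available, since neither $\mathcal Z_r$ nor $u_\epsilon$ is symmetric about $x_\epsilon$ and no rate for $|x_\epsilon-x_0|$ is known); the only non-symmetric commutator piece lives on $\mathcal Z_r\times(\Omega\setminus\mathcal Z_r)$, and your decay estimates already make it $o(\mu_\epsilon^{-2})$.
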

\begin{proof}
Recalling the proof of Theorem \ref{prondgr}, we find
$$
\lim\limits_{\epsilon\rightarrow0}\int_{\Omega}\tilde{u}_{\epsilon}(x)dx=b_{n,s}\hspace{2mm}\mbox{and}\hspace{2mm}
\lim\limits_{\epsilon\rightarrow0}\tilde{u}_{\epsilon}(t)=0\hspace{2mm}\mbox{in}\hspace{2mm}C_{loc}^{0}(\Omega\setminus\{x_0\}),
$$
where $b_{n,s}$ is defined in \eqref{bns}. Similarly to \eqref{local-po}, there exists some subset $\tilde{X}_0\in\mathbb{R}_{+}^{n+1}$ such that for a function $\bar{W}\in C^1(D)$ satisfying div$(y^{1-2s}\nabla\bar{W})=0$ in $X$, there holds
\begin{equation}\label{local-po-22}
\mbox{div}\Big\{2y^{1-2s}\langle z-X_0, \nabla\bar{W} \rangle\nabla\bar{W} -y^{1-2s}|\nabla\bar{W}|^2(z-X_0)\Big\}+(n-2s)y^{1-2s}|\nabla \bar{W}|^2=0\hspace{2mm}\mbox{in}\hspace{2mm}\tilde{X}_0.
\end{equation}
It is then natural to expect that we can establish a similar identity analogous to \eqref{Qx} $\mathcal{M}(\Omega,r/2)$ and $\mathcal{D}_r^{+}$ replaced by $\mathcal{Z}_r$ and $B_{r}^{+}$, respectively.
However, such an identity is still insufficient for proving Theorem \ref{remainder terms}, and we therefore need to develop a more refined estimate.
According to the symmetry of integration, we have
\begin{equation*}
\int_{\mathcal{Z}_r}w_\epsilon(x,0)\nabla\Big(x \int_{\mathcal{Z}_r}\frac{w_\epsilon^p(t,0)}{|x-t|^{\mu}}dtw_\epsilon^{p-1}(x,0)\Big)dx=
-\frac{2n-\mu}{2p}\int_{\mathcal{Z}_r}\int_{\mathcal{Z}_r}\frac{w_\epsilon^p(t,0)}{|x-t|^{\mu}}dtw_\epsilon^{p}(x,0)dx.
\end{equation*}
And by a direct computation, we have
\begin{equation*}
\int_{\mathcal{Z}_r}w_\epsilon(x,0)\nabla\Big(x \int_{\Omega\setminus\mathcal{Z}_r}\frac{w_\epsilon^p(t,0)}{|x-t|^{\mu}}dtw_\epsilon^{p-1}(x,0)\Big)dx=
\int_{\mathcal{Z}_r}\int_{\Omega\setminus\mathcal{Z}_r}\Big[-\frac{n}{p}\frac{w_\epsilon^p(t,0)}{|x-t|^{\mu}}+\frac{\mu}{p}x(x-t)\frac{w_\epsilon^p(t,0)}{|x-t|^{\mu+2}}\Big]w_\epsilon^{p}(x,0)dtdx.
\end{equation*}
Thus combining the above two inequalities, we get
\begin{equation*}
\begin{split}
\int_{\mathcal{Z}_r}\langle x, \nabla w_{\epsilon}\rangle\partial_{\nu}^{s}w_{\epsilon}dx&=
-\frac{2n-\mu}{2p}\int_{\mathcal{Z}_r}\int_{\mathcal{Z}_r}\frac{w^p(t,0)}{|x-t|^{\mu}}dtw^{p}(x,0)dx\\&
+\int_{\mathcal{Z}_r}\int_{\Omega\setminus\mathcal{Z}_r}\Big[-\frac{n}{p}\frac{w^p(t,0)}{|x-t|^{\mu}}+\frac{\mu}{p}x(x-t)\frac{w^p(t,0)}{|x-t|^{\mu+2}}\Big]w^{p}(x,0)dtdx.
\end{split}
\end{equation*}
On this basis, we can establish an identity analogous to \eqref{Qx}, and integrating both sides of this identity from $\delta$ to $2\delta$ in $r$, we obtain
\begin{equation}\label{Gx-1}
\begin{split}
&\Big(\frac{2n-\mu}{2p}-\frac{n-2s}{2}\Big)\big\|w_{\epsilon}(x,0)\big\|^2_{\infty}\kappa_{s}\int_{\delta}^{2\delta}
\int_{\mathcal{Z}_r} \int_{\mathcal{Z}_r}\frac{w_\epsilon^p(t,0)}{|x-t|^{\mu}}dtw_\epsilon^{p}(x,0)dxdr\\&\hspace{3.5mm}
+\Big(\frac{n}{p}-\frac{n-2s}{2}\Big)\big\|w_{\epsilon}(x,0)\big\|^2_{\infty}\kappa_{s}\int_{\delta}^{2\delta}\int_{\mathcal{Z}_r} \int_{\Omega\setminus\mathcal{Z}_r}\frac{w_\epsilon^p(t,0)}{|x-t|^{\mu}}dtw_\epsilon^{p}(x,0)dxdr\\&
=\int_{\delta}^{2\delta}\int_{\partial B_r^{+}}y^{1-2s}\bigg[\Big\langle\langle z-X_0, \nabla \mathcal{O}_x\rangle\nabla \mathcal{O}_x-\frac{|\nabla \mathcal{O}_x|^2}{2}(z-X_0),\nu\Big\rangle+\frac{n-2s}{2}\mathcal{O}_x \frac{\partial \mathcal{O}_x}{\partial\nu}\bigg]dS_xdr\\&
\hspace{3.5mm}+\frac{\mu}{p}\big\|w_{\epsilon}(x,0)\big\|^2_{\infty}\int_{\delta}^{2\delta}\int_{\mathcal{Z}_r} \int_{\Omega}x(x-t)\frac{w_\epsilon^p(t,0)}{|x-t|^{\mu+2}}dtw_\epsilon^{p}(x,0)dxdr
\\&\hspace{3.5mm}+\frac{1}{p}\big\|w_{\epsilon}(x,0)\big\|^2_{\infty}\int_{\delta}^{2\delta}\int_{\partial\mathcal{Z}_r} \int_{\Omega}\frac{w_\epsilon^p(t,0)}{|x-t|^{\mu}}dtw_\epsilon^{p}(x,0)\langle x-x_0,\nu\rangle dS_xdr.
\end{split}
\end{equation}
Let us estimate each term on the RHS and the second term of the LHS.
We note that
$$
\Big|\Big\langle\langle z-X_0, \nabla \mathcal{O}_x\rangle\nabla \mathcal{O}_x-\frac{|\nabla \mathcal{O}_x|^2}{2}(z-X_0),\nu\Big\rangle\Big|\leq C\big|\nabla\mathcal{O}_x\big|^2
$$
on the interval $[\delta,2\delta]$. By the dominated convergence theorem,
similar to \eqref{GXX-1} in the proof of Lemma \ref{GXX}, one has
\begin{equation}\label{Gx-2}
\begin{split}
&\lim\limits_{\epsilon\rightarrow0}\int_{\delta}^{2\delta}\int_{\partial B_r^{+}}y^{1-2s}\Big\langle\langle z-X_0, \nabla \mathcal{O}_x\rangle\nabla \mathcal{O}_x-\frac{|\nabla \mathcal{O}_x|^2}{2}(z-X_0),\nu\Big\rangle dS_xdr\\&
=b_{n,s}^2\int_{\delta}^{2\delta}\int_{\partial B_r^{+}}y^{1-2s}\Big\langle\langle z-X_0, \nabla G\rangle\nabla G-\frac{|\nabla G|^2}{2}(z-X_0),\nu\Big\rangle dS_xdr,
\end{split}
\end{equation}
where $b_{n,s}$ is defined in \eqref{bns}. Similarly, we have the following estimate
\begin{equation}\label{Gx-3}
\lim\limits_{\epsilon\rightarrow0}\int_{\delta}^{2\delta}\int_{\partial B_r^{+}}y^{1-2s}\mathcal{O}_x \frac{\partial \mathcal{O}_x}{\partial\nu}dS_xdr=
b_{n,s}^2\int_{\delta}^{2\delta}\int_{\partial B_r^{+}}y^{1-2s}G \frac{\partial G}{\partial\nu}dS_xdr.
\end{equation}
In additional, applying the symmetry, we have
\begin{equation*}
\int_{\Omega}\Big(\int_{\Omega}t(t-x)\frac{u_{\epsilon}^{p}(t)}{|x-t|^{n-2s+2}} dt\Big)u_{\epsilon}^{p}(x)dx=-\int_{\Omega}\Big(\int_{\Omega}(-t)(t-x)\frac{u_{\epsilon}^{p}(t)}{|x-t|^{n-2s+2}} dt\Big)u_{\epsilon}^{p}(x)dx.
\end{equation*}
which leads to
\begin{equation*}
\begin{split}
\int_{\Omega}\Big(\int_{\Omega}x(x-t)\frac{u_{\epsilon}^{p}(t)}{|x-t|^{n-2s+2}} dt\Big)u_{\epsilon}^{p}(x)dx&=\frac{1}{2}\int_{\Omega}\int_{\Omega}\frac{[x(x-t)-(-t)(t-x)]u_{\epsilon}^{p}(t)u_{\epsilon}^{p}(x)}{|x-t|^{n-2s+2}} dxdt\\&
=\frac{1}{2}\int_{\Omega}\int_{\Omega}\frac{u_{\epsilon}^{p}(t)u_{\epsilon}^{p}(x)}{|x-t|^{n-2s}} dxdt.
\end{split}
\end{equation*}
Thus, by choosing $p=2_{s}^{\sharp}-1-\epsilon$ and $\mu=n-2s$, together with Lemma \ref{cWU} and Hardy-Littlewood-Sobolev inequality, we see that the second term on the RHS of \eqref{Gx-1} is handled as
\begin{equation}\label{Gx-4}
\begin{split}
&\big\|w_{\epsilon}(x,0)\big\|^2_{\infty}\int_{\mathcal{Z}_r} \int_{\Omega}x(x-t)\frac{w_\epsilon^{2_{s}^{\sharp}-1-\epsilon}}{|x-t|^{n-2s+2}}dtw_\epsilon^{2_{s}^{\sharp}-1-\epsilon}dx
\\&\leq C\big\|w_{\epsilon}(x,0)\big\|^{2}_{\infty}\big\|w_{\epsilon}(x,0)\big\|^{-\frac{[4s-\epsilon(n-2s)](1+\epsilon)}{2s}}_{\infty}\Big(\int_{\Omega}\big(\frac{1}{1+|x|}dx\big)^{\frac{2n[n+2s-\epsilon(n-2s)]}{n+2s}}\Big)^{(n+2s)/n}
\\&\leq C\big\|w_{\epsilon}(x,0)\big\|^{-\frac{\epsilon[6s-n-\epsilon(n-2s)]}{2s}}_{\infty}.
\end{split}
\end{equation}
For the third term, similar to \eqref{fanshu}, we have
 \begin{equation}\label{Gx-5}
\begin{split}
\Big\|w_{\epsilon}(\cdot,0)\Big\|^2_{\infty}\int_{\partial\mathcal{Z}_r} \int_{\Omega}\frac{w_\epsilon^{2_{s}^{\sharp}-1-\epsilon}(t,0)}{|x-t|^{n-2s}}dtw_\epsilon^{2_{s}^{\sharp}-1-\epsilon}(x,0)\langle x-x_0,\nu\rangle dS_x\leq C\big\|w_{\epsilon}(x,0)\big\|^{-\frac{[8s^2-\epsilon (n-2s)n]}{2s(n-2s)}}_{\infty}.
\end{split}
\end{equation}
Taking the limit in \eqref{Gx-4} and \eqref{Gx-5}, we obtain
\begin{equation}\label{Gx-6}
\begin{split}
&\lim\limits_{\epsilon\rightarrow0}\Big\|w_{\epsilon}(x,0)\Big\|^2_{\infty}\int_{\delta}^{2\delta}\bigg[\frac{n-2s}{2_{s}^{\sharp}-1-\epsilon}\int_{\mathcal{Z}_r} \int_{\Omega}x(x-t)\frac{w_\epsilon^{2_{s}^{\sharp}-1-\epsilon}(t,0)}{|x-t|^{n-2s+2}}dtw_\epsilon^{2_{s}^{\sharp}-1-\epsilon}(x,0)dx
\\&\hspace{3.5mm}+\frac{1}{2_{s}^{\sharp}-1-\epsilon}\int_{\partial\mathcal{Z}_r} \int_{\Omega}\frac{w_\epsilon^{2_{s}^{\sharp}-1-\epsilon}(t,0)}{|x-t|^{n-2s}}dtw_\epsilon^{2_{s}^{\sharp}-1-\epsilon}(x,0)\langle x-x_0,\nu\rangle dS_x\bigg]dr=0.
\end{split}
\end{equation}
For the second term of the LHS, similar to \eqref{Gx-5} and \eqref{Gx-6}, we also have
 \begin{equation}\label{Gx-5-1}
\begin{split}
\lim\limits_{\epsilon\rightarrow0}\Big\|w_{\epsilon}(x,0)\Big\|^2_{\infty}\int_{\delta}^{2\delta}\int_{\mathcal{Z}_r} \int_{\Omega\setminus\mathcal{Z}_r}\frac{w_\epsilon^{2_{s}^{\sharp}-1-\epsilon}(t,0)}{|x-t|^{n-2s}}dtw_\epsilon^{2_{s}^{\sharp}-1-\epsilon}(x,0)dx=0.
\end{split}
\end{equation}
Combining this bound with \eqref{Gx-1}-\eqref{Gx-3} and \eqref{Gx-6}-\eqref{Gx-5-1}, we see clearly that
\begin{equation}\label{Gx-7}
\begin{split}
&\lim\limits_{\epsilon\rightarrow0}\Big\|w_{\epsilon}(x,0)\Big\|^2_{\infty}\kappa_{s}\bigg(\frac{n+2s}{2(2_{s}^{\sharp}-1-\epsilon)}-\frac{n-2s}{2}\bigg)\int_{\delta}^{2\delta}\int_{\mathcal{Z}_r} \int_{\mathcal{Z}_r}\frac{w_\epsilon^{2_{s}^{\sharp}-1-\epsilon}(t,0)}{|x-t|^{n-2s}}dtw_\epsilon^{2_{s}^{\sharp}-1-\epsilon}(x,0)dxdr.
\\&=b_{n,s}^2\int_{\delta}^{2\delta}\int_{\partial B_r^{+}}\bigg[y^{1-2s}\Big\langle\langle z-X_0, \nabla G\rangle\nabla G-\frac{|\nabla G|^2}{2}(z-X_0),\nu\Big\rangle+\frac{n-2s}{2}y^{1-2s}G \frac{\partial G}{\partial\nu}\bigg]dS_xdr.
\end{split}
\end{equation}

On the other hand, we continue to eatimate
 \begin{equation}\label{Gx-8}
\begin{split}
 &\Big\|w_{\epsilon}(x,0)\Big\|^2_{\infty}\int_{\mathcal{Z}_r}\int_{\mathcal{Z}_r}\frac{w_\epsilon^{2_{s}^{\sharp}-1-\epsilon}(t,0)}{|x-t|^{n-2s}}dtw_\epsilon^{2_{s}^{\sharp}-1-\epsilon}(x,0)dx\\&
 =\alpha^2_{n,s}\mu_{\epsilon}^2\int_{\frac{2_{s}^{\sharp}-2-\epsilon}{2s}(\mathcal{Z}_r-x_\epsilon)}\int_{\frac{2_{s}^{\sharp}-2-\epsilon}{2s}(\mathcal{Z}_r-x_\epsilon)}\mu_{\epsilon}^{\frac{(n-2s)\epsilon}{2s}}\frac{v_\epsilon^{2_{s}^{\sharp}-1-\epsilon}(y)v_\epsilon^{2_{s}^{\sharp}-1-\epsilon}(x)}{|x-t|^{n-2s}} dtdx
 \\&=\alpha^2_{n,s}\mu_{\epsilon}^{\frac{4s+(n-2s)\epsilon}{2s}}\int_{\frac{2_{s}^{\sharp}-2-\epsilon}{2s}(\mathcal{Z}_r-x_\epsilon)}\int_{\frac{2_{s}^{\sharp}-2-\epsilon}{2s}(\mathcal{Z}_r-x_\epsilon)}\frac{v_\epsilon^{2_{s}^{\sharp}-1-\epsilon}(y)v_\epsilon^{2_{s}^{\sharp}-1-\epsilon}(x)}{|x-t|^{n-2s}} dtdx.
 \end{split}
\end{equation}
and by Lemma \ref{p1-00} we note that
\begin{equation}\label{Gx-9}
\begin{split}
\int_{\frac{2_{s}^{\sharp}-2-\epsilon}{2s}(\mathcal{Z}_r-x_\epsilon)}\int_{\frac{2_{s}^{\sharp}-2-\epsilon}{2s}(\mathcal{Z}_r-x_\epsilon)}\frac{v_\epsilon^{2_{s}^{\sharp}-1-\epsilon}(y)v_\epsilon^{2_{s}^{\sharp}-1-\epsilon}(x)}{|x-t|^{n-2s}}dtdx
&=\int_{\mathbb{R}^n}\int_{\mathbb{R}^n}\frac{W^{2_{s}^{\sharp}-1}(y)W^{2_{s}^{\sharp}-1}(x)}{|x-y|^{n-2s}}dtdx
\\&=\widetilde{\beta}_{n,s}\int_{\mathbb{R}^n}W^{2_{s}^{\sharp}}(x)dx=B_{n,s}\widetilde{\beta}_{n,s},
\end{split}
\end{equation}
where
$$
B_{n,s}=\sigma_n\int_{0}^{\infty}\frac{r^{n-1}}{(1+r^2)^n}dr.
$$
Coupling \eqref{Gx-6}, \eqref{Gx-7} and \eqref{Gx-8}, \eqref{remainder-2} is proved.
\end{proof}

We are now in position to conclude the proof of Theorem \ref{remainder terms}.
\begin{proof}[Proof of Theorem \ref{remainder terms}]
Let us consider the functions
$$G(z)=\frac{\gamma_{n,s}}{|z-X_0|^{n-2s}}-H(z)\hspace{2mm}\mbox{and}\hspace{2mm}\nabla G(z)=-\gamma_{n,s}(n-2s)\frac{z-X_0}{|z-X_0|^{n-2s+2}}-\nabla H(z).$$
Substituting the above $G$ and $\nabla G$ into the right-hand side of \eqref{remainder-2} yields
\begin{equation*}
\begin{split}
\lim\limits_{\epsilon\rightarrow0}&2\kappa_{s}\bigg(\frac{n+2s}{2(2_{s}^{\sharp}-1-\epsilon)}-\frac{n-2s}{2}\bigg)\alpha^2_{n,s}\widetilde{\beta}_{n,s}\mu_{\epsilon}^{\frac{4s+(n-2s)\epsilon}{2s}}\delta\int_{\mathbb{R}^n}W^{2_{s}^{\sharp}}(x)dx\\&
=(n-2s)^2\gamma_{n,s}b_{n,s}^2\lim\limits_{r\rightarrow0}\bigg[2\int_{\partial B_{2r}^{+}}\frac{y^{1-2s}}{(2r)^{n-2s+1}}H(z)dS-\int_{\partial B_{r}^{+}}\frac{y^{1-2s}}{r^{n-2s+1}}H(z)dS\bigg]\\&
+\lim\limits_{\delta\rightarrow0}\frac{1}{\delta}\int_{\delta}^{2\delta}\int_{\partial B_r^{+}}y^{1-2s}O\Big(\big\langle\nu,\nabla H(z)\big\rangle\Big)\Big(\frac{1}{r^{n-2s}}+H(z)\Big)+r|\nabla H(z)|^2\bigg]dSdr=\mathcal{Q}_1+\mathcal{Q}_2.
\end{split}
\end{equation*}
According to Lemma 2.9 in \cite{CDS}, we find that $\partial_iH_{\mathcal{C}}(\cdot,x_0)$ has a bounded H\"{o}lder norm over a small neighborhood of $x_0$ for any $i=1,\cdots,n$. Then we can derive $\mathcal{Q}_2=0$. As a consequence, we deduce that
\begin{equation*}
\begin{split}
\lim\limits_{\epsilon\rightarrow0}\bigg(\frac{n+2s}{2(2_{s}^{\sharp}-1-\epsilon)}-\frac{n-2s}{2}\bigg)&\mu_{\epsilon}^{\frac{4s+(n-2s)\epsilon}{2s}}\int_{\mathbb{R}^n}W^{2_{s}^{\sharp}}(x)dx
\\ \rightarrow&\frac{(n-2s)^2\gamma_{n,s}b_{n,s}^2}{2\kappa_{s}\alpha^2_{n,s}\widetilde{\beta}_{n,s}}\phi(x_0)\lim\limits_{r\rightarrow0}\int_{\partial B_{r}^{+}}\frac{y^{1-2s}}{r^{n+2s-1}}dS=\frac{(n-2s)^2\gamma_{n,s}b_{n,s}^2}{2\kappa_{s}\alpha^2_{n,s}\widetilde{\beta}_{n,s}}M_{n,s}\phi(x_0)
\end{split}
\end{equation*}
as $\delta\rightarrow0$, where
$$
M_{n,s}:=\sigma_n\int_{0}^{1}\frac{r^{n-1}}{(1-r^2)^s}dr.
$$
Combined with Lemma \ref{thm:existenceofweaksolution}, we complete the proof.
\end{proof}
\section{Proof of Theorem \ref{emm} and Theorem \ref{emm-1}}\label{section7}
In this section, we are devoted to show that Theorems \ref{emm} and \ref{emm-1}.
Set $w_{\epsilon}\in\mathcal{D}^{1,2}(\mathcal{C};y^{1-2s})$ be the $s$-harmonic extensions of the elliptic equation to $\mathcal{C}=\Omega\times(0,\infty)$, that is, $w_{\epsilon}$ satisfies
\begin{equation}\label{CF1010}
\left\lbrace
\begin{aligned}
&-\mbox{div}(y^{1-2s}\nabla w_{\epsilon})=0\hspace{4.14mm}\quad\quad \quad \quad \quad \quad \quad \quad  \mbox{in}\hspace{2mm} \mathcal{C},\\
&w_{\epsilon}=0\quad \quad \quad \quad \quad \quad \quad \hspace{12mm}\hspace{2mm}\hspace{8.9mm}\hspace{10mm} \mbox{on}\hspace{2mm}\partial_{L}\mathcal{C},\\
&w_{\epsilon}>0\quad \quad \quad \quad \quad \quad \quad \quad \hspace{10mm}\hspace{8.5mm}\hspace{10mm}\hspace{1mm}\mbox{in}\hspace{2mm}\mathcal{C},\\
&\partial_{\nu}^{s}w_{\epsilon}=\big(|x|^{-\mu}\ast w_{\epsilon}^{2_{\mu,s}^{\ast}}\big)w_{\epsilon}^{2_{\mu,s}^{\ast}-1}+\epsilon w_{\epsilon}\hspace{7.8mm}\quad \mbox{in}\hspace{2mm}\Omega\times\{y=0\},
\end{aligned}
		\right.
\end{equation}
We choose $x_\epsilon\in\Omega$ and the number $\mu_{\epsilon}>0$ such that
$\alpha_{n,\mu,s}\mu_{\epsilon}=\|u_{\epsilon}\|_{\infty}=u_\epsilon(x_\epsilon),\hspace{2mm}\mbox{where}\hspace{2mm}\alpha_{n,\mu,s}
$
is defined in \eqref{afal}.
We define a family of rescaled functions $v_{\epsilon}$ and $\widetilde{W}_{\epsilon}(z)$ as follows
$$v_{\epsilon}(x)=\mu_{\epsilon}^{-1}u_{\epsilon}(\mu_{\epsilon}^{-\frac{2}{n-2s}}x+x_{\epsilon})\quad\mbox{for}\quad x\in\Omega_{\epsilon}:=\mu_{\epsilon}^{\frac{2}{n-2s}}(\Omega-x_{\epsilon}).$$
and
$$
\widetilde{W}_{\epsilon}(z):=\mu_{\epsilon}^{-1}w_{\epsilon}(\mu_{\epsilon}^{-\frac{2}{n-2s}}z+x_{\epsilon})\quad\mbox{for}\quad z\in\mathcal{C}_{\epsilon}:=\mu_{\epsilon}^{\frac{2}{n-2s}}(\mathcal{C}-x_{\epsilon}).
$$
We set $\mathcal{C}_{\epsilon}^{\ast}:=T(\mathcal{C}_{\epsilon})$ and Kelvin transform transformation
\begin{equation*}
V_{\epsilon}(x)=\frac{1}{|x|^{n-2s}}v_{\epsilon}\big(\frac{x}{|x|^2}\big)\hspace{2mm}\mbox{and}\hspace{2mm}\widetilde{W}_{\epsilon}^{\ast}(z)=\frac{1}{|z|^{n-2s}}\widetilde{W}_{\epsilon}\big(\frac{z}{|z|^2}\big),
\end{equation*}
Then $\widetilde{W}_{\epsilon}^{\ast}$ satisfies
\begin{equation}\label{1-CF101}
\left\lbrace
\begin{aligned}
&-\mbox{div}(y^{1-2s}\nabla \widetilde{W}_{\epsilon}^{\ast})=0\hspace{4.14mm}\quad \quad \quad\quad \quad\quad\quad \quad \quad \quad\quad \quad \quad \quad  \hspace{9mm}\mbox{in}\hspace{2mm} \mathcal{C}_{\epsilon}^{\ast},\\
&\widetilde{W}_{\epsilon}^{\ast}=0\quad \quad \quad \quad \quad \quad\quad\quad \quad \quad \quad \quad \quad\hspace{12mm}\hspace{2mm}\hspace{10.9mm}\hspace{10mm}\hspace{7mm} \mbox{on}\hspace{2mm}\partial_{L}\mathcal{C}_{\epsilon}^{\ast},\\
&\widetilde{W}_{\epsilon}^{\ast}>0\quad \quad \quad \quad \quad \quad\quad\quad \quad \quad \quad \quad \quad\quad \hspace{10mm}\hspace{10.5mm}\hspace{10mm}\hspace{7mm}\hspace{1mm}\mbox{in}\hspace{2mm}\mathcal{C}_{\epsilon}^{\ast},\\
&\partial_{\nu}^{s}\widetilde{W}_{\epsilon}^{\ast}=\big(|x|^{-\mu}\ast (\widetilde{W}_{\epsilon}^{\ast})^{2_{\mu,s}^{\ast}}\big)(\widetilde{W}_{\epsilon}^{\ast})^{2_{\mu,s}^{\ast}-1}+\epsilon\frac{1}{\mu_{\epsilon}^{4s/(n-2s)}|x|^{4s}}\widetilde{W}_{\epsilon}^{\ast}\hspace{5.8mm} \mbox{in}\hspace{2mm}T(\Omega_{\epsilon}\times\{y=0\}).
\end{aligned}
		\right.
\end{equation}
As a preparation for the proof of Theorem \ref{emm}, we first consider the following result.
\begin{lem}\label{2-UV1}
Assume that $w\in H_{0,L}^{s}(\mathcal{C})$ is a solution of problem \eqref{CF1010}. Then for each $\rho>0$ and $q>\frac{n}{s}$, there is a constant $C=C(\rho,q)>0$ such that
\begin{equation}\label{1-idengity0}
\begin{split}
&\min\limits_{r\in[\rho,2\rho]}\Big|s\epsilon\int_{\mathcal{M}(\Omega,r/2)\times\{0\}}w^{2}dx\Big|\\
\leq&C\bigg[\Big(\int_{\mathcal{Q}(\Omega,2\rho)}\Big|\big(\int_{\Omega}\frac{w^p(t,0)}{|x-t|^{\mu}}dt\big)w^{p-1}(x,0)\Big|^qdx\Big)^{\frac{2}{q}}+\Big(\int_{\mathcal{M}(\Omega,\rho/2)}\Big|\big(\int_{\Omega}\frac{w^p(t,0)}{|x-t|^{\mu}}dt\big)w^{p-1}(x,0)\Big|dx\Big)^{2}\\&
+\int_{\mathcal{Q}(\Omega,2\rho)}\Big|\big(\int_{\Omega}\frac{w^p(t,0)}{|x-t|^{\mu}}dt\big)w^{p}(x,0)\Big|dx+\int_{\mathcal{M}(\Omega,r/2)}\Big(\int_{\Omega}x(x-t)\frac{w^p(t,0)}{|x-t|^{\mu+2}} dt\Big)w^p(x,0)dx\bigg]\\&+C\Big[\int_{\mathcal{Q}(\Omega,2\rho)}w^2dx\Big].
\end{split}
\end{equation}
 \end{lem}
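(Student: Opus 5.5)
We follow the proof of Proposition \ref{prosition-identity} essentially verbatim, with $p=2_{\mu,s}^{\ast}$, and track the one new term produced by the linear perturbation $\epsilon w$ on $\Omega\times\{0\}$. The starting point is again the local Pohozaev identity \eqref{local-po}, integrated over $\mathcal{D}_r$. Using the boundary condition $\partial_{\nu}^{s}w=(|x|^{-\mu}\ast w^{p})w^{p-1}+\epsilon w$, the boundary integral $\kappa_s\int_{\mathcal{M}(\Omega,r/2)}\langle x,\nabla w\rangle\partial_\nu^s w\,dx$ splits into the Hartree part, handled exactly as in \eqref{pohozaev3}, and the extra part
$$
\epsilon\int_{\mathcal{M}(\Omega,r/2)}\langle x,\nabla w\rangle w\,dx=-\frac{n\epsilon}{2}\int_{\mathcal{M}(\Omega,r/2)}w^2dx+\frac{\epsilon}{2}\int_{\partial\mathcal{M}(\Omega,r/2)}w^2\langle x,\nu\rangle dS_x .
$$
Likewise, \eqref{pohozaev4} acquires the term $\kappa_s\epsilon\int_{\mathcal{M}(\Omega,r/2)}w^2dx$.

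Next we substitute into \eqref{pohozaev2} and collect the coefficients. For the critical exponent $p=\frac{2n-\mu}{n-2s}$, the Hartree volume terms cancel, since $\frac{2n-\mu}{2p}=\frac{n-2s}{2}$ once the symmetrization used in Lemma \ref{remainder-1} is applied. The leftover pieces are of the type $\int\!\int x(x-t)|x-t|^{-\mu-2}w^pw^p$, which appear on the right of \eqref{1-idengity0}. The $\epsilon$-terms combine to $\kappa_s\epsilon\big(-\frac n2+\frac{n-2s}{2}\big)\int w^2=-s\kappa_s\epsilon\int_{\mathcal{M}(\Omega,r/2)}w^2dx$, which gives the left side of \eqref{1-idengity0}. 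This yields an identity analogous to \eqref{Qx}, with the additional boundary term $\frac{\epsilon}{2}\int_{\partial\mathcal{M}(\Omega,r/2)}w^2\langle x,\nu\rangle$.

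We then integrate in $r\in[\rho,2\rho]$ and take the minimum, as in \eqref{A1A22}. The coarea formula converts the averaged boundary terms on $\partial\mathcal{M}(\Omega,r/2)$ into volume integrals over $\mathcal{Q}(\Omega,2\rho)$ (for $\rho$ small), using $|x|\le C$. This produces $C\int_{\mathcal{Q}(\Omega,2\rho)}w^2$, which is allowed because $\epsilon$ is small, together with the Hartree term $\int_{\mathcal{Q}}(|x|^{-\mu}\ast w^p)w^p$. The averaged gradient terms over $\mathcal{E}_\rho$ are bounded through the Green representation $w=\int G_{\mathcal{C}}(\cdot,t)\,\partial_\nu^s w(t)\,dt$, exactly as in \eqref{pohozaev5}--\eqref{A1A21}, but now with density $h=(|x|^{-\mu}\ast w^p)w^{p-1}+\epsilon w$.

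The main obstacle is this last gradient estimate, because the density carries the extra piece $\epsilon w$. Splitting $h$ over $\mathcal{Q}(\Omega,2\rho)$ and $\mathcal{M}(\Omega,2\rho)$ produces, besides the Hartree norms already present, the quantities $\epsilon^2\|w\|_{L^q(\mathcal{Q}(\Omega,2\rho))}^2$ and $\epsilon^2\|w\|_{L^1(\mathcal{M}(\Omega,\rho/2))}^2$. I would treat them as follows. The interior $L^1$ piece is bounded by $\epsilon^2\|w\|_{L^1}\|w\|_{L^1}$, and then absorbed into the left side or controlled using the $L^1$ bound of Theorem \ref{prior-1} (Remark \ref{Rem1-2}). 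For the boundary-strip $L^q$ piece, the uniform bound \eqref{boundary-2} gives $\|w\|_{L^\infty(\mathcal{Q}(\Omega,2\rho))}\le C$, hence $\epsilon^2\|w\|^2_{L^q(\mathcal{Q})}\le C\epsilon^2\le$ a constant, which is dominated by the stated right side (adjusting $C$). If this absorption fails, the fallback is to keep these terms explicitly; since they are $O(\epsilon^2)$, they are harmless for the later application in Theorem \ref{emm-1}.
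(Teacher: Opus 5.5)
Your outline matches the paper's proof: the Pohozaev identity on $\mathcal{D}_r$ with the extra term from $\epsilon w$, the bookkeeping $-\frac n2+\frac{n-2s}{2}=-s$, averaging over $r\in[\rho,2\rho]$, coarea for the terms on $\partial\mathcal{M}(\Omega,r/2)$, and the Green representation for $\int_{\mathcal{E}_\rho}y^{1-2s}|\nabla w|^2$.

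The Hartree part is a minor point. The paper does not symmetrize. It notes that $\frac np-\frac{n-2s}{2}=\frac{\mu(n-2s)}{2(2n-\mu)}>0$ for $p=2^\ast_{\mu,s}$ and discards that nonnegative term, which leaves exactly $\frac{\mu}{p}\int_{\mathcal{M}(\Omega,r/2)}\int_\Omega x(x-t)\cdots$ on the right. Your symmetrization instead leaves a cross term over $\mathcal{M}(\Omega,r/2)\times\mathcal{Q}(\Omega,r/2)$ with kernel $x\cdot(x-t)|x-t|^{-\mu-2}$. That term does not appear in \eqref{1-idengity0}, so use positivity as the paper does.

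The genuine gap is how you handle the extra density $\epsilon w$ in the gradient estimate. You bound $\epsilon^2\|w\|^2_{L^q(\mathcal{Q}(\Omega,2\rho))}$ by a constant $C\epsilon^2$ using \eqref{boundary-2}, and claim it is dominated by the right side of \eqref{1-idengity0} after adjusting $C$. This cannot work. Every term on that right side is an integral of $w$ and can be arbitrarily small; in Lemma \ref{3-UV1} it is $O(\mu_\epsilon^{-2})\to0$. So no constant independent of $w$ and $\epsilon$ can absorb a fixed $C\epsilon^2$. The same objection applies to controlling the interior piece by the $L^1$ bound of Theorem \ref{prior-1}.

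Your fallback is not harmless either. With an extra $C\epsilon^2$, Lemma \ref{3-UV1} only gives $\epsilon\mu_\epsilon^{-4s/(n-2s)}\le C\mu_\epsilon^{-2}+C\epsilon^2$. This holds automatically once $\mu_\epsilon^{-4s/(n-2s)}\le C\epsilon$, so it yields no upper bound on $\mu_\epsilon$. The paper only says the rest is similar to Proposition \ref{prosition-identity} and does not treat this term explicitly, so you were right to flag it.

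Your absorption idea is the right one, but apply it to the whole $\epsilon w$ contribution, using an energy estimate instead of the $L^q$ kernel bound. Write $w=w_1+w_2$, with $w_1,w_2\in H^s_{0,L}(\mathcal{C})$ $s$-harmonic, $\partial_\nu^s w_1=(|x|^{-\mu}\ast w^p)w^{p-1}$ and $\partial_\nu^s w_2=\epsilon w(\cdot,0)$. Treat $w_1$ exactly as in Proposition \ref{prosition-identity}. For $w_2$, the energy identity together with the trace and Poincar\'e inequalities gives
\[
\int_{\mathcal{C}}y^{1-2s}|\nabla w_2|^2dz=\kappa_s\epsilon\int_\Omega w\,w_2(\cdot,0)\,dx\le C\epsilon^2\int_\Omega w^2dx\le C\epsilon^2\int_{\mathcal{M}(\Omega,\rho)}w^2dx+C\epsilon^2\int_{\mathcal{Q}(\Omega,2\rho)}w^2dx .
\]
Since $r\mapsto\int_{\mathcal{M}(\Omega,r/2)}w^2$ is nonincreasing, $\int_{\mathcal{M}(\Omega,\rho)}w^2=\min_{r\in[\rho,2\rho]}\int_{\mathcal{M}(\Omega,r/2)}w^2$. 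So the first term is absorbed into the left side $s\kappa_s\epsilon\min_r\int_{\mathcal{M}(\Omega,r/2)}w^2$ once $C\epsilon\le s\kappa_s/2$. The second term is already on the right of \eqref{1-idengity0}.
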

 \begin{proof}
We note that
 \begin{equation}\label{Qx-100}
 \epsilon\int_{\mathcal{M}(\Omega,r/2)\times\{0\}}\langle x,\nabla_{x}w\rangle wdx=-\frac{n}{2}\epsilon\int_{\mathcal{M}(\Omega,r/2)\times\{0\}}w^2dx+\frac{1}{2}\epsilon\int_{\partial\mathcal{M}(\Omega,r/2)\times\{0\}}w^2\langle x,\nu\rangle dS_{x}.
 \end{equation}
With the help of \eqref{Qx-100}
and similarly to the argument of \eqref{Qx}, we obtain a Pohozaev-type
identity
 \begin{equation*}
\begin{split}
&\kappa_{s}\Big(\frac{n}{p}-\frac{n-2s}{2}\Big)\int_{\mathcal{M}(\Omega,r/2)} \int_{\Omega}\frac{w^p(t,0)}{|x-t|^{\mu}}dtw^{p}(x,0)dx+sk_s\epsilon\int_{\mathcal{M}(\Omega,r/2)\times\{0\}}w^{2}dx\\&
=\int_{\partial\mathcal{D}_r^{+}}y^{1-2s}\Big\langle\langle z, \nabla w\rangle\nabla w-\frac{|\nabla w|^2}{2}z,\nu\Big\rangle dS_x+\frac{N-2s}{2}\int_{\partial\mathcal{D}_r^{+}}y^{1-2s}w \frac{\partial w}{\partial\nu}dS_x\\&
\hspace{3.5mm}+\frac{\mu}{p}\int_{\mathcal{M}(\Omega,r/2)} \int_{\Omega}x(x-t)\frac{w^p(t,0)}{|x-t|^{\mu+2}}dtw^{p}(x,0)dx
+\frac{1}{p}\int_{\partial\mathcal{M}(\Omega,r/2)} \int_{\Omega}\frac{w^p(t,0)}{|x-t|^{\mu}}dtw^{p}(x,0)\langle x,\nu\rangle dS_x\\&\hspace{3.5mm}+
\frac{1}{2}\epsilon\int_{\partial\mathcal{M}(\Omega,r/2)\times\{0\}}w^2\langle x,\nu\rangle dS_{x}.
\end{split}
\end{equation*}
Since
$
\Big(\frac{2n-\mu}{2\cdot2_{\mu,s}^{\ast}}-\frac{n-2s}{2}\Big)\leq\Big(\frac{n}{p}-\frac{N-2s}{2}\Big),
$
and hence the rest of the proof is similar to Proposition \ref{prosition-identity}.
 \end{proof}
 Using \eqref{1-idengity0}, we find the following.
 \begin{lem}\label{3-UV1}
Assume that $0<s<1$ and $n>4s$. There exists a constant $M>0$ such that for $\epsilon>0$ small, there holds
\begin{equation}\label{eq}
\mu_{\epsilon}\leq C\epsilon^{-\frac{n-2s}{2(n-4s)}}.
\end{equation}
  \end{lem}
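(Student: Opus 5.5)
The plan is to mirror the proof of Lemma \ref{thm:uniquenessofweaksolution}, now with Lemma \ref{2-UV1} in place of Proposition \ref{prosition-identity}. We take $p=2_{\mu,s}^{\ast}$ in \eqref{1-idengity0} and write $L_\epsilon\le R_\epsilon$ for its two sides. The new feature is that the left-hand side is now $s\epsilon\int_{\mathcal{M}(\Omega,r/2)}w_\epsilon^2(x,0)\,dx$, so we need a lower bound for the $L^2$ mass of $u_\epsilon$ in the interior. The right-hand side is the same Hartree boundary-layer quantity as before, plus the extra term $C\int_{\mathcal{Q}(\Omega,2\rho)}w_\epsilon^2$.

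\textbf{Preliminaries.} First we establish the analogues of Lemmas \ref{infinite}, \ref{finite} and \ref{cWU} for \eqref{CF1010} with the scaling $v_\epsilon(x)=\mu_\epsilon^{-1}u_\epsilon(\mu_\epsilon^{-2/(n-2s)}x+x_\epsilon)$:
\begin{itemize}
\item $\mu_\epsilon\to\infty$;
\item $v_\epsilon\to W=W[0,1]$ locally uniformly;
\item the decay bound $u_\epsilon\le C\,W[x_\epsilon,\mu_\epsilon^{2/(n-2s)}]$.
\end{itemize}
For the decay bound we run the Moser iteration on \eqref{1-CF101}. The extra potential $\epsilon\mu_\epsilon^{-4s/(n-2s)}|x|^{-4s}$ is harmless because $|x|\gtrsim\mu_\epsilon^{-2/(n-2s)}$ on $T(\Omega_\epsilon)$, so this potential is bounded by $C\epsilon$.

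\textbf{Lower bound for $L_\epsilon$.} Changing variables gives
\[
\int_{\mathcal{M}(\Omega,r/2)}u_\epsilon^2\,dx=\mu_\epsilon^{2-\frac{2n}{n-2s}}\int v_\epsilon^2=\mu_\epsilon^{-\frac{4s}{n-2s}}\int v_\epsilon^2 .
\]
Because $n>4s$, we have $W\in L^2(\mathbb{R}^n)$. Fatou's lemma on the rescaled domain (which contains $B(0,1)$ once $x_\epsilon$ stays away from $\partial\Omega$ by Theorem \ref{blow-up}) then gives $L_\epsilon\ge c\,\epsilon\,\mu_\epsilon^{-4s/(n-2s)}$.

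\textbf{Upper bound for $R_\epsilon$.} We estimate the terms one at a time, as in \eqref{right1}--\eqref{Right}, using the decay bound.
\begin{itemize}
\item In the set $\mathcal{Q}(\Omega,2\rho)$, away from the blow-up point, $u_\epsilon\le C\mu_\epsilon^{-1}$. So the boundary-layer terms, including $\int_{\mathcal{Q}}w_\epsilon^2$, are $O(\mu_\epsilon^{-2})$. The convolution factor $|x|^{-\mu}\ast u_\epsilon^{2_{\mu,s}^\ast}$ is uniformly bounded there by the HLS and Hölder estimates.
\item For the squared $L^1$ term over $\mathcal{M}(\Omega,\rho/2)$, Lemma \ref{p1-00} gives $\int(|x|^{-\mu}\ast W_\epsilon^{p})W_\epsilon^{p-1}\sim\int W_\epsilon^{2_s^\sharp-1}\sim\mu_\epsilon^{-1}$. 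Its square is therefore $O(\mu_\epsilon^{-2})$.
\item The term with kernel $x(x-t)|x-t|^{-\mu-2}$ is handled as in the paper by symmetrization in $(x,t)$ about the concentration point. After rescaling it vanishes to leading order, so its contribution is again $O(\mu_\epsilon^{-2})$.
\end{itemize}

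\textbf{Conclusion and main obstacle.} Combining the two bounds gives $\epsilon\mu_\epsilon^{-4s/(n-2s)}\le C\mu_\epsilon^{-2}$. Hence $\epsilon\le C\mu_\epsilon^{-2+4s/(n-2s)}=C\mu_\epsilon^{-\frac{2(n-4s)}{n-2s}}$, which is \eqref{eq}. I expect the main obstacle to be that last step: showing the interior Hartree term with the $x(x-t)$ kernel, together with the squared $L^1$ term, is genuinely $O(\mu_\epsilon^{-2})$ rather than $O(1)$. That requires centering the Pohozaev vector field at $x_\epsilon$ and using exact cancellation from the symmetry of $W$, with error terms controlled by the decay estimate. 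The decay estimate itself is the second delicate point, since the Moser iteration must be uniform in the presence of the linear term $\epsilon u$.
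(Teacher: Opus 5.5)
Your overall scheme matches the paper's: Lemma~\ref{2-UV1} with $p=2^{\ast}_{\mu,s}$, the lower bound $\epsilon\int_{\mathcal{M}(\Omega,r/2)}u_\epsilon^2\,dx\ge c\,\epsilon\,\mu_\epsilon^{-4s/(n-2s)}$, and everything else $O(\mu_\epsilon^{-2})$. Your exponent arithmetic, the boundary-layer bounds and the $\mu_\epsilon^{-1}$ size of the $L^1$ term are fine. The step that fails is the one you flagged.

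Write $\mathcal{M}=\mathcal{M}(\Omega,r/2)$. The term $\int_{\mathcal{M}}\int_{\Omega}\frac{x\cdot(x-t)}{|x-t|^{\mu+2}}u_\epsilon^{p}(t)u_\epsilon^{p}(x)\,dt\,dx$ in \eqref{1-idengity0} is \emph{not} small. Since $x\cdot(x-t)+t\cdot(t-x)=|x-t|^2$, its $\mathcal{M}\times\mathcal{M}$ part equals $\frac12\iint_{\mathcal{M}\times\mathcal{M}}|x-t|^{-\mu}u_\epsilon^p(x)u_\epsilon^p(t)\,dx\,dt$; this is the same identity the paper uses in Lemma~\ref{remainder-1}. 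At the critical exponent this Hartree energy is scale invariant and tends to $\frac12\iint|x-t|^{-\mu}W^p(x)W^p(t)\,dx\,dt>0$. Recentering at $x_\epsilon$ changes nothing, because $x_\epsilon\cdot(x-t)$ is antisymmetric, and no symmetry of $W$ can cancel a positive quantity. So with this term kept on the right, \eqref{1-idengity0} only gives $\epsilon\mu_\epsilon^{-4s/(n-2s)}\le C$. (The paper's one-line proof refers back to \eqref{Right}, where this term is asserted to vanish, so it glosses over the same point.)

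The missing idea is the criticality cancellation. Go back to the Pohozaev identity in the proof of Lemma~\ref{2-UV1}, where the same Hartree energy also appears on the left, with a coefficient proportional to $\frac np-\frac{n-2s}{2}$. Symmetrize the $\frac{\mu}{p}$-term on $\mathcal{M}\times\mathcal{M}$ and move it to the left. The net coefficient of $\iint_{\mathcal{M}\times\mathcal{M}}|x-t|^{-\mu}u_\epsilon^pu_\epsilon^p$ is then a multiple of $\frac{2n-\mu}{2p}-\frac{n-2s}{2}$, which is exactly $0$ for $p=2^{\ast}_{\mu,s}$. What survives are interaction terms over $\mathcal{M}\times(\Omega\setminus\mathcal{M})$, and the decay bound makes these $O(\mu_\epsilon^{-2n/(n-2s)})=o(\mu_\epsilon^{-2})$.

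There is a second gap, in your preliminaries. On $T(\Omega_\epsilon)$ one only has $|x|\gtrsim\mu_\epsilon^{-2/(n-2s)}$, so the extra potential obeys $\epsilon\mu_\epsilon^{-4s/(n-2s)}|x|^{-4s}\lesssim\epsilon\,\mu_\epsilon^{4s/(n-2s)}$, not $C\epsilon$. Only its scale-invariant $L^{n/(2s)}$ norm is $O(\epsilon)$. The $L^\infty$ part of Lemma~\ref{regular1} needs an $L^q$ bound with $q>\frac{n}{2s}$, and for $q=\frac{n}{2s}+\delta$ that norm is of order $\epsilon\,\mu_\epsilon^{c\delta}$ with $c>0$.

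The paper controls this in \eqref{timate} precisely by invoking the lemma you are proving. That is, it establishes Lemma~\ref{3-UV1} first and the decay estimate Lemma~\ref{1-UV1} afterwards. Your upper bound for $R_\epsilon$ rests entirely on the decay estimate. Reversing the order therefore requires an argument for that estimate, or at least a preliminary bound $\mu_\epsilon\le C\epsilon^{-K}$, that does not pass through the present lemma. As written, this step is unsupported. The paper's own proof also uses the decay tacitly through \eqref{Right}, so this dependency has to be broken explicitly in either ordering.
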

\begin{proof}
A direct calculation yields
\begin{equation}\label{bredecay-120}
\begin{split}
\epsilon\int_{\mathcal{M}(\Omega,r/2)\times\{0\}}w^{2}dx&=\epsilon\int_{\mathcal{M}(\Omega,r/2)\times\{0\}}\mu_{\epsilon}^2v_{\epsilon}^2(\mu_{\epsilon}^{\frac{2}{n-2s}}(x-x_{\epsilon}))dx\\&
\geq\epsilon\mu_{\epsilon}^{-\frac{4s}{n-2s}}\int_{B_{n}(0,1)}v_{\epsilon}^2(x)dx\geq C\epsilon\mu_{\epsilon}^{-\frac{4s}{n-2s}},
\end{split}
\end{equation}
and since $x_0\not\in\mathcal{Q}(\Omega,2\rho)$, we have
\begin{equation}\label{bredecay-121}
\int_{\mathcal{Q}(\Omega,2\rho)}w^2 dx\leq C(\frac{1}{\mu_{\epsilon}})^2.
\end{equation}
Consider the first term on the RHS of \eqref{1-idengity0}. Then analogous to the argument of \eqref{Right} and \eqref{bredecay-121}, we obtain
\begin{equation}\label{bredecay-122}
\mbox{RHS of}\hspace{2mm}\eqref{1-idengity0}\leq C\big(\frac{1}{\mu_{\epsilon}}\big)^{2}.
\end{equation}
Hence \eqref{1-idengity0}, \eqref{bredecay-120} and \eqref{bredecay-122} imply that \eqref{eq}.

\end{proof}
 The proof of the next results are similar to Lemma \ref{cWU} by applying Lemma \ref{3-UV1}.
\begin{lem}\label{1-UV1}
There exists a constant $C>0$ independently of $\epsilon>0$ provided $\epsilon$ is sufficiently small, such that
\begin{equation}\label{bredecay-123}
u_\varepsilon(x)\leq CW[\mu_{\epsilon}^{\frac{2}{n-2s}},x_{\epsilon}](x).
\end{equation}
\end{lem}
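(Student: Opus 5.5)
The plan is to follow the proof of Lemma \ref{cWU}, now for the Brezis--Nirenberg problem \eqref{ele-2}. First, \eqref{bredecay-123} is equivalent after rescaling to $v_\epsilon\le CW[0,1]$ on $\Omega_\epsilon$. Since $W[0,1](x)\sim|x|^{-(n-2s)}$ at infinity, this is in turn equivalent to a uniform bound $V_\epsilon\le B$ on $T(\Omega_\epsilon)$ for the Kelvin transform. On the set $\{|x|\ge r\}$ such a bound is automatic from $v_\epsilon\le\alpha_{n,\mu,s}$, so it suffices to prove
\[
\sup_{T(\Omega_\epsilon)\cap B_n(0,r)}V_\epsilon\le C
\]
for some fixed $r>0$ and all small $\epsilon$.

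To get this bound I would apply Lemma \ref{regular1} to $\widetilde W_\epsilon^\ast$, which solves \eqref{1-CF101}. I write the boundary condition there as $\partial_\nu^s\widetilde W_\epsilon^\ast=g\,\widetilde W_\epsilon^\ast$, where
\[
g=b(x)(\widetilde W_\epsilon^\ast)^{2_{\mu,s}^\ast-2}+\epsilon\mu_\epsilon^{-4s/(n-2s)}|x|^{-4s},\qquad b=|x|^{-\mu}\ast(\widetilde W_\epsilon^\ast)^{2_{\mu,s}^\ast}.
\]
Because the problem has no exponent perturbation, the convolution term is exactly Kelvin invariant, and no analogue of Lemma \ref{exists} is needed. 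The step proceeds in three parts.

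\emph{Step 1 ($L^{n/2s}$ bound on $g$).} Mimicking Lemma \ref{finite}, the functions $v_\epsilon$ converge to the bubble $W$ in $\dot H^s$ via the extension, using \eqref{1-26} and the classification in \cite{ple}. Then the Hardy--Littlewood--Sobolev and H\"older inequalities bound the Hartree part of $g$ in $L^{n/2s}$ by powers of $\|V_\epsilon\|_{L^{2_s^\sharp}}=\|v_\epsilon\|_{L^{2_s^\sharp}}$, as in \eqref{q1pie-1}. For the linear part,
\[
\big\|\epsilon\mu_\epsilon^{-4s/(n-2s)}|x|^{-4s}\big\|_{L^{n/2s}(T(\Omega_\epsilon))}^{n/2s}=\epsilon^{n/2s}\mu_\epsilon^{-2n/(n-2s)}\int_{|x|\ge c\mu_\epsilon^{-2/(n-2s)}}|x|^{-2n}dx\le C\epsilon^{n/2s}\mu_\epsilon^{-2n/(n-2s)}\mu_\epsilon^{2n/(n-2s)}=C\epsilon^{n/2s},
\]
where the lower bound on $|x|$ holds because $\Omega_\epsilon$ lies in a ball of radius $C\mu_\epsilon^{2/(n-2s)}$. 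This part is therefore uniformly bounded; in fact it is small.

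\emph{Step 2 (Moser iteration).} Run the iteration for $\widetilde W_\epsilon^\ast-V_1$ verbatim as in Lemma \ref{cWU}, with the gradient convergence \eqref{cO} in its Brezis--Nirenberg version. This gives a uniform higher integrability $\int_{B_n(0,\rho)}V_\epsilon^{(\beta_0+1)2_s^\sharp/2}\le C$ with $\beta_0>1$.

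\emph{Step 3 ($L^q$ bound on $g$ with $q>n/2s$).} This is the main obstacle. First show $\|b\|_{L^\infty}\le C_0$ by splitting the convolution near and away from $x$, as in \eqref{q-1}--\eqref{q1pie}; this uses $\mu<n$ and the higher integrability from Step 2. The Hartree part of $g$ is then bounded by $C_0V_\epsilon^{2_{\mu,s}^\ast-2}$, which lies in $L^q$ for some $q>n/2s$ because $\mu<4s$ makes $2_{\mu,s}^\ast-2<2_s^\sharp-2$.

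The linear part $\epsilon\mu_\epsilon^{-4s/(n-2s)}|x|^{-4s}$ is not in any $L^q$ with $q>n/2s$ uniformly, since its singularity at the origin is critical. Here Lemma \ref{3-UV1} is used. Choose $q$ slightly larger than $n/2s$, with $4sq<2n$, which is possible since $n>4s$. Integrating over $|x|\ge c\mu_\epsilon^{-2/(n-2s)}$ gives
\[
\big\|\epsilon\mu_\epsilon^{-4s/(n-2s)}|x|^{-4s}\big\|_{L^q}^q\le C\epsilon^q\mu_\epsilon^{-4sq/(n-2s)}\mu_\epsilon^{2(4sq-n)/(n-2s)}=C\big(\epsilon\mu_\epsilon^{4s/(n-2s)}\big)^q\mu_\epsilon^{-2n/(n-2s)}.
\]
Lemma \ref{3-UV1} gives $\epsilon\mu_\epsilon^{2(n-4s)/(n-2s)}\le C$. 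Since $n>4s$, this forces $\mu_\epsilon\to\infty$ as $\epsilon\to0$ and
\[
\epsilon\mu_\epsilon^{4s/(n-2s)}\le C\mu_\epsilon^{(8s-2n)/(n-2s)},
\]
so the right-hand side above tends to zero. If this exponent bookkeeping fails, the fallback is to absorb the term $\epsilon\mu_\epsilon^{-4s/(n-2s)}|x|^{-4s}\widetilde W_\epsilon^\ast$ by a Hardy-type trace inequality, using the smallness from Step 1, before iterating.

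With both hypotheses of Lemma \ref{regular1} verified, that lemma yields $\|V_\epsilon\|_{L^\infty(B_n(0,\rho/2))}\le C$. Undoing the Kelvin transform and the rescaling then gives \eqref{bredecay-123}.
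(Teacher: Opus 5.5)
Your proposal follows the paper's route: the paper's proof likewise reduces to Lemma \ref{cWU} applied to \eqref{1-CF101}, and its only new input is the bound \eqref{timate}, which controls the linear term $\epsilon\mu_\epsilon^{-4s/(n-2s)}|x|^{-4s}$ in $L^{\frac{n}{2s}+\delta}(T(\Omega_\epsilon))$ through Lemma \ref{3-UV1}; this is exactly your Step 3. Your exponent bookkeeping in Step 3 has slips that do not affect the conclusion: the condition $4sq<2n$ is incompatible with $q>n/2s$ (only $4sq>n$ is needed, and it holds automatically); Lemma \ref{3-UV1} is an upper bound on $\mu_\epsilon$, so it cannot force $\mu_\epsilon\to\infty$ (that comes from the analogue of Lemma \ref{infinite}); and the correct bound is $\epsilon\mu_\epsilon^{4s/(n-2s)}\le C\mu_\epsilon^{(12s-2n)/(n-2s)}$, not $(8s-2n)/(n-2s)$. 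It is simpler to observe that, with $q=\frac{n}{2s}+\delta$, the $q$-th power of the norm is at most $C\epsilon^{q}\mu_\epsilon^{4s\delta/(n-2s)}\le C\epsilon^{q-\frac{2s\delta}{n-4s}}\to0$, so no fallback argument is needed.
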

 \begin{proof}
 By Lemma \ref{3-UV1}, for any $\delta>0$ small, we estimate
 \begin{equation}\label{timate}
 \begin{split}
 \Big\|\epsilon\frac{1}{\mu_{\epsilon}^{4s/(n-2s)}|x|^{4s}}\Big\|_{L^{\frac{n}{2s}+\delta}(\mathcal{C}_{\epsilon}^{\ast})}
 &\leq \epsilon\Big(\int_{\{|x|\geq \frac{1}{c_0}\mu_{\epsilon}^{-\frac{2}{n-2s}}\}}\mu_{\epsilon}^{-\frac{4s}{n-2s}(\frac{n}{2s}+\delta)}|x|^{-(2n-4s\delta)}dx\Big)^{\frac{1}{n/(2s)+\delta}}
 \\&
 \leq \epsilon\mu_{\frac{8s^2\delta}{(n-2s)(n+\delta)}}\leq \epsilon\cdot\epsilon^{-\frac{n-2s}{2(n-4s)}\cdot\frac{8s^2\delta}{(n-2s)(n+\delta)}}\leq1.
 \end{split}
 \end{equation}
Estimate \eqref{bredecay-123} can be proven by a similar argument in Lemma \ref{cWU} and together with \eqref{timate}.
 \end{proof}

We can now prove Theorem \ref{emm}.
 \begin{proof}[Proof of Theorem \ref{emm}]
 Based on Lemma \ref{1-UV1}, the conclusion follows as in proof of Theorems \ref{prondgr}-\ref{consequence}.
 \end{proof}

We conclude the section by proving Theorem \ref{emm-1}.
\begin{proof}[Proof of Theorem \ref{emm-1}]
(a) The part (1) follows by a similar argument as in the proof Theorem \ref{prondgr-1}.

For part (b), by using a calculation similar to \eqref{remainder-2}, we derive
 \begin{equation*}
\begin{split}
\lim\limits_{\epsilon\rightarrow0}&\epsilon s\kappa_{s}\alpha^2_{n,\mu,s}\mu_{\epsilon}^{\frac{2(n-4s)}{n-2s}}\delta\int_{\mathbb{R}^n}W^{2}(x)dx\\&
=d_{n,s}^2\int_{\delta}^{2\delta}\int_{\partial B_r^{+}}\bigg[y^{1-2s}\Big\langle\langle z-X_0, \nabla G\rangle\nabla G-\frac{|\nabla G|^2}{2}(z-X_0),\nu\Big\rangle+\frac{n-2s}{2}y^{1-2s}G \frac{\partial G}{\partial\nu}\bigg]dS_xdr.
\end{split}
\end{equation*}
Then similar to argument of Theorem \ref{remainder terms}, we have
\begin{equation*}
\begin{split}
\lim\limits_{\epsilon\rightarrow0}&\epsilon\mu_{\epsilon}^{\frac{2(n-4s)}{n-2s}}\int_{\mathbb{R}^n}W^{2}(x)dx
\rightarrow\frac{(n-2s)^2\gamma_{n,s}d_{n,s}^2}{2sk_{s}}M_{n,s}\phi(x_0)
\end{split}
\end{equation*}
as $\delta\rightarrow0$, and concluding the proof.
 \end{proof}




\end{document}